\documentclass[12pt,a4paper]{amsart}

\usepackage{amssymb}
\usepackage{amsthm}
\usepackage{amsmath}
\usepackage{graphicx}
\usepackage{enumerate}
\usepackage{xcolor}
\usepackage{tikz}
\usepackage{graphbox}
\usepackage{orcidlink}
\usepackage{mathrsfs}  
\usepackage{hyperref}
\hypersetup{colorlinks=true,allcolors=blue}
\usepackage[margin=2.5cm]{geometry}

\newtheorem{theorem}{Theorem}[section]
\newtheorem{proposition}[theorem]{Proposition}

\newtheorem{lemma}[theorem]{Lemma}

\theoremstyle{definition}
\newtheorem{example}[theorem]{Example}
\newtheorem{definition}[theorem]{Definition}
\newtheorem{remark}[theorem]{Remark}

\newcommand{\II}{\mathbb{I}}
\newcommand{\RR}{\mathbb{R}}

\newcommand{\diag}{\mathrm{diag}}

\newcommand{\uD}{\underline{\Delta}}

\title[Diagonal sections of Markov products]{On diagonal sections of Markov products of copulas}

\author[R. Drnovšek]{Roman Drnovšek \orcidlink{0000-0003-3342-7417} }
\address{University of Ljubljana, Faculty of Mathematics and Physics, and Institute of Mathematics, Physics and Mechanics, Ljubljana, Slovenia}
\email{Roman.Drnovsek@fmf.uni-lj.si}

\author[D. Kokol Bukovšek]{Damjana Kokol Bukovšek \orcidlink{0000-0002-0098-6784} }
\address{University of Ljubljana, School of Economics and Business, and Institute of Mathematics, Physics and Mechanics, Ljubljana, Slovenia}
\email{Damjana.Kokol.Bukovsek@ef.uni-lj.si}

\author[N. Stopar]{Nik Stopar \orcidlink{0000-0002-0004-4957} } 
\address{University of Ljubljana, Faculty of Civil and Geodetic Engineering, University of Ljubljana, Faculty of Mathematics and Physics, and Institute of Mathematics, Physics and Mechanics, Ljubljana, Slovenia}
\email{Nik.Stopar@fgg.uni-lj.si}

\keywords{Copula; Markov product; diagonal section; completely positive matrix}
\subjclass[2020]{62H05, 60E05}

\begin{document}

\begin{abstract}
Given a bivariate copula $C$, the bivariate copula $\Psi(C)$ is defined as the Markov product of $C^T$ and $C$. In this paper we consider the diagonal section of $\Psi(C)$ under the additional assumption that its value at a single point is known. We determine the exact upper and lower bounds for the diagonal section of 
$\Psi(C)$, given that $\Psi(C)(a,a)=d \in [a^2,a]$, where $a \in [0,1]$ is a fixed point.
Furthermore, we describe some additional properties of the transformation $\Psi$, such as its behavior under ordinal sums and the survival operation.
We also study checkerboard copulas in the range of $\Psi$ and matrices associated to them.
\end{abstract}

\maketitle

\section{Introduction}

Dependence of continuous random variables is typically modeled with copulas due to Sklar's theorem \cite{Skl59}. Copulas enable the modeling of various types of dependence structures and there exist various well-known parametric families of copulas that are tailored for specific applications \cite{GeNeZi11,GuSe12,JaHuMi17,JoLiNi10} (see also \cite{AnRo24} for a collection of dependence properties of different copula families).
In certain situations it is desirable to quantify the degree of dependence with a single number, which lead to the development of various measures of association, such as Pearson's correlation coefficient, measures of concordance \cite{DoUbFl06,FuSc21,GeNeGh10,JuGrTr21,Sc84,ScPaTr17,UbFl05,Wy15}, measures of dependence \cite{Ch21,DeSiSt13,Li14,Re59,DaEA11,ScWo81}, coefficients of tail dependence \cite{DuFePa15,HuJo14,LeTa96,ScSc07}, etc.

Classical measures of association are typically symmetric, i.e., the measure remains unchanged if we permute the random variables. Since, in practice, one random variable may have a stronger influence on another than vice versa, non-symmetric measures have recently been introduced. Such measures can measure the degree of functional dependence of a single variable $Y$ on a set of explanatory variables $X_1,X_2,\ldots,X_d$.
One such measure is the Chatterjee's rank correlation coefficient $\xi$ introduced in \cite{DeSiSt13,Ch21,AzCh21}. In the bivariate setting, the population version of the coefficient is defined as
$$\xi(Y,X)=\frac{\int_{-\infty}^\infty \operatorname{\mathbb{V}ar}(\mathbb{P}(Y \ge y \mid X)) d\mathbb{P}^Y(y)}{\int_{-\infty}^\infty \operatorname{\mathbb{V}ar}(\mathbf{1}_{\{Y \ge y\}}) d\mathbb{P}^Y(y)}.$$
Chatterjee's $\xi$ has the property that $\xi(Y,X)=0$ if and only if $Y$ is independent of $X$, and $\xi(Y,X)=1$ if and only if there exists a measurable function $f$ such that $Y=f(X)$ almost surely.
The multivariate version of Chatterjee's coefficient can be found in \cite{AzCh21}.

For a continuous random vector $(X,Y)$ the coefficient $\xi(Y,X)$ is uniquely determined by the copula $C$ of $(X,Y)$ and is expressed as
$$\xi(Y,X)=\xi(C)=6\int_0^1\int_0^1 \big(\partial_1 C(u,v)\big)^2 dudv-2.$$
Here, $\partial_1 C(u,v)$ denotes the partial derivative of $C$ with respect to the first variable, which exists for all $v \in [0,1]$ almost surely with respect to $u \in [0,1]$.
Alternatively, $\xi(C)$ can be expressed as (see \cite[Theorem~4]{Fu24})
\begin{equation}\label{eq:main}
    \xi(C)=\phi(\Psi(C)),
\end{equation}
where
$$\phi(D)=6\int_0^1 D(u,u) du-2$$
is the Spearman's footrule of copula $D$ and $\Psi(C)=C^T *C$ is the Markov product of $C^T$ and $C$, see Section~\ref{sec:prelim}.
The transformation $\Psi$ above is a special case (with $d=1$) of a more general dimension reduction principle, which transforms a $(d+1)$-dimensional copula to a bivariate copula, see \cite{Fu24}.
This transformation is interesting since it can be used to translate certain dependence concepts from the multivariate setting to the bivariate setting, see \cite{FuLi26}.
For example, suppose that $(d+1)$-dimensional copula $C$ is the distribution function of a random vector $(U_1,U_2,\ldots,U_d,V)$ and $\Psi(C)$ is the distribution function of $(V,W)$, where $U_i$, $V$ and $W$ are uniformly distributed on $[0,1]$.
Then $V$ perfectly depends on $(U_1,U_2,\ldots,U_d)$ if and only if $V=W$ almost surely, see \cite[Theorem~3.1]{FuLi26}.

In this paper we study the range of transformation $\Psi$ in the bivariate case, i.e., the set $\{\Psi(C) \mid C \text{ a bivariate copula}\}$. Because of the formula in equation \eqref{eq:main} the diagonal sections of copulas in the range of $\Psi$ are of particular interest.
Recall that the family of diagonal sections of all bivariate copulas is easily characterized as the set of all increasing, $2$-Lipschitz functions $\delta \colon [0,1] \to [0,1]$ that satisfy $\delta(0)=0$, $\delta(1)=1$, and $\delta(u) \le u$ for all $u \in [0,1]$, see \cite{FrNe97}.
The diagonal sections of copulas in the range of $\Psi$ are much more restricted.
Their study was initiated in \cite{Fu24}, where the author showed that $\Psi(C)(u,u) \ge u^2$ for all $u \in [0,1]$, so the diagonal section of $\Psi(C)$ always dominates the diagonal section of the product copula $\Pi$ (although $\Psi(C)$ need not dominate $\Pi$ globally).

We focus our attention on the diagonal section of $\Psi(C)$ under the additional assumption that its value at a single point is known.
We determine the exact upper and lower bound for the diagonal section of $\Psi(C)$, given that $\Psi(C)(a,a)=d \in [a^2,a]$, where $a$ is a fixed point in $[0,1]$.
Furthermore, we establish some additional properties of the transformation $\Psi$, such as its behavior under ordinal sums and survival operation.
Since we use discretization methods in our proofs, we also consider checkerboard copulas in the range of $\Psi$ and matrices associated to them, namely, the value matrix, the mass matrix, and the density matrix.

The paper is structured as follows. In Section~\ref{sec:prelim} we recall the notions of Markov product, ordinal sum of copulas, checkerboard copulas, and completely positive matrices.
Section~\ref{sec:matrices} is devoted to matrices associated to checkerboard copulas. Some properties of the transformation $\Psi$, along with the properties of matrices associated to checkerboard copulas in the range of $\Psi$, are given in Section~\ref{sec:properties}.
In Sections~\ref{sec:lower} and \ref{sec:upper} we determine the exact lower and upper bound for the diagonal section of a copula in the range of transformation $\Psi$ under the condition that its value at a single point is known.
We illustrate our results with a couple of examples and end the paper with some concluding remarks.

\section{Preliminaries} \label{sec:prelim}

Let $\II$ be the unit interval $[0,1]\subseteq\RR$ and $u_1, u_2, v_1, v_2 \in \II$ be such that $u_1 \le u_2$ and $v_1 \le v_2$. The Cartesian product of intervals $R=[u_1,u_2]\times [v_1,v_2]$ is called a {\it rectangle} in $\II^2$. Let $H:\II^2 \to \RR$ be a real function. We define the $H$-volume of $R$ as
$$V_H(R)=H(u_2,v_2)-H(u_2,v_1)-H(u_1,v_2)+H(u_1,v_1).$$
A {\it bivariate copula} is a function $C:\II^2 \to \II$ with the following properties:
\begin{enumerate}[(i)]
    \item $C(0,v)=C(u,0)=0$ for all $u,v \in \II$ ($C$ is {\it grounded}),
    \item $C(u,1)=u$ and $C(1,v)=v$ for all $u,v \in \II$ ($C$ has {\it uniform marginals}), and
    \item $V_C(R) \geq 0$ for every rectangle $R \subseteq \II^2$ ($C$ is {\it $2-$increasing}).
\end{enumerate}

Recall that the lower and upper bound for bivariate copulas are the Fr\'{e}chet-Hoeffding bounds
$$W(u,v)=\max \{0,u+v-1\} \qquad \text{and} \qquad M(u,v)=\min \{u,v \}.$$
The survival copula $\widehat{C}$ of a copula $C$ is defined as
\begin{equation}\label{eq:Chat}
\begin{aligned}
    \widehat{C}(u, v) &= C(1,1)- C(1-u,1)-C(1,1-v) + C(1-u, 1-v) = \\
    &= 1 - (1-u)-(1-v) + C(1-u, 1-v) \\
    &= u+v-1 + C(1-u, 1-v).
\end{aligned}
\end{equation}

Given two bivariate copulas $C$ and $D$, their \emph{Markov product} (also called \emph{star product}), defined in \cite{DarNguOls92}, is given for all $(u,v) \in \II^2$ as
\begin{equation}\label{eq:markov}
    (C \ast D)(u,v) = \int_0^1 \partial_2 C(u,t) \partial_1 D(t,v) dt.
\end{equation}
This is well defined, since copulas are Lipschitz continuous, hence, their partial derivatives exist almost everywhere. Furthermore, $C \ast D$ is again a copula, see \cite[Theorem~2.1]{DarNguOls92}. In \cite{Fu24} the author introduces a transformation $\Psi$ on the set of copulas defined as
$\Psi(C)=C^T \ast C$, or equivalently,
\begin{equation}\label{eq:psi}
    \Psi(C)(u,v) = \int_0^1 \partial_1 C(t,u) \partial_1 C(t,v) dt
\end{equation}
for all $(u,v) \in \II^2$. Note that $\Psi(C)$ is a symmetric copula, i.e., $\Psi(C)(u,v)=\Psi(C)(v,u)$.

For a finite or countably infinite family of disjoint nonempty intervals $\{J_i=(a_i,b_i)\}_{i \in I}$ and a family of copulas $\{C_i\}_{i \in I}$, the \emph{ordinal sum} $C$ of copulas $\{C_i\}_{i \in I}$ with respect to $\{J_i\}_{i \in I}$ is given by
\begin{equation}\label{eq:ordinal}
    C(u,v)=\begin{cases}
        a_i+(b_i-a_i)C_i\big(\frac{u-a_i}{b_i-a_i},\frac{v-a_i}{b_i-a_i}\big), & u,v \in (a_i,b_i) \text{ for some } i \in I,\\
        M(u,v), & \text{otherwise}.
    \end{cases}
\end{equation}
The ordinal sum is a copula, whose mass distribution inside the square $[a_i,b_i] \times [a_i,b_i]$ is a rescaled version of the mass distribution of copula $C_i$, and the mass distribution outside the union of squares $\{[a_i,b_i] \times [a_i,b_i] \mid i \in I\}$ coincides with the mass distribution of copula $M$.

For any positive integer $n$, the set $\{1, 2, \ldots, n\}$ is abbreviated by $[n]$. For positive integers $m, n$ and splitting points $0 = a_0 < a_1 < a_2 < \ldots < a_{m-1} < a_m = 1$ and $0 = b_0 < b_1 < b_2 < \ldots < b_{n-1} < b_n = 1$, we define the sets 
\begin{align*}
\delta_1 &:= \{a_0, a_1, a_2, \ldots, a_{m-1}, a_m\}  \subseteq \II;\\
\delta_2 &:= \{b_0, b_1, b_2, \ldots, b_{n-1}, b_n\}  \subseteq \II;\\
\Delta &:= \delta_1 \times \delta_2 \subseteq \II^2.
\end{align*}
We will refer to sets of the type $\Delta$ as \emph{discrete meshes}. If $\delta_1 = \delta_2$, we will denote the mesh $\delta_2 \times \delta_2$ by $\uD$ and call it a \emph{symmetric mesh}.

\begin{definition}
A function  $\widetilde{C}\colon\Delta\to\II$ is called a \emph{discrete copula}, if
\begin{enumerate}[(i)]
    \item $\widetilde{C}(a_i, 0)=\widetilde{C}(0, b_j)=0$ for all $i \in [m], j \in [n]$ ($\widetilde{C}$ is \emph{grounded}),
    \item $\widetilde{C}(a_i, 1)=a_i$ and $\widetilde{C}(1, b_j)=b_j$ for all $i \in [m], j \in [n]$ ($\widetilde{C}$ has \emph{uniform marginals}), and
    \item  $V_{\widetilde{C}}([a_{i-1},a_i]\times[b_{j-1},b_j])  \ge 0$ ($\widetilde{C}$ is \emph{$2$-increasing}).
\end{enumerate}
\end{definition}

Any discrete copula $\widetilde{C}$ can be extended to a copula $A$ by using bilinear functions. This (piecewise) bilinear extension is often referred to as a \emph{checkerboard copula} or \emph{checkerboard extension}, see \cite{MikTay10,LinWanZhaZha25}. For any $(u,v) \in \II^2$ there exist $i \in [m], j \in [n]$ such that 
$(u,v) \in [a_{i-1},a_i]\times[b_{j-1},b_j]$. Let 
$t = \tfrac{u-a_{i-1}}{a_i-a_{i-1}} \in [0,1]$, and $s = \tfrac{v-b_{j-1}}{b_j-b_{j-1}} \in [0,1]$. The value of copula $A$ at the point $(u,v)$ is then given as
\begin{equation}\label{eq:CB}
    A(u,v) = (1-t)(1-s)\widetilde{C}(a_{i-1},b_{j-1}) + t(1-s)\widetilde{C}(a_i,b_{j-1}) + (1-t)s\widetilde{C}(a_{i-1},b_j) +ts\widetilde{C}(a_i,b_j).
\end{equation}
If a checkerboard copula $A$ is obtained from a discrete copula $\widetilde{C}$ defined on a mesh $\Delta$, we will refer to it as a 
\emph{checkerboard copula with splitting points $\Delta$}. We will typically denote checkerboard copulas by $A$, $B$, $A_1,\ldots$, and general copulas by $C$, $D$, $C_1,\ldots$

We recall also a notion from linear algebra. We denote the vector space of real $n \times m$ matrices by $\RR^{n \times m}$ and the transpose of matrix $M$ by $M^T$. A square real matrix $M \in \RR^{n \times n}$ is called \emph{completely positive}, if there exists an entry-wise nonnegative matrix $N \in \RR^{m\times n}$, such that $M=N^TN$. It is obvious, that every completely positive matrix is entry-wise nonnegative and positive semidefinite. Conversely, 
every $2 \times 2$, $3 \times 3$ and $4 \times 4$ matrix which is  entry-wise nonnegative and positive semidefinite is also completely positive, but there exists a $5 \times 5$ entry-wise nonnegative and positive semidefinite matrix which is not completely positive, see \cite{BeShMo03}.

\section{Matrices associated to checkerboard copulas}\label{sec:matrices}

Let $A$ be a checkerboard copula with splitting points $\Delta$. Since $A$ is bilinear on every rectangle $[a_{i-1},a_i]\times[b_{j-1},b_j]$, its density $\partial_1\partial_2A$ exists for any $(u,v) \in (a_{i-1},a_i)\times(b_{j-1},b_j)$ and it is constant there. We will assign to copula $A$ three matrices in $\RR^{m\times n}$, the value matrix $S_A$, the mass matrix $M_A$, and the density matrix $G_A$, defined as follows.  

\begin{definition}
Let $A$ be a checkerboard copula with splitting points $\Delta$. 
\begin{enumerate}[(i)]
    \item The \emph{value matrix} $S_A = [s_{ij}]_{ij} \in \RR^{m\times n}$ is the matrix with entries $$s_{ij} = A(a_i, b_j).$$
    \item The \emph{mass matrix} $M_A = [m_{ij}]_{ij} \in \RR^{m\times n}$ is the matrix with entries $$m_{ij} = V_A([a_{i-1},a_i]\times[b_{j-1},b_j]).$$
    \item The \emph{density matrix} $G_A = [g_{ij}]_{ij} \in \RR^{m\times n}$ is the matrix with entries $$g_{ij} = \partial_1\partial_2A(\tfrac{a_{i-1}+a_i}{2},\tfrac{b_{j-1}+b_j}{2}).$$
\end{enumerate}
\end{definition}

Clearly, the matrix $S_A$ uniquely determines the splitting points $\Delta$ and the checkerboard copula $A$. The same holds for matrices $M_A$ and $G_A$, see Proposition~\ref{prop:connection} below.

Note that, given a fixed $\Delta$, the value matrix and the mass matrix could be defined for any copula $C$, but the density matrix is defined only for checkerboard copulas. These matrices are closely related. In order to state the relationships between them, we define a few more matrices.
Let $J_n \in \RR^{n\times n}$ be the matrix with $1$'s on the diagonal, $-1$'s on the first superdiagonal and $0$'s elsewhere, and $P_n$ the matrix with $1$'s on the counter-diagonal and $0$'s elsewhere, i.e., 
$$J_n = \begin{bmatrix}
    1 & -1 & 0 & \cdots & 0 & 0 \\
    0 & 1 & -1 & \cdots & 0 & 0 \\
    0 & 0 & 1 & \ddots &  & 0 \\
    \vdots & \vdots & \vdots& \ddots& \ddots& \vdots \\
    0 & 0 & 0 & \cdots & 1 & -1 \\
    0 & 0 & 0 & \cdots & 0 & 1 
\end{bmatrix}, \quad P_n = \begin{bmatrix}
    0 & 0 &  \cdots & 0 & 1 \\
    0 & 0  & \cdots & 1 & 0 \\
    \vdots & \vdots & & \vdots& \vdots \\
    0 & 1 & \cdots & 0 & 0 \\
    1 & 0 & \cdots & 0 & 0 
\end{bmatrix}.$$
By $\mathbf{v}_{\delta_1}\in \RR^{m}$  and $\mathbf{v}_{\delta_2}\in \RR^{n}$ we denote the vectors
\begin{align*}
   \mathbf{v}_{\delta_1} &= (a_1-a_0, a_2-a_1, ..., a_m-a_{m-1})^T, \\
   \mathbf{v}_{\delta_2} &= (b_1-b_0, b_2-b_1, ..., b_n-b_{n-1})^T,
\end{align*}
and by $\mathbf{1}_m\in \RR^{m}$  and $\mathbf{1}_n\in \RR^{n}$ the vectors of all $1$'s.
Furthermore, by $D_{\delta_1}\in \RR^{m\times m}$  and $D_{\delta_2}\in \RR^{n\times n}$  we denote the diagonal matrices
\begin{align*}
   D_{\delta_1}= \diag(\mathbf{v}_{\delta_1}) \quad \text{and} \quad
   D_{\delta_2}= \diag(\mathbf{v}_{\delta_2}).
\end{align*}

In next proposition we give the connection between the value matrix, the mass matrix and the density matrix of a checkerboard copula. Part of this connection has already been given in \cite{KoPe25}.

\begin{proposition}\label{prop:connection}
    Let $A$ be a checkerboard copula with splitting points $\Delta$. Then
$$ M_A = J_m^T S_A J_n = D_{\delta_1} G_A D_{\delta_2}. $$
\end{proposition}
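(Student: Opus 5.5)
We prove the two equalities separately by computing the $(i,j)$ entry of each side.

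For $M_A = J_m^T S_A J_n$, we use the convention $s_{0j}=s_{i0}=0$, which holds because $A$ is grounded: $A(a_0,b_j)=A(a_i,b_0)=0$. By the definition of the $A$-volume,
$$m_{ij} = A(a_i,b_j)-A(a_{i-1},b_j)-A(a_i,b_{j-1})+A(a_{i-1},b_{j-1}) = s_{ij}-s_{i-1,j}-s_{i,j-1}+s_{i-1,j-1}.$$
Now we identify the matrix products. Right multiplication by $J_n$ sends column $j$ of $S_A$ to column $j$ minus column $j-1$, with column $1$ unchanged, since $J_n$ has $1$ at $(j,j)$ and $-1$ at $(j-1,j)$. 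Similarly, left multiplication by $J_m^T$ sends row $i$ to row $i$ minus row $i-1$, with row $1$ unchanged. Hence $(J_m^T S_A J_n)_{ij}$ equals the second difference above, with the missing terms for $i=1$ or $j=1$ corresponding exactly to the vanishing boundary values $s_{0j}$ and $s_{i0}$. This index bookkeeping is routine, and it is the only place where care is needed, namely at the boundary indices.

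For $M_A = D_{\delta_1} G_A D_{\delta_2}$, note that $A$ is bilinear on $R_{ij}=[a_{i-1},a_i]\times[b_{j-1},b_j]$ by \eqref{eq:CB}, so it has the form $\alpha+\beta u+\gamma v+\eta uv$ there. Its mixed partial $\partial_1\partial_2 A$ is therefore the constant $\eta$ on the interior of $R_{ij}$, and $g_{ij}=\eta$. The volume of $R_{ij}$ under a bilinear function depends only on the $uv$ term, so
$$m_{ij}=V_A(R_{ij})=\eta\,(a_i-a_{i-1})(b_j-b_{j-1}).$$
Equivalently, $V_A(R_{ij})=\int_{R_{ij}}\partial_1\partial_2A$, since $A$ is smooth on $R_{ij}$ and continuous. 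Alternatively, plugging the corner values into \eqref{eq:CB} gives $\eta = m_{ij}/\big((a_i-a_{i-1})(b_j-b_{j-1})\big)$ directly. Since $(D_{\delta_1}G_AD_{\delta_2})_{ij}=(a_i-a_{i-1})\,g_{ij}\,(b_j-b_{j-1})$, the second equality follows.

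Finally, the two equalities imply the uniqueness claim preceding the proposition. $J_m$, $J_n$ and the diagonal matrices are invertible, since the splitting points are strictly increasing. The splitting points themselves are recovered from the row and column sums of $M_A$: these equal $a_i-a_{i-1}$ and $b_j-b_{j-1}$ by the uniform marginals.
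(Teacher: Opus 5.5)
Your proof is correct and follows the paper's. The first identity is the second-difference formula for $m_{ij}$ with the grounding convention $s_{0j}=s_{i0}=0$, and the second is the constant-density computation on each rectangle $[a_{i-1},a_i]\times[b_{j-1},b_j]$; your bilinear-volume calculation is just the explicit form of the paper's integral of $\partial_1\partial_2A$.

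One caution about your closing uniqueness aside, which goes beyond the statement. It recovers $\Delta$ and $A$ from $M_A$, but from $G_A$ only when the mesh is given, and that restriction cannot be removed. For instance, the matrix with rows $(2,0,1)$, $(0,2,1)$, $(1,1,1)$ is the density matrix of checkerboard copulas with splitting points $\{0,\tfrac{2}{5},\tfrac{4}{5},1\}^2$ and $\{0,\tfrac{3}{10},\tfrac{3}{5},1\}^2$, and their values at $(\tfrac{2}{5},\tfrac{2}{5})$ are $\tfrac{8}{25}$ and $\tfrac{1}{5}$ respectively.
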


\begin{proof}
    For every $i\in [m], j \in [n]$ we have
\begin{align*}
  m_{ij} &= V_A([a_{i-1},a_i]\times[b_{j-1},b_j]) \\
  &= A(a_i,b_j)-A(a_i,b_{j-1})-A(a_{i-1},b_j)+A(a_{i-1},b_{j-1}) \\
  &= s_{ij}-s_{i,j-1}-s_{i-1,j}+s_{i-1,j-1}, 
\end{align*}
where we adopt the convention $s_{0j}=s_{i0}=0$. It follows that $M_A = J_m^T S_A J_n$. Furthermore, since copula $A$ is absolutely continuous, 
\begin{align*}
  m_{ij} &= V_A([a_{i-1},a_i]\times[b_{j-1},b_j]) = \int_{a_{i-1}}^{a_i} \int_{b_{j-1}}^{b_j} \partial_1\partial_2A(u,v)\,du\,dv\\
  &= \int_{a_{i-1}}^{a_i} \int_{b_{j-1}}^{b_j} g_{ij}\,du\,dv = (a_i-a_{i-1})(b_j-b_{j-1})g_{ij}, 
\end{align*}
thus $M_A = D_{\delta_1} G_A D_{\delta_2}$.
\end{proof}

The following lemma relates the matrices of checkerboard copula $A$ and its survival copula $\widehat{A}$.

\begin{lemma}\label{lem:checkerboard_hat}
    If $A$ is a checkerboard copula with splitting points $\Delta$, then the survival copula $\widehat{A}$ is also a checkerboard copula with splitting points $\Delta' = \delta'_1 \times \delta'_2$, where
\begin{align*}
\delta'_1 &:= \{1-a_m, 1-a_{m-1}, \ldots, 1-a_1, 1-a_0\}  \subseteq \II,\\
\delta'_2 &:= \{1-b_n, 1-b_{n-1}, \ldots, 1-b_1, 1-b_0\}  \subseteq \II,
\end{align*}
and 
 $$ S_{\widehat{A}} = J_m^{-T} P_m J_m^T S_A J_n P_n J_n^{-1}, \qquad M_{\widehat{A}} = P_m M_A P_n, \qquad G_{\widehat{A}} = P_m G_A P_n. $$
\end{lemma}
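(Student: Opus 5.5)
Since $\widehat{A}(u,v)=u+v-1+A(1-u,1-v)$, on each rectangle $[1-a_i,1-a_{i-1}]\times[1-b_j,1-b_{j-1}]$ the function $\widehat{A}$ is the sum of the affine function $u+v-1$ and the composition of the bilinear function $A$ restricted to $[a_{i-1},a_i]\times[b_{j-1},b_j]$ with the affine map $(u,v)\mapsto(1-u,1-v)$. It is therefore bilinear there. A copula that is bilinear on every cell of the mesh $\Delta'$ agrees with the bilinear extension of its restriction to $\Delta'$. Its restriction is a discrete copula, because $\widehat{A}$ is a copula. Hence $\widehat{A}$ is the checkerboard copula with splitting points $\Delta'$. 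I would first fix the indexing of $\Delta'$: set $a'_k=1-a_{m-k}$ and $b'_l=1-b_{n-l}$. The $k$-th cell of $\delta'_1$ is then $[1-a_{m-k+1},1-a_{m-k}]$, which is the reflection of the $(m-k+1)$-th cell of $\delta_1$, and similarly for $\delta'_2$.

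For the mass matrix, the survival copula is the distribution of $(1-U,1-V)$, where $(U,V)\sim A$. Concretely, a direct computation from \eqref{eq:Chat} gives $V_{\widehat{A}}([u_1,u_2]\times[v_1,v_2])=V_A([1-u_2,1-u_1]\times[1-v_2,1-v_1])$, since the affine part has zero volume. Applying this to the cell $(k,l)$ of $\Delta'$ gives $m'_{kl}=m_{m-k+1,\,n-l+1}$, which is exactly the $(k,l)$ entry of $P_mM_AP_n$.

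For the density matrix, I would note that $D_{\delta'_1}=P_mD_{\delta_1}P_m$, because the lengths of the cells are reversed, and likewise $D_{\delta'_2}=P_nD_{\delta_2}P_n$. Proposition~\ref{prop:connection} applied to $\widehat{A}$ gives $M_{\widehat{A}}=D_{\delta'_1}G_{\widehat{A}}D_{\delta'_2}$. Combining this with $M_{\widehat{A}}=P_mM_AP_n=P_mD_{\delta_1}G_AD_{\delta_2}P_n$ and using $P^2=I$ yields
\[
P_mD_{\delta_1}P_m\,G_{\widehat{A}}\,P_nD_{\delta_2}P_n=P_mD_{\delta_1}G_AD_{\delta_2}P_n .
\]
The diagonal matrices are invertible because all cell lengths are positive, so cancelling them leaves $P_mG_{\widehat{A}}P_n=G_A$, that is, $G_{\widehat{A}}=P_mG_AP_n$.

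For the value matrix, Proposition~\ref{prop:connection} gives $M_{\widehat{A}}=J_m^TS_{\widehat{A}}J_n$, and $J_m$, $J_n$ are invertible (upper unitriangular). Therefore
\[
S_{\widehat{A}}=J_m^{-T}M_{\widehat{A}}J_n^{-1}=J_m^{-T}P_mM_AP_nJ_n^{-1}=J_m^{-T}P_mJ_m^TS_AJ_nP_nJ_n^{-1}.
\]
The only delicate points are the index bookkeeping under the reversal of the mesh and the justification that $\widehat{A}$ is the checkerboard extension of its restriction to $\Delta'$. I expect the latter to be the main step, and the bilinearity observation in the first paragraph settles it.
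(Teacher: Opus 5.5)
Your proof is correct and follows essentially the same route as the paper: reflect through the survival formula \eqref{eq:Chat}, compute one matrix of $\widehat{A}$ directly, and obtain the others from Proposition~\ref{prop:connection}, using $D_{\delta'_1}=P_mD_{\delta_1}P_m$ and $D_{\delta'_2}=P_nD_{\delta_2}P_n$ (the paper uses these identities implicitly). The only difference is the order: the paper reads off $G_{\widehat{A}}=P_mG_AP_n$ from $\partial_1\partial_2\widehat{A}(u,v)=\partial_1\partial_2A(1-u,1-v)$ and then derives $M_{\widehat{A}}$, whereas you obtain $M_{\widehat{A}}$ from the volume identity and derive $G_{\widehat{A}}$ from it. Your bilinearity argument also makes the checkerboard structure of $\widehat{A}$ slightly more explicit than the paper's constant-density remark.
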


\begin{proof}
    By equation \eqref{eq:Chat} we have 
$$\partial_1\partial_2\widehat{A}(u,v) = \partial_1\partial_2A(1-u,1-v).$$
so $\partial_1\partial_2\widehat{A}(u,v)$ is constant on every rectangle $(1-a_i,1-a_{i-1})\times (1-b_j,1-b_{j-1})$. Thus $\widehat{A}$ is a checkerboard copula with splitting points $\Delta'$. For its density matrix $G_{\widehat{A}} = [g'_{ij}]_{ij}$ we have
$g'_{ij} = g_{m+1-i,n+1-j}$ for any $i\in [m]$ and $j \in [n]$, so $G_{\widehat{A}} = P_m G_A P_n$. Furthermore,
\begin{align*}
    M_{\widehat{A}} &= D_{\delta'_1} G_{\widehat{A}} D_{\delta'_2} = D_{\delta'_1} P_m G_A P_n D_{\delta'_2} = D_{\delta'_1} P_m D_{\delta_1}^{-1} M_A 
    D_{\delta_2}^{-1} P_n D_{\delta'_2} = P_m M_A P_n, \text{ and} \\
    S_{\widehat{A}} &= J_m^{-T} M_{\widehat{A}} J_n^{-1} = J_m^{-T} P_m M_A P_n J_n^{-1} = J_m^{-T} P_m J_m^T S_A J_n P_n J_n^{-1}
\end{align*}
by Proposition~\ref{prop:connection}.
\end{proof}

We conclude this section by characterizing when a matrix is the mass matrix, the density matrix, or the value matrix of some checkerboard copula.

\begin{proposition}\label{prop:matrices}
Let $\Delta=\delta_1 \times \delta_2$ be a mesh of splitting points, where $|\delta_1|=m+1$ and $|\delta_2|=n+1$.
\begin{enumerate}[$(i)$]
 \item A matrix $M = [m_{ij}]_{ij} \in \RR^{m\times n}$ is the mass matrix of some checkerboard copula with splitting points $\Delta$ if and only if it is entry-wise nonnegative and 
$$ M\mathbf{1}_n = \mathbf{v}_{\delta_1}, \qquad M^T\mathbf{1}_m = \mathbf{v}_{\delta_2}. $$
 \item A matrix $G = [g_{ij}]_{ij} \in \RR^{m\times n}$ is the density matrix of some checkerboard copula with splitting points $\Delta$ if and only it is entry-wise nonnegative and 
 $$ G\mathbf{v}_{\delta_2} = \mathbf{1}_m, \qquad G^T\mathbf{v}_{\delta_1} = \mathbf{1}_n. $$
 \item A matrix $S = [s_{ij}]_{ij} \in \RR^{m\times n}$ is the value matrix of some checkerboard copula with splitting points $\Delta$ if and only if the matrix $J_m^T S J_n$  is entry-wise nonnegative and 
$$ s_{in} = a_i \text{ for all } i \in [m], \text { and } s_{mj} = b_j \text{ for all } j \in [n].$$
\end{enumerate}
\end{proposition}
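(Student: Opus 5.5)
The plan is to prove (i) directly from the definition of a checkerboard copula, and then deduce (ii) and (iii) from (i) via the linear relations $M_A = J_m^T S_A J_n = D_{\delta_1} G_A D_{\delta_2}$ of Proposition~\ref{prop:connection}. The only step with real content is the sufficiency in (i); it amounts to checking that the extension by \eqref{eq:CB} gives a genuine copula.

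For necessity in (i), let $M = M_A$. Each entry $m_{ij}$ is the $A$-volume of a rectangle, so $m_{ij}\ge 0$ because $A$ is $2$-increasing. Summing over $j$ telescopes:
$$\sum_j m_{ij} = A(a_i,1)-A(a_{i-1},1)-A(a_i,0)+A(a_{i-1},0) = a_i-a_{i-1},$$
using groundedness and uniform marginals. Hence $M\mathbf{1}_n=\mathbf{v}_{\delta_1}$, and symmetrically $M^T\mathbf{1}_m=\mathbf{v}_{\delta_2}$.

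For sufficiency in (i), start from a nonnegative $M$ with these row and column sums and set
$$\widetilde C(a_i,b_j)=\sum_{k\le i,\,l\le j} m_{kl}.$$
This function is grounded, since empty sums vanish. It has uniform marginals: $\widetilde C(a_i,1)=\sum_{k\le i}(a_k-a_{k-1})=a_i$, and similarly $\widetilde C(1,b_j)=b_j$. Its cell volumes are exactly the $m_{ij}\ge 0$. So $\widetilde C$ is a discrete copula, and its bilinear extension $A$ is a checkerboard copula with splitting points $\Delta$. One checks from \eqref{eq:CB} that the extension is a copula, as the paper implicitly asserts. Concretely, it has constant density $m_{ij}/((a_i-a_{i-1})(b_j-b_{j-1}))\ge 0$ on each cell, and it is grounded with linear, hence uniform, marginals on the boundary. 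Since $A$ agrees with $\widetilde C$ on $\Delta$, we get $M_A=M$.

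For (ii), the diagonal matrices $D_{\delta_1},D_{\delta_2}$ are invertible with positive diagonals, so $G\mapsto D_{\delta_1} G D_{\delta_2}$ is a bijection that preserves entrywise nonnegativity. By Proposition~\ref{prop:connection}, $G$ is a density matrix if and only if $M=D_{\delta_1}GD_{\delta_2}$ is a mass matrix. Now $D_{\delta_2}\mathbf{1}_n=\mathbf{v}_{\delta_2}$, so $M\mathbf{1}_n = D_{\delta_1}G\mathbf{v}_{\delta_2}$. This equals $\mathbf{v}_{\delta_1}=D_{\delta_1}\mathbf{1}_m$ if and only if $G\mathbf{v}_{\delta_2}=\mathbf{1}_m$. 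The column condition is handled in the same way, and (ii) follows from (i).

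For (iii), $J_m$ and $J_n$ are invertible, so $S$ is a value matrix if and only if $M=J_m^TSJ_n$ is a mass matrix. Here $M$ determines $S=J_m^{-T}MJ_n^{-1}$, whose entries are the partial sums $s_{ij}=\sum_{k\le i,l\le j}m_{kl}$. The only point needing care is matching the marginal conditions. The row-sum condition $M\mathbf{1}_n=\mathbf{v}_{\delta_1}$ is equivalent, by taking partial sums over $k\le i$ (an invertible operation), to $\sum_{k\le i}\sum_l m_{kl}=a_i$ for all $i$, that is, $s_{in}=a_i$. Likewise the column condition is equivalent to $s_{mj}=b_j$. Given these equivalences, (iii) follows from (i).
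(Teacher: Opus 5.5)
Your proof is correct and follows essentially the same route as the paper. Part (i) uses the telescoping row and column sums for necessity and, for sufficiency, the partial-sum discrete copula $s_{ij}=\sum_{k\le i,\,\ell\le j} m_{k\ell}$ with its bilinear extension; parts (ii) and (iii) are reduced to (i) through the invertible relations $M_A = D_{\delta_1} G_A D_{\delta_2} = J_m^T S_A J_n$ of Proposition~\ref{prop:connection}.
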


\begin{proof}
$(i)$: Let $M$ be a mass matrix of a checkerboard copula with splitting points $\Delta$. Then $M$ is entry-wise nonnegative and we have for every $i \in [m]$
$$\sum_{j=1}^n m_{ij} = \sum_{j=1}^n V_A([a_{i-1},a_i] \times [b_{j-1},b_j]) = V_A([a_{i-1},a_i] \times [0, 1]) = a_i-a_{i-1},$$
so $M\mathbf{1}_n = \mathbf{v}_{\delta_1}$. The equality $M^T\mathbf{1}_m = \mathbf{v}_{\delta_2}$ is proved analogously. Now suppose that $M$ is entry-wise nonnegative, $M\mathbf{1}_n = \mathbf{v}_{\delta_1}$ and $M^T\mathbf{1}_m = \mathbf{v}_{\delta_2}$. Note that vectors $\mathbf{v}_{\delta_1}$ and $\mathbf{v}_{\delta_2}$ uniquely determine the sets $\delta_1$ and $\delta_2$ and that $\mathbf{1}_m^T\mathbf{v}_{\delta_1} = a_m = \mathbf{1}_n^T\mathbf{v}_{\delta_2} = b_n = 1$. Let $S = J_m^{-T} M_{\widehat{A}} J_n^{-1}$. For every $i\in [m]$ and $j \in [n]$ we have
$$ s_{ij} = \sum_{k=1}^i \sum_{\ell=1}^j m_{k\ell}.$$
Let $\widetilde{A} \colon \Delta \to \II$ be a function defined by $\widetilde{A}(a_i,b_j) = s_{ij}$ and $\widetilde{A}(0,0) = \widetilde{A}(a_i,0) = \widetilde{A}(0, b_j) = 0$ for every $i \in [m]$ and $j \in [n]$. Then $\widetilde{A}$ is grounded, we have $V_{\widetilde{A}}([a_{i-1},a_i]\times[b_{j-1},b_j]) = m_{ij} \ge 0$, 
$$\widetilde{A}(a_i,1) = s_{in} = \sum_{k=1}^i \sum_{\ell=1}^n m_{k\ell} = \sum_{k=1}^i (a_k-a_{k-1}) = a_i,$$
and, similarly, $\widetilde{A}(1,b_j) = b_j$ for every $i\in [m]$ and $j \in [n]$. Hence, $\widetilde{A}$ is a discrete copula. Finally, we extend discrete copula $\widetilde{A}$ to a checkerboard copula $A$ using bilinear functions. By the above, the mass matrix of copula $A$ is $M$.

$(ii)$: Let $G$ be the density matrix of a checkerboard copula with splitting points $\Delta$. Then $M = D_{\delta_1} G D_{\delta_2}$ is its mass matrix by Proposition~\ref{prop:connection}, $M$ is entry-wise nonnegative, $M\mathbf{1}_n = \mathbf{v}_{\delta_1}$ and $M^T\mathbf{1}_m = \mathbf{v}_{\delta_2}$ by $(i)$. It follows that $G=D_{\delta_1}^{-1} M D_{\delta_2}^{-1}$ is entry-wise nonnegative, 
$$G\mathbf{v}_{\delta_2} = (D_{\delta_1}^{-1} M D_{\delta_2}^{-1})(D_{\delta_2} \mathbf{1}_n) = D_{\delta_1}^{-1} \mathbf{v}_{\delta_2} = \mathbf{1}_n,$$ 
and, similarly, $G^T\mathbf{v}_{\delta_1} = \mathbf{1}_m$. Conversely, suppose that matrix $G$ is entry-wise nonnegative, $G\mathbf{v}_{\delta_2} = \mathbf{1}_n$ and $G^T\mathbf{v}_{\delta_1} = \mathbf{1}_m$. Let $M = D_{\delta_1} G D_{\delta_2}$. Then $M$ is entry-wise nonnegative, $M\mathbf{1}_n = D_{\delta_1} G D_{\delta_2}\mathbf{1}_n = D_{\delta_1} G \mathbf{v}_{\delta_2} = D_{\delta_1} \mathbf{1}_m = \mathbf{v}_{\delta_1}$ and $M^T\mathbf{1}_m = \mathbf{v}_{\delta_2}$, so $M$ is the mass matrix of some checkerboard copula with splitting points $\Delta$ by $(i)$, and thus $G$ is its density matrix.

$(iii)$: Let $S$ be the value matrix of a checkerboard copula with splitting points $\Delta$. Then $M = J_m^T S J_n$ is its mass matrix by Proposition~\ref{prop:connection}, $M$ is entry-wise nonnegative, $M\mathbf{1}_n = \mathbf{v}_{\delta_1}$ and $M^T\mathbf{1}_m = \mathbf{v}_{\delta_2}$ by $(i)$. As already proven in $(i)$, we have $s_{in} = a_i$ and $s_{mj} = b_j$ for every $i\in [m]$ and $j \in [n]$. Conversely, suppose that $S$ is a matrix, such that $M = J_m^T S J_n$ is entry-wise nonnegative, $s_{in} = a_i$ and $s_{mj} = b_j$ for every $i\in [m]$ and $j \in [n]$. Then 
$$M\mathbf{1}_n = J_m^T S J_n \mathbf{1}_n = J_m^T S (0, 0, ..., 0, 1)^T = J_m^T (a_1, a_2, ..., a_m)^T = \mathbf{v}_{\delta_1}$$ and $M^T\mathbf{1}_m = \mathbf{v}_{\delta_2}$, so $M$ is the mass matrix of some checkerboard copula with splitting points $\Delta$ by $(i)$, thus $S$ is its value matrix.
\end{proof}

\section{Properties of the transformation $\Psi$}\label{sec:properties}

In this section we list some properties of the transformation $\Psi$, defined in equation~\eqref{eq:psi}, and of copulas in the range of $\Psi$, i.e., the set $\{\Psi(C) \mid C \text{ a copula}\}$. We first show that $\Psi$ commutes with the survival and ordinal sum operations.

\begin{lemma}\label{lem:psi_hat}
    Let $C$ be a copula. Then $\Psi(\widehat{C})=\widehat{\Psi(C)}$.
\end{lemma}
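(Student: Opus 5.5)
We prove the identity directly from the integral formula \eqref{eq:psi} and the definition \eqref{eq:Chat} of the survival copula. The plan is to compute the partial derivative $\partial_1 \widehat{C}$, substitute it into $\Psi(\widehat{C})$, expand the product, and recognise each resulting term. From \eqref{eq:Chat}, $\widehat{C}(t,u) = t+u-1+C(1-t,1-u)$. Hence for almost every $t$,
$$\partial_1 \widehat{C}(t,u) = 1 - \partial_1 C(1-t,1-u)$$
for all $u$, since the set of $t$ where $\partial_1 C(t,\cdot)$ exists for all arguments has full measure, and so does its reflection.

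Substituting into \eqref{eq:psi} gives
\begin{align*}
\Psi(\widehat{C})(u,v) &= \int_0^1 \bigl(1-\partial_1 C(1-t,1-u)\bigr)\bigl(1-\partial_1 C(1-t,1-v)\bigr)\,dt \\
&= 1 - \int_0^1 \partial_1 C(s,1-u)\,ds - \int_0^1 \partial_1 C(s,1-v)\,ds + \int_0^1 \partial_1 C(s,1-u)\,\partial_1 C(s,1-v)\,ds,
\end{align*}
where we change variables $s=1-t$. Since $C(\cdot,w)$ is Lipschitz and hence absolutely continuous, we have $\int_0^1 \partial_1 C(s,w)\,ds = C(1,w)-C(0,w) = w$. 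The two middle integrals therefore equal $1-u$ and $1-v$. The last integral is $\Psi(C)(1-u,1-v)$ by \eqref{eq:psi}.

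Combining these, $\Psi(\widehat{C})(u,v) = 1-(1-u)-(1-v)+\Psi(C)(1-u,1-v) = u+v-1+\Psi(C)(1-u,1-v)$. This is exactly $\widehat{\Psi(C)}(u,v)$ by \eqref{eq:Chat} applied to the copula $\Psi(C)$. The only step needing care is the almost-everywhere justification of the derivative formula and of the fundamental theorem of calculus, and both follow from the Lipschitz continuity of copulas.
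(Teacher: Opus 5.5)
Your proof is the same as the paper's: you differentiate $\widehat{C}(t,u)=t+u-1+C(1-t,1-u)$ in $t$, substitute $s=1-t$, expand the product, use $\int_0^1\partial_1 C(s,w)\,ds=w$, and recognize $\Psi(C)(1-u,1-v)$ together with the survival formula. One small correction to your justification: it is not true that for a.e.\ $t$ the derivative $\partial_1 C(t,\cdot)$ exists at all arguments (for $C=M$ it fails at the argument $t$, for every $t$), but you only need the identity for each fixed $u$, where $\partial_1\widehat{C}(t,u)=1-\partial_1 C(1-t,1-u)$ holds pointwise by the chain rule whenever either side exists, and that suffices for the integrals.
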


\begin{proof}
Since $\widehat{C}(t,u)=t+u-1+C(1-t,1-u)$, we have $\partial_1 \widehat{C}(t,u)=1-\partial_1 C(1-t,1-u)$ for all $t,u \in \II$. Hence, for all $u,v \in \II$
\begin{align*}
    \Psi(\widehat{C})(u,v)
    &=\int_0^1 \partial_1 \widehat{C}(t,u) \partial_1 \widehat{C}(t,v) dt \\
    &=\int_0^1 \big(1-\partial_1 C(1-t,1-u)\big)\big(1-\partial_1 C(1-t,1-v)\big) dt\\
    &=\int_0^1 \big(1-\partial_1 C(s,1-u)\big)\big(1-\partial_1 C(s,1-v)\big) ds\\
    &=1-\int_0^1 \partial_1 C(s,1-v) ds-\int_0^1 \partial_1 C(s,1-u) ds+\int_0^1 \partial_1 C(s,1-u)\partial_1 C(s,1-v) ds\\
    &=1-(1-v)-(1-u)+\Psi(C)(1-u,1-v)\\
    &=\widehat{\Psi(C)}(u,v),
\end{align*}
which finishes the proof.
\end{proof}

\begin{proposition}\label{prop:ordinal}
    If $C$ is the ordinal sum of $\{C_i\}_{i \in I}$ with respect to $\{J_i\}_{i \in I}$, then $\Psi(C)$ is the ordinal sum of $\{\Psi(C_i)\}_{i \in I}$ with respect to $\{J_i\}_{i \in I}$.
\end{proposition}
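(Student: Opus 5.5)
We compute $\partial_1 C(t,u)$ explicitly from the definition \eqref{eq:ordinal} of the ordinal sum and substitute it into \eqref{eq:psi}. Write $U=\bigcup_{i\in I}(a_i,b_i)$.

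First we describe $\partial_1 C$. For $t\notin \overline{U}$, and in fact for almost every $t\notin U$, the copula near $(t,u)$ behaves like $M$ in the first variable. The mass of $C$ restricted to the vertical strip at $t$ then sits on the diagonal point $(t,t)$, so $\partial_1 C(t,u)=\mathbf{1}_{\{u>t\}}$ for a.e. such $t$, up to the single value $u=t$.

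For $t\in(a_i,b_i)$ there are three cases:
\begin{itemize}
\item If $u\in(a_i,b_i)$, then $\partial_1 C(t,u)=\partial_1 C_i\bigl(\tfrac{t-a_i}{b_i-a_i},\tfrac{u-a_i}{b_i-a_i}\bigr)$, by the chain rule; the scaling factors $b_i-a_i$ cancel.
\item If $u\le a_i$, then $C(t,u)=\min(t,u)=u$, so $\partial_1 C(t,u)=0$.
\item If $u\ge b_i$, then $C(t,u)=t$, so $\partial_1C(t,u)=1$.
\end{itemize}
In every case $\partial_1 C(t,u)\in\{0,1\}$ unless $t$ and $u$ lie in the same interval $J_i$.

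Next we evaluate $\Psi(C)(u,v)=\int_0^1\partial_1C(t,u)\partial_1C(t,v)\,dt$. Suppose first that $u,v\in J_i$ for some $i$. Split the integral into $t\in J_i$ and $t\notin J_i$. For $t\notin J_i$ with $t<a_i$, both factors equal $1$. This holds when $t$ lies in another interval $J_k$ (necessarily $J_k$ lies left of $a_i$, so $u,v\ge b_k$) and also when $t\notin U$. For $t>b_i$, both factors are $0$. The outside part therefore contributes $a_i$. On $J_i$, the substitution $s=(t-a_i)/(b_i-a_i)$ gives $(b_i-a_i)\Psi(C_i)\bigl(\tfrac{u-a_i}{b_i-a_i},\tfrac{v-a_i}{b_i-a_i}\bigr)$. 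Together these match the first line of \eqref{eq:ordinal} for the family $\{\Psi(C_i)\}$.

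Otherwise, take $u\le v$ without loss of generality, by symmetry of $\Psi(C)$. We need $\Psi(C)(u,v)=u$. Since the integral of $\partial_1C(t,v)$ over $t$ is $v$ and $0\le\partial_1 C\le1$, it suffices to show that $\partial_1C(t,u)\partial_1C(t,v)=\partial_1C(t,u)$ for a.e. $t$. By monotonicity in the second variable, $\partial_1C(t,u)\le\partial_1C(t,v)$, so the claim reduces to showing that $\partial_1C(t,v)=1$ whenever $\partial_1C(t,u)>0$.

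If $\partial_1C(t,u)>0$, then either $t<u$ with $t,u$ not in a common interval, or $t,u\in J_k$ for some $k$. In the first case $v\ge u>t$ and $v$ is not in the interval of $t$, so the factor is $1$. In the second case, $u,v$ are not in a common interval, so $v\ge b_k$ and again the factor is $1$. Hence $\Psi(C)(u,v)=\int_0^1\partial_1C(t,u)\,dt=u=M(u,v)$, as required.

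The main obstacle is being careful with the a.e. statements for $\partial_1C$, especially for $t$ outside $U$. Off $U$ we should use the general fact that $\partial_1 C(t,\cdot)$ is the conditional distribution function. Since $C$ coincides with $M$ on rectangles $[x,y]\times\II$ that avoid the $J_i$'s, this gives $\partial_1C(t,u)=\mathbf{1}_{\{u>t\}}$ except on a null set. Only countably many intervals are involved, so the exceptional sets stay null.
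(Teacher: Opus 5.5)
Your proof is correct and follows the paper's argument. You use the same piecewise formula for $\partial_1 C$, split the integral at $a_i$ and $b_i$ when $u,v\in J_i$, and otherwise reduce to $\int_0^1\partial_1 C(t,u)\,dt=u$; your observation that $\partial_1 C(t,v)=1$ wherever $\partial_1 C(t,u)>0$ handles all off-block cases at once, including $u,v\notin U$, which the paper settles only by appeal to $\Psi(M)=M$.

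One small point: rectangles $[x,y]\times\II$ avoiding every $J_i$ need not exist (e.g.\ if $\II\setminus U$ is a fat Cantor set), so that sentence does not justify the off-$U$ formula. Instead, note that for fixed $u$ the section $C(\cdot,u)$ equals $\min(\cdot,u)$ outside the single interval containing $u$. This gives $\partial_1 C(t,u)=\mathbf{1}_{\{u>t\}}$ for all $t\notin U$ with at most two exceptions.
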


\begin{proof}
Let $J_i=(a_i,b_i)$ for all $i \in I$. By equation~\eqref{eq:ordinal} we have
\begin{equation}\label{eq:partial_C}
\partial_1C(u,v)=\begin{cases}
    \partial_1 C_i\big(\frac{u-a_i}{b_i-a_i},\frac{v-a_i}{b_i-a_i}\big), & (u,v) \in (a_i,b_i) \text{ for some } i \in I,\\
    \partial_1 M(u,v), & \text{otherwise},
\end{cases}
\end{equation}
whenever the partial derivatives on the right-hand side exist. Note also that $\partial_1 M(u,v)$ equal $1$ if $u<v$ and $0$ if $u>v$. 

Now let $u,v \in (a_i,b_i)$. Then
\begin{align*}
    \Psi(C)(u,v) &= \int_0^1 \partial_1 C(t,u) \partial_1 C(t,v) dt\\
    &= \int_{0}^{a_i} \partial_1 C(t,u) \partial_1 C(t,v) dt
    +\int_{a_i}^{b_i} \partial_1 C(t,u) \partial_1 C(t,v) dt
    +\int_{b_i}^{1} \partial_1 C(t,u) \partial_1 C(t,v) dt\\
    &=a_i+\int_{a_i}^{b_i} \partial_1 C(t,u) \partial_1 C(t,v) dt,
\end{align*}
since $\partial_1 C(t,u)=\partial_1 C(t,v)=1$ for all $t\le a_i$ and $\partial_1 C(t,u)=\partial_1 C(t,v)=0$ for all $t \ge b_i$.
Thus, by equation~\eqref{eq:partial_C}, we obtain
\begin{align*}
    \Psi(C)(u,v) &= a_i+\int_{a_i}^{b_i} \partial_1 C_i\big(\tfrac{t-a_i}{b_i-a_i},\tfrac{u-a_i}{b_i-a_i}\big) \partial_1 C_i\big(\tfrac{t-a_i}{b_i-a_i},\tfrac{v-a_i}{b_i-a_i}\big) dt\\
    &= a_i+(b_i-a_i)\int_0^1 \partial_1 C_i\big(s,\tfrac{u-a_i}{b_i-a_i}\big) \partial_1 C_i\big(s,\tfrac{v-a_i}{b_i-a_i}\big) ds\\
    &=a_i+(b_i-a_i)\Psi(C_1)\big(\tfrac{u-a_i}{b_i-a_i},\tfrac{v-a_i}{b_i-a_i}\big).
\end{align*}
Next, let $u \in (a_i,b_i)$ and $v\ge b_i$. Since $\partial_1 C(t,u)=0$ for all $t \ge b_i$ and $\partial_1 C(t,v)=1$ for all $t \le b_i$, we have
\begin{align*}
    \Psi(C)(u,v) &= \int_0^1 \partial_1 C(t,u) \partial_1 C(t,v) dt= \int_{0}^{b_i} \partial_1 C(t,u) dt=C(b_i,u)=M(b_i,u)=u=M(u,v).
\end{align*}
Now, let $u \in (a_i,b_i)$ and $v \le a_i$. Since $\partial_1 C(t,u)=1$ for all $t \le a_i$ and $\partial_1 C(t,v)=0$ for all $t \ge a_i$, we have
\begin{align*}
    \Psi(C)(u,v) &= \int_0^1 \partial_1 C(t,u) \partial_1 C(t,v) dt= \int_{0}^{a_i} \partial_1 C(t,v) dt=C(a_i,v)=v=M(u,v).
\end{align*}
By symmetry, if $v \in (a_i,b_i)$ and $u \notin (a_i,b_i)$, we get $\Psi(u,v)=M(u,v)$.
Finally, if $u,v \notin \bigcup_{i \in I} (a_i,b_i)$, then $\Psi(C)(u,v)=\Psi(M)(u,v)=M(u,v)$, since $\Psi(M)=M$.
This finishes the proof.
\end{proof}

Next, we investigate the properties of matrices associated to checkerboard copulas in the range of $\Psi$.

\begin{theorem} \label{thm:psd}
    Let $A$ be a checkerboard copula with splitting points in a symmetric mesh $\uD=\delta_2 \times \delta_2$ such that $A = \Psi(C)$ for some copula $C$.
    Then the value matrix, mass matrix, and density matrix of $A$ are all positive semidefinite. 
\end{theorem}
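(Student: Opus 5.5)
The plan is to write each of the three matrices explicitly as a Gram matrix coming from the integral formula \eqref{eq:psi}. Let $\uD=\delta_2\times\delta_2$ with splitting points $0=b_0<b_1<\dots<b_n=1$. For each $i\in[n]$, set $f_i(t)=\partial_1 C(t,b_i)$. This is defined for almost every $t$, takes values in $[0,1]$, and so lies in $L^2(\II)$.

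\textbf{Value matrix.} By \eqref{eq:psi},
\[ s_{ij}=A(b_i,b_j)=\Psi(C)(b_i,b_j)=\int_0^1 f_i(t)f_j(t)\,dt=\langle f_i,f_j\rangle_{L^2}. \]
Hence $S_A$ is the Gram matrix of $f_1,\dots,f_n$. For any $x\in\RR^n$ we get $x^TS_Ax=\|\sum_i x_if_i\|^2\ge 0$, so $S_A$ is positive semidefinite.

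\textbf{Mass matrix.} By Proposition~\ref{prop:connection}, $M_A=J_n^TS_AJ_n$. Since $S_A$ is positive semidefinite, this congruence gives $x^TM_Ax=(J_nx)^TS_A(J_nx)\ge 0$. One can also see it directly: with $g_i=f_i-f_{i-1}=\partial_1C(t,b_i)-\partial_1C(t,b_{i-1})$ and $f_0=0$, we have $m_{ij}=\langle g_i,g_j\rangle$. Moreover $g_i\ge 0$ almost everywhere, because $C$ is $2$-increasing. So $M_A$ is in fact a Gram matrix of nonnegative functions, which hints at complete positivity, though this is not needed here.

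\textbf{Density matrix.} Again by Proposition~\ref{prop:connection}, $G_A=D_{\delta_2}^{-1}M_AD_{\delta_2}^{-1}$. The diagonal matrix $D_{\delta_2}$ is invertible because its entries $b_i-b_{i-1}$ are positive, and it is symmetric, so $G_A$ is a congruence of $M_A$ and therefore positive semidefinite.

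\textbf{Main obstacle.} The only real subtlety is justifying the identity $s_{ij}=\langle f_i,f_j\rangle$. This needs $\partial_1C(\cdot,b_i)$ to exist almost everywhere for each of the finitely many fixed values $b_i$. That holds because $t\mapsto C(t,b_i)$ is Lipschitz, and the pointwise formula \eqref{eq:psi} at $(b_i,b_j)$ is exactly how $\Psi(C)$ is defined. Everything after that is linear algebra.
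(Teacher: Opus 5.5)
Your proof is correct and is essentially the paper's argument: $S_A$ is the Gram matrix of the functions $\partial_1 C(\cdot,b_i)$ in $L^2(\II)$, and $M_A=J_n^TS_AJ_n$ and $G_A=D_{\delta_2}^{-1}M_AD_{\delta_2}^{-1}$ are congruences of it, hence positive semidefinite. Your side remark is worth more than a hint: since each $g_i\ge 0$, for every copositive $X$ one has $\sum_{i,j}X_{ij}m_{ij}=\int_0^1 \mathbf{g}(t)^TX\mathbf{g}(t)\,dt\ge 0$ with $\mathbf{g}=(g_1,\ldots,g_n)^T$, so $M_A$ (and hence $G_A$) lies in the dual of the copositive cone, i.e.\ the completely positive cone, which appears to settle the implication (P3) that the paper leaves open in Figure~\ref{fig:results}.
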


\begin{proof}
Let $S_A=[s_{ij}]_{ij} \in \RR^{n \times n}$ be the value matrix of $A$. By equation~\eqref{eq:psi} we have
$$s_{ij}=A(b_i,b_j)=\int_0^1 \partial_1 C(t,b_i) \partial_1 C(t,b_j) dt.$$
For any vector $\mathbf{x}=(x_1,\ldots,x_n) \in \RR^n$ we have
\begin{align*}
    \mathbf{x}^T S_A \mathbf{x} &=\sum_{i,j=1}^n s_{ij}x_ix_j=\sum_{i,j=1}^n \int_0^1 \partial_1 C(t,b_i) \partial_1 C(t,b_j) x_i x_j dt\\
    &=\int_0^1 \left(\sum_{i=1}^n \partial_1 C(t,b_i)x_i\right)^2 dt \ge 0.
\end{align*}
This proves that $S_A$ is positive semidefinite. For the mass matrix $M_A$ and the density matrix $G_A$ we have $M_A=J_n^TS_AJ_n$ and $G_A=D_{\delta_2}^{-T} M_A D_{\delta_2}^{-1}$ by Proposition~\ref{prop:connection}, so that both $M_A$ and $G_A$ are positive semidefinite as well.
\end{proof}

\begin{theorem}\label{thm:cb_cp}
    Let $A_1$ be a checkerboard copula with splitting points $\Delta=\delta_1 \times \delta_2$ and $A = \Psi(A_1)$. Then $A$  is a checkerboard copula with splitting points $\uD=\delta_2 \times \delta_2$ and its mass matrix and density matrix are completely positive.
    Furthermore,
    \begin{equation}\label{eq:G_psi}
        G_{A}=G_{A_1}^TD_{\delta_1}G_{A_1}.
    \end{equation}
\end{theorem}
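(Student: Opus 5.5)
The plan is to compute $\Psi(A_1)$ explicitly from formula \eqref{eq:psi}, using that $\partial_1 A_1(t,\cdot)$ is piecewise linear in the second variable with breakpoints in $\delta_2$.

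\textbf{Step 1: the partial derivative.} Fix $t \in (a_{k-1},a_k)$ for some $k \in [m]$. Since $A_1$ is bilinear on each rectangle and its density there equals $g_{k\ell}$ (the entries of $G_{A_1}$), and since $\partial_1 A_1(t,0)=0$, I will write
$$\partial_1 A_1(t,v)=\int_0^v \partial_1\partial_2 A_1(t,s)\,ds=\sum_{\ell=1}^n g_{k\ell}\,\lambda_\ell(v),$$
where $\lambda_\ell(v)$ is the length of $[b_{\ell-1},b_\ell]\cap[0,v]$. Each $\lambda_\ell$ is continuous, piecewise linear with breakpoints in $\delta_2$, and independent of $t$ within the strip. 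In particular, $\partial_1A_1(t,v)$ depends on $t$ only through $k$.

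\textbf{Step 2: the product integral.} Plugging into \eqref{eq:psi} gives
$$\Psi(A_1)(u,v)=\sum_{k=1}^m (a_k-a_{k-1})\sum_{\ell,\ell'} g_{k\ell}g_{k\ell'}\lambda_\ell(u)\lambda_{\ell'}(v)=\sum_{\ell,\ell'} h_{\ell\ell'}\lambda_\ell(u)\lambda_{\ell'}(v),$$
where $H=[h_{\ell\ell'}]=G_{A_1}^T D_{\delta_1} G_{A_1}$.

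On a cell $(b_{i-1},b_i)\times(b_{j-1},b_j)$, every $\lambda_\ell(u)$ is either constant or equals $u-b_{i-1}$ (exactly when $\ell=i$), and similarly in $v$. Hence $\Psi(A_1)$ is bilinear on each cell of $\uD$, with $\partial_1\partial_2\Psi(A_1)=\lambda_i'(u)\lambda_j'(v)h_{ij}=h_{ij}$.

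Since $\Psi(A_1)$ is a copula, it agrees with the checkerboard extension of its restriction to $\uD$: a function that is bilinear on each cell is determined by its values at the cell corners via \eqref{eq:CB}. So $A$ is a checkerboard copula with splitting points $\uD$, and $G_A=H$, which is \eqref{eq:G_psi}.

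\textbf{Step 3: complete positivity.} Set $N=D_{\delta_1}^{1/2}G_{A_1}$. This matrix is entry-wise nonnegative by Proposition~\ref{prop:matrices}(ii), and $G_A=N^TN$, so $G_A$ is completely positive. For the mass matrix, Proposition~\ref{prop:connection} gives
$$M_A=D_{\delta_2}G_AD_{\delta_2}=(ND_{\delta_2})^T(ND_{\delta_2}),$$
and $ND_{\delta_2}$ is still nonnegative, so $M_A$ is completely positive as well.

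\textbf{Main obstacle.} The delicate point is Step 1: $\partial_1 A_1$ exists only for $t$ off the points of $\delta_1$, and I must justify differentiating under the integral. This is handled by noting that those points form a null set and that on each open strip the function $A_1$ is affine in $t$. Checking that bilinearity on the open cells yields the checkerboard form, by continuity up to the boundary, is routine.
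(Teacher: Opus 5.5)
Your proof is correct and is essentially the paper's argument. Both proofs identify the density of $\Psi(A_1)$ on each cell $(b_{i-1},b_i)\times(b_{j-1},b_j)$ as $\sum_{k}(a_k-a_{k-1})g_{ki}g_{kj}$, giving $G_A=G_{A_1}^TD_{\delta_1}G_{A_1}$, and both get complete positivity by factoring through the nonnegative matrix $D_{\delta_1}^{1/2}G_{A_1}$ (and $D_{\delta_1}^{1/2}G_{A_1}D_{\delta_2}$ for $M_A$). The only difference is that you integrate first, writing $\Psi(A_1)(u,v)=\sum_{\ell,\ell'}h_{\ell\ell'}\lambda_\ell(u)\lambda_{\ell'}(v)$ in closed form, whereas the paper differentiates under the integral sign; this gives you bilinearity on each closed cell directly rather than only a constant mixed derivative on the open cells.
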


\begin{proof}
By equation~\eqref{eq:psi} we have
$$A(u,v)=\int_0^1 \partial_1 A_1(t,u) \partial_1 A_1(t,v) dt.$$
Letting $(u,v) \in [b_{i-1},b_i] \times [b_{j-1},b_j]$ and differentiating with respect to $u$ ad $v$ we obtain
\begin{align*}
    \partial_1\partial_2 A(u,v) &=\int_0^1 \partial_1\partial_2 A_1(t,u) \partial_1\partial_2 A_1(t,v) dt=\sum_{k=1}^n \int_{a_{k-1}}^{a_k} \partial_1\partial_2 A_1(t,u) \partial_1\partial_2 A_1(t,v) dt.
\end{align*}
Since $A_1$ is a checkerboard copula with splitting points $\Delta$, all mixed partial derivatives on the right-hand side are constant, which justifies differentiation under the integral sign. Denoting the density matrix of $A_1$ by $G_{A_1}=[h_{ij}]_{ij} \in \RR^{m \times n}$, we obtain
\begin{align*}
    \partial_1\partial_2 A(u,v) =\sum_{k=1}^n \int_{a_{k-1}}^{a_k} h_{ki} h_{kj} dt=\sum_{k=1}^n h_{ki} h_{kj} (a_k-a_{k-1})=(G_{A_1}^TD_{\delta_1}G_{A_1})_{ij}.
\end{align*}
Since this mixed derivative is constant on $[b_{i-1},b_i] \times [b_{j-1},b_j]$ for all $i$ and $j$, we infer that $A$ is a checkerboard copula with splitting points $\uD$. Furthermore, we have shown that
\begin{equation}
    G_{A}=G_{A_1}^TD_{\delta_1}G_{A_1}=\Big(\sqrt{D_{\delta_1}}G_{A_1}\Big)^T\Big(\sqrt{D_{\delta_1}}G_{A_1}\Big).
\end{equation}
Since $\sqrt{D_{\delta_1}}G_{A_1}$ is entry-wise nonnegative matrix, we conclude that $G_A$ is completely positive. Proposition~\ref{prop:connection} implies that
$$M_A=D_{\delta_2} G_A D_{\delta_2}=\Big(\sqrt{D_{\delta_1}}G_{A_1}D_{\delta_2}\Big)^T\Big(\sqrt{D_{\delta_1}}G_{A_1} D_{\delta_2}\Big),$$
which is completely positive as well.
\end{proof}

\begin{theorem}\label{thm:cp-psi}
    Let $A$ be a checkerboard copula with splitting points in a symmetric mesh $\uD=\delta_2 \times \delta_2$, such that its density matrix is completely positive. Then $A=\Psi(A_1)$ for some checkerboard copula $A_1$ with splitting points in $\Delta=\delta_1 \times \delta_2$ for an appropriately chosen finite set $\delta_1$.
\end{theorem}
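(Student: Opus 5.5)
The plan is to reverse the computation of Theorem~\ref{thm:cb_cp}. Starting from a nonnegative factorization of $G_A$, I will rescale it into a density matrix $G_{A_1}$ that satisfies \eqref{eq:G_psi} for a suitably chosen $\delta_1$. Write $\mathbf{v}=\mathbf{v}_{\delta_2}$; all its entries are strictly positive. Since $G_A$ is completely positive, there is an entry-wise nonnegative $N\in\RR^{m\times n}$ with $G_A=N^TN$. By Proposition~\ref{prop:matrices}(ii) applied to $A$ on the mesh $\uD$, we have $G_A\mathbf{v}=\mathbf{1}_n$.

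First I clean up $N$. Let $\mathbf{w}=N\mathbf{v}\in\RR^m$, which is entry-wise nonnegative. If $w_k=0$, then row $k$ of $N$ is identically zero, because $N\ge 0$ and $\mathbf{v}>0$. Deleting such rows does not change $N^TN$, so I may assume $w_k>0$ for all $k$. Note that $N\ne 0$, since $G_A\mathbf{v}=\mathbf{1}_n\neq 0$, so at least one row survives.

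The key choice is the set $\delta_1$. I define it through its increment vector $\mathbf{v}_{\delta_1}=(w_1^2,\ldots,w_m^2)^T$, that is, $a_k=\sum_{\ell\le k}w_\ell^2$. For this to be a legitimate mesh I need $\sum_k w_k^2=1$, and indeed
$$\mathbf{w}^T\mathbf{w}=\mathbf{v}^TN^TN\mathbf{v}=\mathbf{v}^TG_A\mathbf{v}=\mathbf{v}^T\mathbf{1}_n=1.$$
The points $a_k$ are strictly increasing because each $w_k>0$. I then set
$$G_{A_1}=D_{\delta_1}^{-1/2}N=\diag(w_1^{-1},\ldots,w_m^{-1})\,N,$$
which is entry-wise nonnegative.

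Next I check the two conditions of Proposition~\ref{prop:matrices}(ii). The row condition holds because
$$G_{A_1}\mathbf{v}=\diag(w_k^{-1})\,\mathbf{w}=\mathbf{1}_m.$$
For the column condition, $D_{\delta_1}^{-1/2}\mathbf{v}_{\delta_1}=\mathbf{w}$, so
$$G_{A_1}^T\mathbf{v}_{\delta_1}=N^TD_{\delta_1}^{-1/2}\mathbf{v}_{\delta_1}=N^T\mathbf{w}=N^TN\mathbf{v}=G_A\mathbf{v}=\mathbf{1}_n.$$
Hence Proposition~\ref{prop:matrices}(ii) gives a checkerboard copula $A_1$ with splitting points $\Delta=\delta_1\times\delta_2$ and density matrix $G_{A_1}$.

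By Theorem~\ref{thm:cb_cp}, $\Psi(A_1)$ is a checkerboard copula with splitting points $\uD$, and its density matrix is
$$G_{A_1}^TD_{\delta_1}G_{A_1}=N^TD_{\delta_1}^{-1/2}D_{\delta_1}D_{\delta_1}^{-1/2}N=N^TN=G_A.$$
A checkerboard copula on a given mesh is determined by its density matrix: by Proposition~\ref{prop:connection}, $S=J_n^{-T}D_{\delta_2}GD_{\delta_2}J_n^{-1}$. Therefore $\Psi(A_1)=A$.

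The main point needing care is the choice of scaling $D_{\delta_1}=\diag(\mathbf{w})^2$. This single choice makes both marginal conditions hold simultaneously and the increments sum to one; the latter uses $\mathbf{v}^TG_A\mathbf{v}=1$. The only other subtlety is discarding zero rows of $N$, which is needed so that all $a_k$ are distinct.
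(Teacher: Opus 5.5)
Your proof is correct and is essentially the paper's argument. You factor $G_A=N^TN$ with no zero rows, take $\mathbf{v}_{\delta_1}$ to be the squares of the entries of $\mathbf{w}=N\mathbf{v}_{\delta_2}$, set $G_{A_1}=\diag(\mathbf{w})^{-1}N$, check Proposition~\ref{prop:matrices}(ii), and conclude via equation~\eqref{eq:G_psi} and the fact that the density matrix determines the checkerboard copula; your normalization $\mathbf{w}^T\mathbf{w}=\mathbf{v}^TG_A\mathbf{v}=1$ is the same computation the paper does through the total mass of $M_A$.
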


\begin{proof}
Let $|\delta_2|=n+1$. By assumption there exists a positive integer $m$ and an entry-wise nonnegative matrix $H \in \RR^{m \times n}$ such that $G_A=H^TH$.
Furthermore, assume that $m$ is minimal possible, so that $H$ has nonzero rows (any zero row can simply be omitted from $H$).
Let $\mathbf{d}=HD_{\delta_2}\mathbf{1}_n=H\mathbf{v}_{\delta_2} \in \RR^m$ and let $\delta_1=\{0,a_1,a_2,\ldots,a_m\}$, where $a_i=d_1^2+d_2^2+\ldots+d_i^2$.
Note that, by Proposition~\ref{prop:connection},
$$a_m=\mathbf{d}^T\mathbf{d}=\mathbf{1}_n^T D_{\delta_2}H^T HD_{\delta_2}\mathbf{1}_n=\mathbf{1}_n^T D_{\delta_2} G_A D_{\delta_2}\mathbf{1}_n=\mathbf{1}_n^T M_A\mathbf{1}_n=1,$$
since the sum of entries of $M_A$ is equal to $1$.
Note also that $D_{\delta_1}=D^2$, where $D=\diag(\mathbf{d})$, so that $\mathbf{v}_{\delta_1}=(d_1^2,d_2^2\ldots,d_m^2)^T$. Since the matrix $H$ has nonzero rows, vector $\mathbf{d}$ has nonzero components, so the matrix $D$ is invertible.
Furthermore, $\mathbf{v}_{\delta_1}=D_{\delta_1}\mathbf{1}_m=D^2\mathbf{1}_m$ and $\mathbf{v}_{\delta_2}=D_{\delta_2}\mathbf{1}_n$.
Now, let $G=D^{-1}H$, which is an entry-wise nonnegative matrix. Then
$$G\mathbf{v}_{\delta_2}=D^{-1}(H\mathbf{v}_{\delta_2})=D^{-1}\mathbf{d} =\mathbf{1}_m$$
and
$$G^T\mathbf{v}_{\delta_1}=(H^TD^{-1})(D^2\mathbf{1}_m)=H^T(D\mathbf{1}_m)=H^T\mathbf{d}=H^TH\mathbf{v}_{\delta_2}=G_A\mathbf{v}_{\delta_2}=\mathbf{1}_n$$
by Proposition~\ref{prop:matrices} $(ii)$.
Proposition~\ref{prop:matrices} $(ii)$ then implies that $G$ is the density matrix of some checkerboard copula $A_1$ with splitting points $\Delta=\delta_1 \times \delta_2$.
Equation~\eqref{eq:G_psi} implies that
$$G_{\Psi(A_1)}=G_{A_1}^TD_{\delta_1}G_{A_1}=G^TD_{\delta_1}G=(H^TD^{-1})D^2(D^{-1}H)=H^TH=G_A.$$
Since both $A$ and $\Psi(A_1)$ are checkerboard copulas (see Theorem~\ref{thm:cb_cp}) and the density matrix uniquely determines the checkerboard copula, we conclude that $A=\Psi(A_1)$.
\end{proof}

We collect the findings of the last three theorems in a schematic summary, along with two implications that are still open, see Figure~\ref{fig:results}.
We remark that if implication (P3) is true, then implication (P4) is false, since there exist positive semidefinite entry-wise nonegative matrices that are not completely positive, see \cite{BeShMo03}.

\begin{figure}[!ht]
    \centering
    \begin{tikzpicture}
    \def\vs{1.2}
    \draw[rounded corners=0.5cm] (-1.2-0.2*\vs,1.4*\vs) rectangle (13,-3.6*\vs);
    \path (-1.2,\vs) node[right]{Let $A$ be a checkerboard copula.};
    \path (-0.7,0) node {(P1)} (0,0) node[right, text width=5cm] {$A=\Psi(A_1)$ for some \\ checkerboard copula $A_1$}
    (5.5,0) node {$\Leftrightarrow$}
    (6.5,0) node[right ] {$M_A$, $G_A$ completely positive};
    \path (-0.7,-\vs) node {(P2)} (0,-\vs) node[right, text width=5cm] {$A=\Psi(C)$ for some \\ general copula $C$}
    (5.5,-\vs) node {$\Rightarrow$}
    (6.5,-\vs) node[right ] {$S_A$, $M_A$, $G_A$ positive semidefinite};
    \path (-0.7,-3*\vs) node {(P4)} (0,-3*\vs) node[right, text width=5cm] {$A=\Psi(C)$ for some \\ general copula $C$}
    (5.5,-3*\vs) node {$\Leftarrow$} +(0.05,0.3) node {\scriptsize{?}}
    (6.5,-3*\vs) node[right ] {$S_A$, $M_A$, $G_A$ positive semidefinite};
    \path (-0.7,-2*\vs) node {(P3)} (0,-2*\vs) node[right, text width=5cm] {$A=\Psi(C)$ for some \\ general copula $C$}
    (5.5,-2*\vs) node {$\Rightarrow$} +(-0.05,0.3) node {\scriptsize{?}}
    (6.5,-2*\vs) node[right ] {$M_A$, $G_A$ completely positive};
    \draw (-0.8-0.2*\vs,0.7)--(12.6,0.7);
\end{tikzpicture}
    \caption{Summary of the main results of Section~\ref{sec:properties} and two open implications.}
    \label{fig:results}
\end{figure}

Finally, we show that a symmetric checkerboard approximation of a copula in the range of $\Psi$ is again in the range of $\Psi$.

\begin{proposition}\label{prop:3.6}
    Let $C$ be a copula such that $C = \Psi(C_1)$ for some copula $C_1$. Let $\uD$ be a symmetric mesh and $A$ a checkerboard extension of $C|_{\uD}$. Then $A = \Psi(D_1)$ for some copula $D_1$.
\end{proposition}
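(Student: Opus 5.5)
The plan is to construct $D_1$ explicitly by interpolating $C_1$ linearly in the second variable between consecutive points of the mesh. Write $\uD=\delta_2\times\delta_2$ with $\delta_2=\{b_0,\ldots,b_n\}$. For $t\in\II$ and $v\in[b_{j-1},b_j]$, put $s=\tfrac{v-b_{j-1}}{b_j-b_{j-1}}$ and define
$$D_1(t,v)=(1-s)\,C_1(t,b_{j-1})+s\,C_1(t,b_j).$$
This is well defined at the mesh points, since both adjacent formulas give $C_1(t,b_j)$ there. The same linear interpolation then holds for partial derivatives: for almost every $t$,
$$\partial_1 D_1(t,v)=(1-s)\,\partial_1 C_1(t,b_{j-1})+s\,\partial_1 C_1(t,b_j).$$

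First I verify that $D_1$ is a copula.
\begin{enumerate}[(i)]
\item Groundedness follows from $C_1(0,\cdot)=0$ and $C_1(\cdot,0)=0$, using $b_0=0$.
\item For the margin in $t$: $D_1(t,1)=C_1(t,b_n)=t$.
\item For the margin in $v$: $D_1(1,v)$ is the linear interpolation of $C_1(1,b_j)=b_j$, which equals $v$.
\item For $2$-increasingness, I must show $V_{D_1}([t_1,t_2]\times[v_1,v_2])\ge 0$. Since $D_1$ is piecewise linear in $v$, it suffices to consider $v_1<v_2$ inside one cell $[b_{j-1},b_j]$. There the volume equals $\tfrac{v_2-v_1}{b_j-b_{j-1}}\,V_{C_1}([t_1,t_2]\times[b_{j-1},b_j])\ge 0$. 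Rectangles spanning several cells are sums of such volumes.
\end{enumerate}

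Next I compute $\Psi(D_1)$ using \eqref{eq:psi}. Take $u\in[b_{i-1},b_i]$ with parameter $r$ and $v\in[b_{j-1},b_j]$ with parameter $s$. Substituting the interpolation formula for $\partial_1 D_1$ and expanding the product gives
$$\Psi(D_1)(u,v)=(1-r)(1-s)\,c_{i-1,j-1}+r(1-s)\,c_{i,j-1}+(1-r)s\,c_{i-1,j}+rs\,c_{ij},$$
where $c_{kl}=\int_0^1\partial_1C_1(t,b_k)\,\partial_1C_1(t,b_l)\,dt=\Psi(C_1)(b_k,b_l)=C(b_k,b_l)$.

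This is exactly the bilinear formula \eqref{eq:CB} for the checkerboard extension $A$ of $C|_{\uD}$. Hence $\Psi(D_1)=A$.

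The main obstacle is only technical: justifying the almost-everywhere statements, in particular that the interpolation identity for $\partial_1 D_1$ holds simultaneously for all $v$. This is fine because only finitely many functions $\partial_1 C_1(\cdot,b_j)$ are involved, and each exists for almost every $t$. I also note that the symmetry of the mesh is essential, since it lets the same interpolation act in both arguments of $\Psi$.
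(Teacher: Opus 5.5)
Your proposal is correct and follows the paper's proof essentially step for step. As in the paper, $D_1$ is the extension of $C_1|_{\II\times\delta_2}$ that is linear in the second variable on each strip, and expanding $\partial_1 D_1(t,u)\,\partial_1 D_1(t,v)$ gives the bilinear formula \eqref{eq:CB} with coefficients $C(b_k,b_l)$. The only difference is that you check the copula axioms for $D_1$ and the almost-everywhere issue directly, whereas the paper cites the standard subcopula extension used in the proof of Sklar's theorem.
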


\begin{proof}
Suppose $\uD=\delta_2 \times \delta_2$. Let $D_1$ be an extension of $C_1|_{\II \times \delta_2}$ to $\II^2$ that is linear in the second coordinate on every strip $\II \times [b_{j-1},b_j]$, i.e., for $(t,v) \in \II \times [b_{j-1},b_j]$ let
$$D_1(t,v)=\big(1-L_j(v)\big)C_1(t,b_{j-1})+L_j(v)C_1(t,b_j),$$
where
$$L_j(v)=\frac{v-b_{j-1}}{b_j-b_{j-1}}.$$
Then $D_1$ is a copula by standard arguments (in fact, $D_1$ is the classical extension of a subcopula $C_1|_{\II \times \delta_2}$ to a copula, used in the proof of Sklar's theorem, see \cite[Lemma~2.3.5]{Nel06}).
For all $(t,v) \in \II \times [b_{j-1},b_j]$ we have
$$\partial_1 D_1(t,v)=(1-L_j(v))\partial_1 C_1(t,b_{j-1})+L_j(v)\partial_1 C_1(t,b_j),$$
hence, for all $(u,v) \in [b_{i-1},b_i] \times [b_{j-1},b_j]$ we get
\begin{align*}
    \Psi(D_1)(u,v) &=\int_0^1 \partial_1 D_1(t,u) \partial_1 D_1(t,v) dt\\
    &= \int_0^1 \Big(\big(1-L_i(u)\big)\partial_1 C_1(t,b_{i-1})+L_i(u)\partial_1 C_1(t,b_i)\Big) \\
    & \hspace{3cm}\cdot \Big(\big(1-L_j(v)\big)\partial_1 C_1(t,b_{j-1})+L_j(v)\partial_1 C_1(t,b_j)\Big) dt\\
    &= \big(1-L_i(u)\big)\big(1-L_j(v)\big) \int_0^1 \partial_1 C_1(t,b_{i-1})\partial_1 C_1(t,b_{j-1}) dt\\
    &\hspace{1cm} +\big(1-L_i(u)\big)L_j(v) \int_0^1 \partial_1 C_1(t,b_{i-1})\partial_1 C_1(t,b_j) dt\\
    &\hspace{1cm} +L_i(u)\big(1-L_j(v)\big) \int_0^1 \partial_1 C_1(t,b_i)\partial_1 C_1(t,b_{j-1}) dt\\
    &\hspace{1cm} +L_i(u)L_j(v) \int_0^1 \partial_1 C_1(t,b_i)\partial_1 C_1(t,b_j) dt\\
    &=\big(1-L_i(u)\big)\big(1-L_j(v)\big) C(b_{i-1},b_{j-1})+\big(1-L_i(u)\big)L_j(v) C(b_{i-1},b_j)\\
    &\hspace{1cm} +L_i(u)\big(1-L_j(v)\big) C(b_i,b_{j-1}) +L_i(u)L_j(v) C(b_i,b_j).
\end{align*}
The last expression is exactly the formula for the checkerboard extension of $C|_{\uD}$, given in equation~\eqref{eq:CB}, hence, $\Psi(D_1)=A$.
\end{proof}

\section{Lower bound for the diagonal section of a copula in the range of $\Psi$}\label{sec:lower}

In this section we determine the exact lower bound for the diagonal section of a copula $C$ in the range of transformation $\Psi$ under the condition that its value at a single point is known, namely $C(a,a)=d$. We first derive the bound, which is a piecewise function, consisting of four pieces.
As we show later, for points $u$ close to $0$ or $1$ the bound is equal to the global bound for the diagonal of a copula in the range of $\Psi$ (without any constraints on its values), i.e., $C(u,u) \ge u^2$.
In the next proposition we consider the bound for points $u \ge a$, that are close to $a$.

\begin{proposition}\label{prop:spodnja1}
    Let $C$ be a copula such that $C = \Psi(C_1)$ for some copula $C_1$, and let $a \in (0,1)$. Let $d =C(a,a) \in (a^2, a]$ be the value of $C$ at a diagonal point and $a^* = \tfrac{d}{a} \in (a,1]$. Then for any point $u \in [a, a^*]$  we have 
    $$ C(u,u) \ge \frac{d}{d-a^2}(u-a)^2 + d = \frac{a^*}{a^*-a}(u-a)^2 + d.$$
\end{proposition}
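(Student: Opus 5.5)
The plan is to express everything through the partial derivatives of $C_1$ and reduce the claim to a finite-dimensional least-squares (Gram matrix) inequality in $L^2(\II)$. Fix $u\in[a,a^*]$ and put $f(t)=\partial_1 C_1(t,a)$ and $g(t)=\partial_1 C_1(t,u)$. By equation~\eqref{eq:psi} and the boundary conditions of $C_1$ we have
$$\int_0^1 f=C_1(1,a)=a,\qquad \int_0^1 g=u,\qquad \int_0^1 f^2=C(a,a)=d,\qquad \int_0^1 g^2=C(u,u).$$

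The key structural input is that $0\le f\le g$ almost everywhere. This holds because $C_1$ is $2$-increasing and $a\le u$. Indeed, for $t_1<t_2$,
$$C_1(t_2,u)-C_1(t_1,u)\ge C_1(t_2,a)-C_1(t_1,a)\ge 0,$$
so $\partial_1 C_1(t,u)\ge\partial_1 C_1(t,a)\ge 0$ wherever both derivatives exist. Consequently $fg\ge f^2$, and hence
$$e:=\int_0^1 fg\ \ge\ \int_0^1 f^2=d.$$
I will only use $\int g=u$ and $\int fg\ge d$, dropping the remaining constraints $g\le 1$ and $g\ge f$.

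Next I bound $\|g\|_2^2$ from below by the squared norm of the orthogonal projection of $g$ onto $\mathrm{span}\{1,f\}$. The Gram matrix of $\{1,f\}$ is $\begin{bmatrix}1&a\\ a&d\end{bmatrix}$, with determinant $d-a^2>0$, so $1$ and $f$ are linearly independent. The inner products of $g$ with $1$ and $f$ are $u$ and $e$. By Bessel's inequality (equivalently, the Cauchy--Schwarz inequality in the form of a Gram determinant),
$$C(u,u)=\int_0^1 g^2\ \ge\ \frac{d u^2-2aue+e^2}{d-a^2}=:Q(e).$$
As a function of $e$, $Q$ is a convex parabola with minimum at $e=au$. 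The hypothesis $u\le a^*=d/a$ gives $au\le d\le e$, so $Q$ is increasing on $[d,\infty)$ and therefore $Q(e)\ge Q(d)$.

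Finally I expand $Q(d)$:
$$Q(d)=\frac{du^2-2adu+d^2}{d-a^2}=\frac{d(u-a)^2+d(d-a^2)}{d-a^2}=\frac{d}{d-a^2}(u-a)^2+d.$$
Since $\frac{d}{d-a^2}=\frac{a^*}{a^*-a}$, this is exactly the claimed bound.

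The main point to get right is the monotonicity step $f\le g$ together with choosing the right relaxed problem. Keeping only $\int fg\ge d$, rather than the full pointwise constraints, makes the extremal problem a clean projection. It is also where the restriction $u\le a^*$ enters, through the location of the vertex of $Q$. As a sanity check, the bound is attained in the limit by $f=a^*\mathbf 1_{[0,a/a^*]}$ with $g=f+\frac{u-a}{1-a/a^*}\mathbf 1_{(a/a^*,1]}$, which suggests the estimate is sharp.
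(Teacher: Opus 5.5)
Your proof is correct and is essentially the paper's argument: your Bessel/Gram-determinant inequality is exactly $\det S_C\ge 0$ for the value matrix on the mesh $\{0,a,u,1\}$ (note $e=\int_0^1 fg=C(a,u)$), and your observation that $Q$ is increasing on $[au,\infty)\ni d$ is the paper's inequality $(x+d-2au)(x-d)\ge 0$. The differences are only cosmetic: you work directly with the Gram matrix in $L^2(\II)$ instead of passing through the checkerboard approximation of Proposition~\ref{prop:3.6} and Theorem~\ref{thm:psd}, and you derive $e\ge d$ from $0\le f\le g$ a.e.\ rather than simply from $C(a,u)\ge C(a,a)$.
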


\begin{proof}
Let $\uD=\{0, a, u, 1\} \times \{0, a, u, 1\}$ be a symmetric mesh and $A$ a checkerboard extension of $C|_{\uD}$. Then $A = \Psi(D_1)$ for some copula $D_1$ by Proposition \ref{prop:3.6}, so its value matrix $S_A = S_C$ is positive semidefinite by Theorem~\ref{thm:psd}.
We denote by $x = C(a,u) = C(u,a)$ and $y = C(u,u)$.
In this notation we have 
$$ S_C = \begin{bmatrix}
    d & x & a \\
    x & y  & u \\
    a & u & 1 
\end{bmatrix},$$
so 
\begin{equation}\label{eq:detSA}
   \det S_C = dy + 2axu - x^2 - a^2y - du^2 \ge 0. 
\end{equation}
Since $x = C(a,u) \ge C(a,a) = d$ and $d = aa^* \ge au$, we have
$$x+d-2au \ge 2d -2au \ge 0,$$
which implies
\begin{equation}\label{eq:audx}
    (x+d-2au)(x-d) = x^2-2axu +2aud - d^2 \ge 0.
\end{equation}
Adding inequalities \eqref{eq:detSA} and \eqref{eq:audx} we obtain
$$dy - a^2y - du^2 + 2aud - d^2 \ge 0,$$
which is equivalent to 
$$ (y-d)(d-a^2) \ge d(u-a)^2. $$
Since $d > a^2$, this implies
$$ y = C(u,u) \ge \frac{d}{d-a^2}(u-a)^2 + d.$$
which finishes the proof.
\end{proof}

Next, we consider the bound for points $u \le a$, that are close to $a$. We use a reflection argument to reduce this case to the previous one.

\begin{proposition}\label{prop:spodnja2}
    Let $C$ be a copula such that $C = \Psi(C_1)$ for some copula $C_1$ and $a \in (0,1)$. Let $d =C(a,a) \in (a^2, a]$ be the value of $C$ at a diagonal point and $a^{**} = \tfrac{a-d}{1-a} \in [0,a)$. Then for any point $u \in [a^{**}, a]$  we have 
    $$ C(u,u) \ge \frac{d+1-2a}{d-a^2}(u-a)^2 + 2(u-a) + d =\frac{1-a^{**}}{a-a^{**}}(u-a)^2 + 2(u-a) + d.$$
\end{proposition}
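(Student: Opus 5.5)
The plan is to reduce to Proposition~\ref{prop:spodnja1} via the survival copula, using Lemma~\ref{lem:psi_hat}. Put $\widehat{C}=\widehat{\Psi(C_1)}=\Psi(\widehat{C_1})$, so $\widehat{C}$ lies in the range of $\Psi$. Set $b=1-a\in(0,1)$. By equation~\eqref{eq:Chat},
$$e:=\widehat{C}(b,b)=2b-1+C(a,a)=1-2a+d .$$

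First I check that the hypotheses of Proposition~\ref{prop:spodnja1} hold at the point $b$. We need $e\in(b^2,b]$. The condition $e>b^2$ reads $1-2a+d>1-2a+a^2$, which is $d>a^2$. The condition $e\le b$ reads $1-2a+d\le 1-a$, which is $d\le a$. Both hold by assumption. The associated point is
$$b^*=\frac{e}{b}=\frac{1-2a+d}{1-a}=1-\frac{a-d}{1-a}=1-a^{**}.$$
So $u\in[a^{**},a]$ exactly when $1-u\in[b,b^*]$.

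Next I apply Proposition~\ref{prop:spodnja1} to $\widehat{C}$ at the point $w=1-u$. This gives
$$\widehat{C}(w,w)\ge \frac{e}{e-b^2}(w-b)^2+e .$$
Using $\widehat{C}(w,w)=2w-1+C(u,u)$ and $w-b=a-u$, we get
$$C(u,u)\ge \frac{e}{e-b^2}(u-a)^2+e+1-2w .$$
The constant term is $1-2a+d+1-2(1-u)=2(u-a)+d$. Since $e-b^2=d-a^2$, the coefficient is $\frac{d+1-2a}{d-a^2}$. This is the first claimed form.

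Finally, I verify the second form of the coefficient by direct algebra. We have $1-a^{**}=\frac{1-2a+d}{1-a}$ and $a-a^{**}=\frac{a(1-a)-(a-d)}{1-a}=\frac{d-a^2}{1-a}$, so their ratio equals $\frac{d+1-2a}{d-a^2}$.

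I do not expect any real obstacle. The only care needed is in the bookkeeping of the survival transformation: the diagonal of $\widehat{C}$ at $w$ equals $2w-1+C(1-w,1-w)$, and the endpoint $a^{**}$ must correspond exactly to $1-b^*$.
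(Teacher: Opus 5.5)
Your proposal is correct and follows essentially the same route as the paper: pass to $\widehat{C}=\Psi(\widehat{C_1})$ via Lemma~\ref{lem:psi_hat}, apply Proposition~\ref{prop:spodnja1} at $1-a$ with value $1-2a+d$, note that $1-a^{**}$ is the corresponding endpoint, and translate back using $C(u,u)=2u-1+\widehat{C}(1-u,1-u)$. Your explicit check of the second form of the coefficient is a small addition that the paper leaves implicit.
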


\begin{proof}
By Lemma~\ref{lem:psi_hat} we have $\widehat{C} = \Psi(\widehat{C_1})$. Denote $\hat{a} = 1-a$ and $\hat{u} = 1-u$. Furthermore, let 
$$\hat{d} = \widehat{C}(\hat{a},\hat{a}) = \widehat{C}(1-a,1-a) = 1-2a+C(a,a) = 1-2a+d \in (\hat{a}^2, \hat{a}],$$ 
and $a^* = \tfrac{\hat{d}}{\hat{a}}$. We have $\hat{u} \ge \hat{a}$ and $\hat{u} \le 1- a^{**} = \frac{1-2a+d}{1-a} = a^*$, so, by Proposition~\ref{prop:spodnja1},
$$ \widehat{C}(\hat{u},\hat{u}) \ge \frac{\hat{d}}{\hat{d}-\hat{a}^2}(\hat{u}-\hat{a})^2 + \hat{d} = \frac{d+1-2a}{d-a^2}(u-a)^2 + 1-2a+d.$$
It follows that 
\begin{align*}
    C(u,u) &= 2u-1 + \widehat{C}(1-u,1-u) = 2u-1 + \widehat{C}(\hat{u},\hat{u}) \\
    &\ge 2u-1 + \frac{d+1-2a}{d-a^2}(u-a)^2 + 1-2a+d \\
    &= \frac{d+1-2a}{d-a^2}(u-a)^2 + 2(u-a) + d,
\end{align*}
which finishes the proof.
\end{proof}

Our next two propositions demonstrate that the bounds derived in Propositions~\ref{prop:spodnja1} and \ref{prop:spodnja2} are sharp. In Proposition~\ref{prop:real_spodnja1} we construct a copula in the range of $\Psi$ that attains the bound on the interval $[a,1]$ and in Proposition~\ref{prop:real_spodnja2} we find a copula in the range of $\Psi$ that attains the bound on the interval $[0,a]$.

\begin{proposition} \label{prop:real_spodnja1}
    Let $a \in (0,1)$, $d \in (a^2, a]$ and $a^* = \tfrac{d}{a} \in (a,1]$. If $d<a$ then $a^* < 1$ and there exists a checkerboard copula $A$ with splitting points $\uD = \delta_2 \times \delta_2$, where $\delta_2 = \{0, a, a^*, 1\}$, such that $A = \Psi(A_1)$ for some checkerboard copula $A_1$ and its diagonal section is
    $$A(u,u) = \begin{cases}
			\frac{d}{a^2}u^2; & \text{if }0 \le u < a, \\
			\frac{d}{d-a^2}(u-a)^2 + d; & \text{if }a \le u < a^*, \\
			u^2; & \text{if }a^* \le u \le 1. 
    \end{cases}$$
    If $d=a$ then there exists a checkerboard copula $A$ with splitting points $\uD = \delta_2 \times \delta_2$, where $\delta_2 = \{0, a, 1\} $, such that $A = \Psi(A_1)$ for some checkerboard copula $A_1$ and its diagonal section is
    $$A(u,u) = \begin{cases}
			\frac{1}{a}u^2; & \text{if }0 \le u < a, \\
			\frac{1}{1-a}(u-a)^2 + a; & \text{if }a \le u \le 1. 
    \end{cases}$$
\end{proposition}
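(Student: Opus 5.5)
The plan is to build $A$ from an explicit density matrix, check that this matrix is completely positive, invoke Theorem~\ref{thm:cp-psi} to get $A=\Psi(A_1)$, and then read the diagonal section off the bilinear formula \eqref{eq:CB}. The value matrix is dictated by the equality case in the proof of Proposition~\ref{prop:spodnja1} with $u=a^*$. There both \eqref{eq:detSA} and \eqref{eq:audx} must be equalities, which forces $x=C(a,a^*)=d$ and $y=a^{*2}$.

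\textbf{Case $d<a$.} Here $a<a^*<1$ and $\delta_2=\{0,a,a^*,1\}$. I take the value matrix
$$S=\begin{bmatrix} d & d & a\\ d & a^{*2} & a^*\\ a & a^* & 1\end{bmatrix}.$$
Its mass matrix $M=J_3^TSJ_3$ is computed using $d=aa^*$:
$$M=\begin{bmatrix} d & 0 & a(1-a^*)\\ 0 & a^*(a^*-a) & (a^*-a)(1-a^*)\\ a(1-a^*) & (a^*-a)(1-a^*) & (1-a^*)^2\end{bmatrix}.$$
All entries of $M$ are nonnegative, and its row sums are $a$, $a^*-a$, $1-a^*$. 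By Proposition~\ref{prop:matrices}, $S$ is therefore the value matrix of a checkerboard copula $A$. Dividing by the interval lengths gives the density matrix
$$G_A=\begin{bmatrix} a^*/a & 0 & 1\\ 0 & a^*/(a^*-a) & 1\\ 1&1&1\end{bmatrix}.$$

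Next I exhibit a factorization $G_A=H^TH$ with an entrywise nonnegative $H\in\RR^{2\times 3}$. Take rows $(p,0,q)$ and $(0,r,s)$ with
$$p=\sqrt{a^*/a},\quad q=\sqrt{a/a^*},\quad r=\sqrt{a^*/(a^*-a)},\quad s=\sqrt{(a^*-a)/a^*}.$$
Then $p^2=a^*/a$, $r^2=a^*/(a^*-a)$, $pq=rs=1$, the $(1,2)$ entry is $0$, and $q^2+s^2=1$. So $G_A$ is completely positive, and Theorem~\ref{thm:cp-psi} gives a checkerboard $A_1$ with $A=\Psi(A_1)$. The proof of that theorem even constructs $A_1$ explicitly with two strips.

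\textbf{Case $d=a$.} Take $\delta_2=\{0,a,1\}$ and $M=\diag(a,1-a)$, so that $G_A=\diag(1/a,1/(1-a))$. This matrix is trivially completely positive, with $H=G_A^{1/2}$. In fact $A$ is the ordinal sum of two copies of $\Pi$, so it also follows directly from Proposition~\ref{prop:ordinal} together with $\Psi(\Pi)=\Pi$.

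\textbf{Diagonal section.} On each diagonal square $[b_{i-1},b_i]^2$, the bilinear formula \eqref{eq:CB} with $t=s$ gives a quadratic in $u$ interpolating the four corner values of $S$. For $d<a$:
\begin{itemize}
\item On $[0,a]$ the corners are $0,0,0,d$, giving $d u^2/a^2$.
\item On $[a,a^*]$ the corners are $d,d,d,a^{*2}$. This gives $d+(a^{*2}-d)t^2$ with $t=(u-a)/(a^*-a)$. Since $a^{*2}-d=a^*(a^*-a)$, this equals $\frac{a^*}{a^*-a}(u-a)^2+d=\frac{d}{d-a^2}(u-a)^2+d$.
\item On $[a^*,1]$ the corners are $a^{*2},a^*,a^*,1$. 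This gives $a^{*2}+2a^*(1-a^*)t+(1-a^*)^2t^2=u^2$.
\end{itemize}
The case $d=a$ is analogous. On $[0,a]$ one gets $u^2/a$. On $[a,1]$ the corners are $a,a,a,1$, giving $a+(1-a)t^2=a+(u-a)^2/(1-a)$.

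The only real obstacle is guessing the right value matrix, namely the off-diagonal entry $d$, and a nonnegative rank-two factor $H$. Both are supplied by the equality analysis of Proposition~\ref{prop:spodnja1}. Everything after that is routine verification.
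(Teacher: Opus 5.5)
Your proposal is correct and follows the paper's proof essentially step by step: the same value matrix $S$, mass matrix $M=J_3^TSJ_3$ and density matrix $G_A$, then Theorem~\ref{thm:cp-psi}, then a direct computation of the diagonal section. The only difference is how you certify complete positivity: you give the explicit nonnegative factor $H$ with rows $(p,0,q)$ and $(0,r,s)$, whereas the paper checks that $S$ is positive semidefinite and cites the fact that $3\times 3$ doubly nonnegative matrices are completely positive; your version is slightly more self-contained and, through the proof of Theorem~\ref{thm:cp-psi}, constructive (it gives a two-strip $A_1$).
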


\begin{proof}
Suppose first that $d < a$. Let 
$$S = \begin{bmatrix}
    d & d & a \\
    d & (a^*)^2  & a^* \\
    a & a^* & 1 
\end{bmatrix}.$$
We have 
$$
\begin{aligned}
M=J_3^T S J_3 &= \begin{bmatrix}
    d & 0 & a-d \\
    0 & (a^*)^2-d  & a^*+d-a-(a^*)^2 \\
    a-d & a^*+d-a-(a^*)^2 & (1-a^*)^2 
\end{bmatrix} \\
&= \begin{bmatrix}
    d & 0 & a(1-a^*) \\
    0 & \tfrac{d(a^*-a)^2}{d-a^2}  & (a^*-a)(1-a^*) \\
    a(1-a^*) & (a^*-a)(1-a^*) & (1-a^*)^2 
\end{bmatrix}.
\end{aligned}$$
The matrix $M$ is entry-wise nonnegative, 
so matrix $S$ is
the value matrix of some checkerboard copula $A$ with splitting points $\uD$ by Proposition \ref{prop:matrices} $(iii)$. 
The matrix $S$ is positive semidefinite, since its principal $2\times 2$ minors are 
$$d((a^*)^2-d) > 0, \quad (a^*)^2 - (a^*)^2 = 0, \quad d-a^2 > 0,$$
and
$$\det S = d(a^*)^2 +2ada^* - a^2(a^*)^2 - d^2 - d(a^*)^2 = 0, $$
because of $a^*=\frac{d}{a}$. So also the density matrix $G_A$ of the copula $A$ 
$$G_A = \begin{bmatrix}
    \frac{1}{a} & 0 & 0 \\
    0 & \frac{1}{a^*-a}  & 0 \\
    0 & 0 & \frac{1}{1-a^*} 
\end{bmatrix}M\begin{bmatrix}
    \frac{1}{a} & 0 & 0 \\
    0 & \frac{1}{a^*-a}  & 0 \\
    0 & 0 & \frac{1}{1-a^*} 
\end{bmatrix}=\begin{bmatrix}
    \frac{d}{a^2} & 0 & 1 \\
    0 & \frac{d}{d-a^2}  & 1 \\
    1 & 1 & 1 
\end{bmatrix}$$
is positive semidefinite. Since every $3\times 3$ positive semidefinite matrix is completely positive (see Section~\ref{sec:prelim}), this implies that copula $A$ is of the form $A = \Psi(A_1)$ for some checkerboard copula $A_1$ by Theorem \ref{thm:cp-psi}. If $0 \le u < a$ then $A(u,u) = g_{11}u^2 = \frac{d}{a^2}u^2$. If $a \le u < a^*$, we have $A(a,u) \in [A(a,a), A(a,a^*)] = \{d\}$, so 
$$A(u,u) = V_A([a,u]^2) - A(a,a) + 2A(a,u) = g_{22}(a-u)^2 - d + 2d = \frac{d}{d-a^2}(u-a)^2 + d.$$
Finally, if $a^* \le u \le 1$, then 
$$A(u,u) = V_A([u,1]^2) - 1 + 2u = g_{33}(1-u)^2 - 1 + 2u = u^2,$$
so the case $d<a$ is finished.
If $d=a$ let 
$$S = \begin{bmatrix}
    a & a \\
    a & 1 
\end{bmatrix}.$$
Then $S$ is a positive semidefinite matrix with $J_2^T S J_2$ entry-wise nonnegative, so it is a value matrix of a checkerboard copula $A$ with splitting points $\uD = \delta_2 \times \delta_2$, where $\delta_2 = \{0, a, 1\} $, with $A = \Psi(A_1)$ for some checkerboard copula $A_1$, and with diagonal section 
    $$A(u,u) = \begin{cases}
			\frac{1}{a}u^2; & \text{if }0 \le u < a, \\
			\frac{1}{1-a}(u-a)^2 + a; & \text{if }a \le u \le 1, 
    \end{cases}$$
    which completes the proof.
\end{proof}

\begin{proposition} \label{prop:real_spodnja2}
    Let $a \in (0,1)$, $d \in (a^2, a]$ and $a^{**} = \tfrac{a-d}{1-a} \in [0,a)$. If $d < a$ then $a^{**} > 0$  and there exists a checkerboard copula $A$ with splitting points $\uD = \delta_2 \times \delta_2$,  where $\delta_2 = \{0, a^{**},  a, 1\} $, such that $A = \Psi(A_1)$ for some checkerboard copula $A_1$ and its diagonal section is
    $$A(u,u) = \begin{cases}
			u^2; & \text{if }0 \le u < a^{**}, \\
			\frac{d+1-2a}{d-a^2}(u-a)^2 + 2(u-a) + d; & \text{if }a^{**} \le u < a, \\
			\frac{d+1-2a}{(1-a)^2}(u-a)^2 + \frac{2(a-d)}{1-a}(u-a)+d; & \text{if }a \le u \le 1. 
    \end{cases}$$
    If $d=a$ then there exists a checkerboard copula $A$ with splitting points $\uD = \delta_2 \times \delta_2$ where $\delta_2 = \{0, a, 1\} $ such that $A = \Psi(A_1)$ and its diagonal section is
    $$A(u,u) = \begin{cases}
			\frac{1}{a}(u-a)^2 + 2u-a; & \text{if }0 \le u < a, \\
			\frac{1}{1-a}(u-a)^2 + a; & \text{if }a \le u \le 1. 
   \end{cases}$$
\end{proposition}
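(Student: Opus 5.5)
The plan is to obtain this proposition from Proposition~\ref{prop:real_spodnja1} by the survival reflection, in the same way Proposition~\ref{prop:spodnja2} was obtained from Proposition~\ref{prop:spodnja1}.

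\emph{Reflected parameters.} Set $\hat a=1-a\in(0,1)$ and $\hat d=1-2a+d$. Then
$$\hat d-\hat a^2=d-a^2>0 \qquad\text{and}\qquad \hat a-\hat d=a-d\ge 0,$$
so $\hat d\in(\hat a^2,\hat a]$, and $\hat d<\hat a$ holds exactly when $d<a$. Moreover
$$\hat a^*=\frac{\hat d}{\hat a}=\frac{1-2a+d}{1-a}=1-a^{**}.$$

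\emph{Construction.} Proposition~\ref{prop:real_spodnja1} applied to $(\hat a,\hat d)$ gives a checkerboard copula $A'=\Psi(A_1')$. In the case $d<a$ its splitting points are $\{0,1-a,1-a^{**},1\}^2$; in the case $d=a$ they are $\{0,1-a,1\}^2$. In both cases its diagonal section is known explicitly. Define $A=\widehat{A'}$ and $A_1=\widehat{A_1'}$. Then:
\begin{itemize}
\item by Lemma~\ref{lem:psi_hat}, $A=\widehat{\Psi(A_1')}=\Psi(\widehat{A_1'})=\Psi(A_1)$;
\item by Lemma~\ref{lem:checkerboard_hat}, both $A$ and $A_1$ are checkerboard copulas;
\item the splitting points of $A$ are the reflections $\{0,a^{**},a,1\}^2$, respectively $\{0,a,1\}^2$;
\item $a^{**}>0$ exactly when $d<a$.
\end{itemize}

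\emph{Diagonal computation for $d<a$.} We use $A(u,u)=2u-1+A'(1-u,1-u)$ and split by the position of $1-u$.
\begin{itemize}
\item For $u\in[0,a^{**})$ we have $1-u\in(\hat a^*,1]$, so $A'(1-u,1-u)=(1-u)^2$ and $A(u,u)=u^2$.
\item For $u\in[a^{**},a)$, the middle piece of $A'$ gives exactly the algebra already carried out in the proof of Proposition~\ref{prop:spodnja2}. This yields $\frac{d+1-2a}{d-a^2}(u-a)^2+2(u-a)+d$.
\item For $u\in[a,1]$ we have $1-u\in[0,\hat a]$, so $A(u,u)=2u-1+\frac{\hat d}{\hat a^2}(1-u)^2$. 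Write $1-u=(1-a)-(u-a)$ and expand. The coefficient of $(u-a)$ becomes $2-\frac{2(1-2a+d)}{1-a}=\frac{2(a-d)}{1-a}$, the constant term is $d$, and the quadratic coefficient is $\frac{d+1-2a}{(1-a)^2}$. This matches the stated formula.
\end{itemize}

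\emph{Diagonal computation for $d=a$.} Here $\hat d=\hat a$, and $A'$ has diagonal $\frac{1}{\hat a}u^2$ on $[0,\hat a)$ and $\frac{1}{1-\hat a}(u-\hat a)^2+\hat a$ on $[\hat a,1]$.
\begin{itemize}
\item For $u<a$, the point $1-u$ lies in the second piece. This gives $A(u,u)=2u-1+\frac1a(a-u)^2+1-a=\frac1a(u-a)^2+2u-a$.
\item For $u\ge a$, the point $1-u$ lies in the first piece. This gives $A(u,u)=2u-1+\frac{(1-u)^2}{1-a}$. Expanding with $1-u=(1-a)-(u-a)$ yields $\frac1{1-a}(u-a)^2+a$.
\end{itemize}

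\emph{Main obstacle.} Nothing here is deep. The only points needing care are the half-open interval endpoints under the reflection and the algebraic identification of the third piece. Since $A$ is continuous, the endpoint conventions are harmless.
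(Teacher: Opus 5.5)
Your proposal is correct and follows essentially the same route as the paper: you apply Proposition~\ref{prop:real_spodnja1} to $(1-a,\,1-2a+d)$, pass to the survival copula via Lemmas~\ref{lem:checkerboard_hat} and~\ref{lem:psi_hat}, and compute $A(u,u)=2u-1+A'(1-u,1-u)$ piece by piece. You are slightly more explicit than the paper in two places: you justify that $A=\Psi(\widehat{A_1'})$ with $\widehat{A_1'}$ a checkerboard copula, and you work out the $d=a$ case in full, which the paper leaves as ``similar arguments''.
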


\begin{proof}
Suppose that $d < a$ and let $\hat{a} = 1-a$, $\hat{d} = 1-2a+d$, and $a^* = \tfrac{\hat{d}}{\hat{a}}$. Then $\hat{d} \in (\hat{a}^2, \hat{a})$, so, by Proposition~\ref{prop:real_spodnja1}, there exists a checkerboard copula $B$ with splitting points $\widehat{\uD} = \delta_2' \times \delta_2'$, where $\delta_2' = \{0, \hat{a}, a^*, 1\} $, such that $B = \Psi(B_1)$ for some checkerboard copula $B_1$ and its diagonal section is
    $$B(u,u) = \begin{cases}
			\frac{\hat{d}}{\hat{a}^2}u^2; & \text{if }0 \le u < \hat{a}, \\
			\frac{\hat{d}}{\hat{d}-\hat{a}^2}(u-\hat{a})^2 + \hat{d}; & \text{if }\hat{a} \le u < a^*, \\
			u^2; & \text{if }a^* \le u \le 1. 
    \end{cases}$$
By Lemma~\ref{lem:checkerboard_hat}, it follows that its survival copula $A = \widehat{B}$ is a checkerboard copula with splitting points $\uD = \delta_2 \times \delta_2$, where $\delta_2 = \{0, 1-a^*, a, 1\} $, such that $A = \Psi(A_1)$ for some checkerboard copula $A_1$. Note that $1-a^* = \tfrac{a-d}{1-a} = a^{**}$. The diagonal section of $A$ is 
\begin{align*}
    A(u,u) &= 2u-1 + B(1-u,1-u) \\
    &= 2u-1 + \begin{cases}
			\frac{\hat{d}}{\hat{a}^2}(1-u)^2; & \text{if }0 \le 1-u < \hat{a}, \\
			\frac{\hat{d}}{\hat{d}-\hat{a}^2}(1-u-\hat{a})^2 + \hat{d}; & \text{if }\hat{a} \le 1-u < a^*, \\
			(1-u)^2; & \text{if }a^* \le 1-u \le 1 
    \end{cases} \\
    &= \begin{cases}
			u^2; & \text{if }0 \le u < a^{**}, \\
			\frac{d+1-2a}{d-a^2}(u-a)^2 + 2(u-a) + d; & \text{if }a^{**} \le u < a, \\
			\frac{d+1-2a}{(1-a)^2}(u-a)^2 + \frac{2(a-d)}{1-a}(u-a)+d; & \text{if }a \le u \le 1.
    \end{cases}
\end{align*}
If $d=a$, similar arguments as above show the existence of a checkerboard copula $A$ with splitting points $\uD = \delta_2 \times \delta_2$, where $\delta_2 = \{0, a, 1\} $, with $A = \Psi(A_1)$ for some checkerboard copula $A_1$, and the diagonal section simplifies to
    $$A(u,u) = \begin{cases}
			\frac{1}{a}(u-a)^2 + 2u-a; & \text{if }0 \le u < a, \\
			\frac{1}{1-a}(u-a)^2 + a; & \text{if }a \le u \le 1, 
   \end{cases}$$
so the proof is complete.
\end{proof}

We collect our findings in our main theorem of this section.

\begin{theorem}\label{thm:main_lower}
    Let $C$ be a copula such that $C = \Psi(C_1)$ for some copula $C_1$ and $a \in (0,1)$. If $A(a,a) = a^2$, then $A(u,u) \ge u^2$ for any $u \in \II$. If $d =C(a,a) \in (a^2, a]$ is the value of $C$ at a diagonal point, let $a^* = \tfrac{d}{a} \in (a,1]$ and $a^{**} = \tfrac{a-d}{1-a} \in [0,a)$. Then the lower bound for the diagonal section of $C$ is 
    $$C(u,u) \ge \begin{cases}
			\frac{d+1-2a}{d-a^2}(u-a)^2 + 2(u-a) + d; & \text{if }a^{**} \le u \le a, \\
			\frac{d}{d-a^2}(u-a)^2 + d; & \text{if }a \le u \le a^*, \\
			u^2; & \text{otherwise}. 
    \end{cases}$$
    Every point on the lower bound is attained by some checkerboard copula $A$ of the form $A = \Psi(A_1)$ for some checkerboard copula $A_1$.
\end{theorem}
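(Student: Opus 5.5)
The plan is to assemble the bound from Propositions~\ref{prop:spodnja1} and \ref{prop:spodnja2}, handle the ``otherwise'' region with the global bound, and then read off sharpness from Propositions~\ref{prop:real_spodnja1} and \ref{prop:real_spodnja2}.

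\textbf{The degenerate case $C(a,a)=a^2$.} Here the statement (written with $A$ in place of $C$) asks only for $C(u,u)\ge u^2$ for all $u$. This is the global bound of \cite{Fu24}. It can also be seen directly. For the mesh $\{0,u,1\}$, Proposition~\ref{prop:3.6} and Theorem~\ref{thm:psd} make the value matrix $\begin{bmatrix} y & u\\ u & 1\end{bmatrix}$ positive semidefinite, where $y=C(u,u)$. Hence $y\ge u^2$. This argument holds for every $u$, so it also covers the ``otherwise'' region in the general case.

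\textbf{The case $d\in(a^2,a]$.} For $u\in[a,a^*]$, Proposition~\ref{prop:spodnja1} gives exactly the middle piece. For $u\in[a^{**},a]$, Proposition~\ref{prop:spodnja2} gives the first piece. For $u<a^{**}$ or $u>a^*$, we use $C(u,u)\ge u^2$ from the previous step.

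\textbf{Attainment.} We must show that every point on the lower bound is attained by some checkerboard copula in the range of $\Psi$. I read this as: for each $u$, some such copula with $A(a,a)=d$ satisfies $A(u,u)=$ bound$(u)$.
\begin{itemize}
\item For $u\in[a,1]$, take the copula from Proposition~\ref{prop:real_spodnja1}. Its diagonal equals the middle piece on $[a,a^*]$ and equals $u^2$ on $[a^*,1]$. One should check that $A(a,a)=d$; at $u=a$ the middle formula gives $d$. When $d=a$ we have $a^*=1$, and the second formula covers $[a,1]$.
\item For $u\in[0,a]$, use Proposition~\ref{prop:real_spodnja2} in the same way. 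Its diagonal is $u^2$ on $[0,a^{**}]$ and the first piece on $[a^{**},a]$. When $d=a$ we have $a^{**}=0$, and the formula $\frac1a(u-a)^2+2u-a$ agrees with the first piece, since $\frac{d+1-2a}{d-a^2}=\frac{1-a}{a(1-a)}=\frac1a$.
\end{itemize}

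The degenerate case $d=a^2$ still needs a witness. The product copula $\Pi=\Psi(\Pi)$ is a checkerboard copula with the trivial mesh, and its diagonal is $u^2$ everywhere. This handles every $u$ at once.

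\textbf{Main obstacle.} Only bookkeeping remains. First, confirm that the pieces of the bound agree at the breakpoints $a^{**},a,a^*$, so that the closed-interval statements are consistent. Second, confirm that the extremal copulas of Propositions~\ref{prop:real_spodnja1} and \ref{prop:real_spodnja2} satisfy the constraint $A(a,a)=d$. For example, at $u=a^*$ the middle piece gives $\frac{a^*}{a^*-a}(a^*-a)^2+d=a^*(a^*-a)+aa^*=(a^*)^2$, which matches $u^2$.
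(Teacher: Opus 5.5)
Your proposal is correct and follows the paper's proof essentially step for step. As in the paper, you assemble the bound from Propositions~\ref{prop:spodnja1} and \ref{prop:spodnja2} together with the global bound $C(u,u)\ge u^2$, and you get sharpness from Propositions~\ref{prop:real_spodnja1} and \ref{prop:real_spodnja2}, with $\Pi=\Psi(\Pi)$ as the witness in the degenerate case. The only differences are minor and both are sound. First, where the paper simply cites \cite{Fu24} for $C(u,u)\ge u^2$, you derive it directly: on the mesh $\{0,u,1\}$, Proposition~\ref{prop:3.6} and Theorem~\ref{thm:psd} make the $2\times 2$ value matrix positive semidefinite. Second, you explicitly check that the middle piece equals $(a^*)^2$ at $u=a^*$, which is exactly what is needed for attainment at that endpoint.
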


\begin{proof}
Note that, by \cite[Corollary~1]{Fu24}, the bound $C(u,u) \ge u^2$ is valid for any copula of the form $C = \Psi(C_1)$ for some copula $C_1$. If $A(a,a) = a^2$, then the bound $C(u,u)=u^2$ is attained by the product copula $\Pi(u,v) = uv$, which satisfies $\Pi = \Psi(\Pi)$.

If $d =C(a,a) \in (a^2, a]$ then Propositions \ref{prop:spodnja1} and \ref{prop:spodnja2} prove the bound for $u \in [a, a^*]$ and $u \in [a^{**},a]$.
If $u \notin [a^{**},a^*]$, the bound is valid for any copula of the form $C = \Psi(C_1)$. Proposition \ref{prop:real_spodnja1} demonstrates that the bound is attained by some checkerboard copula $A$ of the form $A = \Psi(A_1)$ for some checkerboard copula $A_1$ for any $u \in [a,1]$, while Proposition \ref{prop:real_spodnja2} shows that it is attained in the same way for any $u \in [0, a]$. 
\end{proof}

It is interesting to note that the bound in Theorem~\ref{thm:main_lower} has a nice geometric interpretation as explained in the following remark.
\begin{remark}
The function 
$$r(u) = \frac{d}{d-a^2}(u-a)^2 + d$$
satisfies $r(a) = d$, $r'(a) = 0$, $r(a^*) = (a^*)^2$ and $r'(a^*) = 2a^*$. It is the unique quadratic function with the properties $r(a) = d$ and $r'(a) = 0$ which matches with $a^2$ and its derivative at some point in the interval $[a,1]$. Similarly,  the function 
$$s(u) = \frac{d+1-2a}{d-a^2}(u-a)^2 + 2(u-a) + d$$
satisfies $s(a) = d$, $s'(a) = 2$, $s(a^{**}) = (a^{**})^2$ and $s'(a^{**}) = 2a^{**}$. It is the unique quadratic function with the properties $s(a) = d$ and $s'(a) = 2$ which matches with $a^2$ and its derivative at some point in the interval $[0,a]$.
\end{remark}

\begin{remark}
Note that the lower bound of the diagonal section of any copula $C = \Psi(C_1)$ with $C(a,a) \in (a^2, a]$ is attained for all points in the interval $[a,1]$ simultaneously, i.e., with the same copula. Similarly, it is attained for all points in the interval $[0,a]$ simultaneously. But there does not exist a copula 
$C = \Psi(C_1)$ with diagonal section equal to the bound on the whole interval $\II$, unless $C(a,a)=a^2$ or $C(a,a)=a$.
\end{remark}

We conclude the section with an example, demonstrating our result.

\begin{example}\label{ex:lower}
 Let $a = \tfrac25$ and $d=\tfrac{8}{25}\in[\tfrac{4}{25},\tfrac25]$. Then $a^* = \tfrac{d}{a} = \tfrac45$ and $a^{**} = \tfrac{a-d}{1-a} = \tfrac{2}{15}$. The lower bound for the diagonal section of any copula $C$ of the form $C = \Psi(C_1)$ for some copula $C_1$, satisfying the condition $C(a,a) = d$ is
    $$C(u,u) \ge \begin{cases}
			\tfrac{13}{4} u^2 - \frac35 u + \tfrac{1}{25}; & \text{if }\tfrac{2}{15} \le u \le \tfrac25, \\
			2 u^2 - \frac85 u + \tfrac{16}{25}; & \text{if }\tfrac25 \le u \le \tfrac45, \\
			u^2; & \text{otherwise}. 
    \end{cases}$$
The checkerboard copula $A$ of the form $A = \Psi(A_1)$ for some copula $A_1$ that attains the lower bound for all $u \in [\tfrac25,1]$ has splitting points $\uD = \delta_2 \times \delta_2$, where $\delta_2 = \{0, \tfrac25, \tfrac45, 1\}$ and the density matrix
$$G_A = \begin{bmatrix}
    2 & 0 & 1 \\
    0 & 2  & 1 \\
    1 & 1 & 1 
\end{bmatrix},$$
while the checkerboard copula $B$ of the form $B = \Psi(B_1)$ for some copula $B_1$ that attains the lower bound for all $u \in [0,\tfrac25]$ has splitting points $\uD' = \delta'_2 \times \delta'_2$, where $\delta'_2 = \{0, \tfrac{2}{15}, \tfrac25, 1\}$ and the density matrix
$$G_B = \begin{bmatrix}
    1 & 1 & 1 \\
    1 & \tfrac{13}{4}  & 0 \\
    1 & 0 & \tfrac{13}{9} 
\end{bmatrix}.$$
Figure \ref{fig:lower} shows the graph of the lower bound together with diagonal sections of copulas $M$ and $\Pi$, diagonal sections of copulas $A$ and $B$, and their densities as heatmaps, where darker shades represent higher densities. 
\end{example}

\begin{figure}[ht]
\centering
\def\a{2/5}
\def\d{8/25}
\def\aa{4/5}
\def\aaa{2/15}
\begin{tikzpicture}[scale=4]
\draw[->] (-0.05,0) -- (1.1,0) node[right] {$u$};
\draw[->] (0,-0.05) -- (0,1.1);
\draw (\a,0.01) -- (\a,-0.01) node[below]{$a$};
\draw (\aa,0.01) -- (\aa,-0.01) node[below]{$a^*$};
\draw (\aaa,0.01) -- (\aaa,-0.01) node[below]{$a^{**}$};
\draw (1,0.01) -- (1,-0.01) node[below]{$1$};
\draw (0.01,\d) -- (-0.01,\d) node[left]{$d$};
\draw (0.01,\aa*\aa) -- (-0.01,\aa*\aa) node[left]{$(a^*)^2$};
\draw (0.01,\aaa*\aaa) -- (-0.01,\aaa*\aaa) node[left]{$(a^{**})^2$};
\draw (0.01,1) -- (-0.01,1) node[left]{$1$};
\draw[-] (0,0) -- (1,1);
\draw[domain=0:1,samples=100] plot (\x,\x*\x);
\draw[black, very thick,domain=0:\aaa,samples=100] plot (\x,\x*\x);
\draw[black, very thick,domain=\aa:1,samples=100] plot (\x,\x*\x);
\draw[black, very thick,domain=\aaa:\a,samples=100] plot (\x,13/4*\x*\x  - 3/5*\x + 1/25);
\draw[black, very thick,domain=\a:\aa,samples=100] plot (\x,2*\x*\x -8/5*\x + 16/25);
\fill (\a,\d) circle (0.01) (\aa,\aa*\aa) circle (0.01) (\aaa,\aaa*\aaa) circle (0.01);
\end{tikzpicture} 
\hspace{1cm}
\begin{tikzpicture}[scale=4]
\draw[->] (-0.05,0) -- (1.1,0) node[right] {$u$};
\draw[->] (0,-0.05) -- (0,1.1);
\draw (\a,0.01) -- (\a,-0.01) node[below]{$a$};
\draw (\aa,0.01) -- (\aa,-0.01) node[below]{$a^*$};
\draw (\aaa,0.01) -- (\aaa,-0.01) node[below]{$a^{**}$};
\draw (1,0.01) -- (1,-0.01) node[below]{$1$};
\draw (0.01,\d) -- (-0.01,\d) node[left]{$d$};
\draw (0.01,\aa*\aa) -- (-0.01,\aa*\aa) node[left]{$(a^*)^2$};
\draw (0.01,\aaa*\aaa) -- (-0.01,\aaa*\aaa) node[left]{$(a^{**})^2$};
\draw (0.01,1) -- (-0.01,1) node[left]{$1$};
\draw[-] (0,0) -- (1,1);
\draw[domain=0:1,samples=100] plot (\x,\x*\x);
\draw[blue, very thick,domain=0:\a,samples=100] plot (\x,2*\x*\x);
\draw[blue, very thick,domain=\aa:1,samples=100] plot (\x,\x*\x);
\draw[blue, very thick,domain=\a:\aa,samples=100] plot (\x,2*\x*\x -8/5*\x + 16/25);
\fill[blue] (\a,\d) circle (0.01);
\fill[blue] (\aa,\aa*\aa) circle (0.01);
\draw[red, very thick,domain=\a:1,samples=100] plot (\x,13/9*\x*\x - 8/9*\x + 4/9);
\draw[red, very thick,domain=\aaa:\a,samples=100] plot (\x,13/4*\x*\x  - 3/5*\x + 1/25);
\draw[red, very thick,domain=0:\aaa,samples=100] plot (\x,\x*\x);
\fill[red] (\a,\d) circle (0.01);
\fill[red] (\aaa,\aaa*\aaa) circle (0.01);
\end{tikzpicture}
\\[5mm]
\begin{tikzpicture}[scale=4]
\filldraw[black!32] (0,0) -- (0,\a) -- (\a,\a) -- (\a,0) -- cycle;
\filldraw[black!32] (\a,\a) -- (\aa,\a) -- (\aa,\aa) -- (\a,\aa) -- cycle;
\filldraw[black!16] (0,\aa) -- (0,1) -- (1,1) -- (1,\aa) -- cycle;
\filldraw[black!16] (\aa,0) -- (1,0) -- (1,1) -- (\aa,1) -- cycle;
\draw[thick] (0,0) -- (1,0) -- (1,1) -- (0,1) -- cycle;
\draw (\a,1) -- (\a,0) node[below]{$a$};
\draw (\aa,1) -- (\aa,0) node[below]{$a^*$};
\draw (1,1) -- (1,0) node[below]{$1$};
\draw (1,\a) -- (0,\a) node[left]{$a$};
\draw (1,\aa) -- (0,\aa) node[left]{$a^*$};
\draw (1,1) -- (0,1) node[left]{$1$};
\end{tikzpicture}
\hspace{2cm}
\begin{tikzpicture}[scale=4]
\filldraw[black!23] (1,1) -- (1,\a) -- (\a,\a) -- (\a,1) -- cycle;
\filldraw[black!52] (\aaa,\aaa) -- (\a,\aaa) -- (\a,\a) -- (\aaa,\a) -- cycle;
\filldraw[black!16] (0,\aaa) -- (1,\aaa) -- (1,0) -- (0,0) -- cycle;
\filldraw[black!16] (\aaa,0) -- (\aaa,1) -- (0,1) -- (0,0) -- cycle;
\draw[thick] (0,0) -- (1,0) -- (1,1) -- (0,1) -- cycle;
\draw (\a,1) -- (\a,0) node[below]{$a$};
\draw (\aaa,1) -- (\aaa,0) node[below]{$a^{**}$};
\draw (1,1) -- (1,0) node[below]{$1$};
\draw (1,\a) -- (0,\a) node[left]{$a$};
\draw (1,\aaa) -- (0,\aaa) node[left]{$a^{**}$};
\draw (1,1) -- (0,1) node[left]{$1$};
\end{tikzpicture}
    \caption{The graph of the lower bound together with diagonal sections of copulas $M$ and $\Pi$ (top left), diagonal sections of copulas $A$ in blue and $B$ in red (top right) from Example \ref{ex:lower}, and densities of copulas $A$ (bottom left) and $B$ (bottom right) as heatmaps.}
    \label{fig:lower}
\end{figure}

\section{Upper bound for the diagonal section of a copula in the range of $\Psi$}\label{sec:upper}

In this section we determine the exact upper bound for the diagonal section of a copula $C$ in the range of transformation $\Psi$ under the condition that its value at a single point is known.
In the previous section the point, where the value of the diagonal section of $C$ was known, was denoted by $a$, while the function variable of the bound was denoted by $u$.
In this this section we reverse the notation -- we consider the constraint $C(u,u)=w$, where $u$ is a given point, and denote the function variable of the bound by $a$. This is done intentionally, since in the crucial step of the proof of the bound (Proposition~\ref{prop:zgornja1}) we will reverse the roles of the points $a$ and $u$ and use Proposition~\ref{prop:spodnja1}.

We first introduce a function $f$ which will later give part of the upper bound, and derive some of its properties that we will need in the proofs.

\begin{lemma} \label{function_f}
Let $u \in (0,1)$, $w \in [u^2, u]$ and $u^* = \tfrac{w-u^2}{w+1-2u}$. Let a function $f \colon [0, u] \to [0,\infty)$ be defined by
\begin{align}
    f(a)=f_{u,w}(a) &= a^2+\tfrac14 \left(\sqrt{w- (u-2a)^2} + \sqrt{w- u^2}\right)^2 \label{eq:f} \\
    &= au + \tfrac12(w-u^2) +\sqrt{(w-u^2)(a(u-a)+\tfrac14(w-u^2))} \nonumber \\
    &= au + \tfrac12(w-u^2) + \tfrac12\sqrt{(w-u^2)(w- (u-2a)^2)}. \nonumber 
\end{align} 
Then $u^* \in [0,u]$,  $f(0) = w- u^2$, $f(u) = w$, $f(u^*) = u^*$, $f$ is twice differentiable, increasing, concave, $f^\prime(a) \in [0,1)$ for every $a \in [u^*, u]$, and $u-a-w+f(a) \ge 0$ for every $a \in [u^*, u]$.
\end{lemma}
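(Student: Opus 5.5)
The plan is to reduce everything to elementary calculus in two auxiliary quantities,
$$e = w-u^2 \ge 0 \qquad\text{and}\qquad q(a) = w-(u-2a)^2 = e + 4a(u-a).$$
Since $a(u-a)\ge 0$ on $[0,u]$, we have $q(a)\ge e\ge 0$ there, so $f$ is well defined.

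\emph{The three formulas for $f$.} Expanding the square in \eqref{eq:f} gives $a^2+\tfrac14\bigl(q+e+2\sqrt{qe}\bigr)$. Substituting $q/4=e/4+a(u-a)$ turns this into $au+\tfrac12 e+\tfrac12\sqrt{eq}$, which is the third form. The second form is the same expression with $q/4$ written out.

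\emph{Boundary values and the location of $u^*$.} At $a=0$ and $a=u$ we have $q=e$, so $f(0)=e=w-u^2$ and $f(u)=u^2+e=w$. For $u^*$, write $D=w+1-2u=e+(1-u)^2$, so that $u^*=e/D$.
\begin{itemize}
\item $u^*\ge 0$ is clear. The inequality $u^*\le u$ is equivalent to $w-u^2\le u(w+1-2u)$, i.e. $w(1-u)\le u(1-u)$, which is exactly $w\le u$.
\item The equation $f(u^*)=u^*$ is equivalent to $\sqrt{e\,q(u^*)}=2u^*(1-u)-e$. The right-hand side simplifies to $e(1-w)/D\ge 0$.
\item After squaring, it remains to check the polynomial identity $q(u^*)=e(1-w)^2/D^2$, i.e. $4e\bigl(uD-e\bigr)=(1-w)^2-D^2$ after clearing denominators. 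This is routine algebra in $u,w$.
\end{itemize}
The degenerate case $e=0$ is treated separately: there $f(a)=au$ and $u^*=0$, and all claims are immediate.

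\emph{Regularity, concavity and monotonicity.} For $e>0$ we have $q\ge e>0$ on $[0,u]$, so $f$ is smooth there. Its derivative is
$$f'(a)=u+\frac{\sqrt e\,(u-2a)}{\sqrt{q(a)}}.$$
Concavity follows because $q$ is a concave quadratic and the square root of a nonnegative concave function is concave, while $au$ is linear. For monotonicity, $f'\ge 0$ is trivial when $a\le u/2$. When $a>u/2$, squaring and using $4a(u-a)=u^2-(u-2a)^2$ reduces the claim to $(e+u^2)(u-2a)^2\le u^2(e+u^2)$, i.e. $(u-2a)^2\le u^2$, which holds on $[0,u]$.

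\emph{The bound $f'<1$ on $[u^*,u]$ and the final inequality.} By concavity $f'$ is decreasing, so it suffices to show $f'(u^*)<1$. Using the value of $\sqrt{q(u^*)}$ computed above,
$$f'(u^*)=u+\frac{uD-2e}{1-w}.$$
So the claim reduces to the polynomial inequality $uD-2e<(1-u)(1-w)$, which I expect to be the main computational obstacle. I would attack it by substituting $D=e+(1-u)^2$ and $w=e+u^2$ and factoring. The boundary cases $w=u$ (where $u^*=u$ and the interval is a single point) and $u^*=0$ must be checked separately, since the strict inequality may degenerate there. Finally, set $g(a)=u-a-w+f(a)$. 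Then $g(u)=0$ and $g'=f'-1<0$ on $[u^*,u]$, so $g$ is decreasing on that interval and hence $g\ge 0$ on $[u^*,u]$.
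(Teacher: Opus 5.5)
Your argument is correct in substance, and it differs from the paper's mainly in how the two nontrivial facts are obtained.

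\textbf{How the paper proceeds.} It parametrizes $w=(1-t)u^2+tu$, so that $w-u^2=tu(1-u)$ and $u^*=tu/(1-(1-t)u)$, and evaluates $f(u^*)$ directly in that parameter. It gets concavity from the explicit formula $f''(a)=-2w\sqrt{w-u^2}\,(w-(u-2a)^2)^{-3/2}$. It gets monotonicity from $f'(u)=0$ together with $f'$ decreasing, which is shorter than your squaring argument. Most importantly, it never computes $f'(u^*)$. Instead it applies concavity to $f_0(a)=f(a)-a$, which satisfies $f_0(0)=w-u^2>0$ and $f_0(u^*)=0$. So the chord of $f_0$ over $[0,u^*]$ has negative slope, and hence $f_0'<0$ on $[u^*,u]$. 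This recycles the fixed-point identity $f(u^*)=u^*$ and removes what you call the main obstacle.

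\textbf{What your route buys.} You check $u^*\le u$ directly as $w\le u$, get concavity by composing $\sqrt{\cdot}$ with the concave quadratic $q$, and evaluate $f'(u^*)$ explicitly. This is more computational, but it yields the closed form $f'(u^*)=1-D/(1-w)$.

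\textbf{Two points need fixing.}

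First, the polynomial identity you display has a spurious factor $e$. Since $q(u^*)=e+4e(uD-e)/D^2$, the condition $q(u^*)=e(1-w)^2/D^2$ becomes $4(uD-e)=(1-w)^2-D^2$ after dividing by $e>0$. Both sides equal $4(1-u)(u-w)$, because $uD-e=(1-u)(u-w)$, $1-w-D=2(u-w)$ and $1-w+D=2(1-u)$. The version $4e(uD-e)=(1-w)^2-D^2$ is false in general: for $u=\tfrac12$, $w=\tfrac38$ the left side is $\tfrac1{32}$ and the right side is $\tfrac14$.

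Second, the inequality you left open is immediate:
$(1-u)(1-w)-(uD-2e)=(1-u)(1-w)-(1-u)(u-w)+e=(1-u)^2+e=D>0$.
Hence $f'(u^*)=1-D/(1-w)$, which lies in $[0,1)$ for all $u\in(0,1)$ and $w\in(u^2,u]$. So no separate boundary analysis is needed. At $w=u$ the formula gives $f'(u^*)=f'(u)=0$, and $u^*=0$ occurs only when $e=0$, which you already handled.

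With these corrections your proof is complete.
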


\begin{proof}
A short calculation reveals that all three forms of function $f$ are equal. Let us use the abbreviation $\hat{x} = 1 - x$ for each $x \in [0,1]$.
Since $w \in [u^2, u]$, there is a number $t \in [0,1]$ such that $w = (1-t)u^2+tu$. It follows that
$$w-u^2 = t u (1-u) = t u \hat{u},$$
so that 
$$ u^* = \frac{w-u^2}{w-u^2+(1-u)^2} = \frac{t u \hat{u}}{t u \hat{u}+\hat{u}^2} = 
\frac{t u}{t u +\hat{u}} = \frac{t u}{t u +1-u} = \frac{t u}{1- \hat{t} u}.$$
Since $1- \hat{t} u \ge 1- \hat{t} = t$, we obtain that $u^* \in [0,u]$.
Now, we have 
\begin{align*}
    f(0) &= \tfrac14 \left(2 \sqrt{w- u^2}\right)^2= w-u^2, \text{ and} \\
    f(u) &= u^2+\tfrac14 \left(2 \sqrt{w- u^2}\right)^2 = w.
\end{align*}
Since 
$$ u- u^* = u - \frac{t u}{1- \hat{t} u} = \frac{u- \hat{t} u^2 - t u}{1- \hat{t} u} = 
\frac{\hat{t} u \hat{u}}{1-\hat{t} u} , $$
we obtain from the second form of $f$
\begin{align*}
     f(u^*) &= \frac{tu^2}{1-\hat{t} u}+ \frac12 t u \hat{u} + \sqrt{tu\hat{u} \left( \frac{t u}{1- \hat{t} u} \cdot \frac{\hat{t} u \hat{u}}{1-\hat{t} u}+\frac14 tu\hat{u}\right)} \\
     &= \frac{tu(2u+\hat{u}-\hat{t}u\hat{u})}{2(1-\hat{t} u)}+\frac{t u \hat{u}}{2(1-\hat{t} u)} \sqrt{4 \hat{t} u + (1-\hat{t}u)^2} \\
     &= \frac{tu}{2(1-\hat{t}u)}\left( 2u+\hat{u}-\hat{t}u\hat{u} + \hat{u} (1+\hat{t} u)\right) \\
     &= \frac{t u}{1- \hat{t} u} = u^*. 
\end{align*}
If $w=u^2$ then $f(a) = ua$, so $f'(a) = u \in (0,1)$, $f''(a) = 0$, and 
$$u-a-w+f(a) = u-a -u^2 +ua = (1-u)(u-a)\ge 0$$
for every $a \le u$.

From now on we suppose that $w > u^2$. We differentiate $f$ in the third form, to obtain that 
$$ f^\prime (a) = u + \sqrt{w- u^2} \, \frac{u-2a}{\sqrt{w-(u-2a)^2}} $$
and 
$$ f^{\prime \prime} (a) = - \frac{2 w \sqrt{w- u^2}}{(w-(u-2a)^2)^{3/2}} . $$
Clearly, $f^{\prime \prime} (a) < 0$ for all $a \in [0,u]$, and so the function $f$ is concave,
and the function $f^\prime$ is decreasing.
Since $f^\prime(u) = 0$, we have $f^\prime (a) \ge 0$ for all $a \in [0,u]$, so that 
the function $f$ is increasing. 

Define a function $f_0 \colon [0, u] \to \mathbb{R}$ by $f_0(a) = f(a) - a$. 
Since $f_0^{\prime \prime} (a) = f^{\prime \prime} (a) < 0$ for all $a \in [0,u]$, the function $f_0$ is also concave. 
Since $f_0(0) = f(0) = w- u^2 > 0$, $f_0(u) = f(u) - u = w- u \le 0$ and 
$f_0(u^*) = f(u^*) - u^* = 0$, the concavity of $f_0$ implies that the number $u^*$ is the only zero of $f_0$ and 
$f_0^\prime (a) < 0$ for all $a \in [u^*, u]$. It follows that 
$f^\prime(a)= f_0^\prime(a) + 1 \in [0,1)$ for all $a \in [u^*, u]$.

Finally, define a function $h \colon [u^*, u] \to \mathbb{R}$ by $h(a) = u-a-w+f(a)$.
Then $h^\prime(a) = f^\prime (a) - 1 < 0$ for all $a \in [u^*, u]$ and 
$h(u) = 0$, so that $h(a) \ge 0$ for all $a \in [u^*, u]$.
This completes the proof of the lemma.
\end{proof}

The other crucial part of the upper bound will be given by function $g$, which is obtained from function $f$ by means of a specific reflection. The definition and properties of function $g$ are given in our next lemma.

\begin{lemma} \label{function_g}
Let $u \in (0,1)$, $w \in [u^2, u]$ and $u^{**} = \tfrac{u^2}{w} \in [u,1]$. Let a function $g \colon [u, 1] \to [0,\infty)$ be defined by
\begin{equation} \label{eq:g}
    g(a) =g_{u,w}(a)= au + a - u + \tfrac12(w-u^2) +\sqrt{(w-u^2)((1-a)(a-u)+\tfrac14(w-u^2))}.
\end{equation}
Then $g(u) = w$, $g(u^{**}) = u^{**}$, $g$ is twice differentiable, increasing, $2$-Lipschitz, concave, $g'(a) \in [1,2]$ for every $a \in [u, u^{**}]$, and $u-a-w+g(a) \ge 0$ for every $a \in [u, u^{**}]$.
\end{lemma}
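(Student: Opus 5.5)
The plan is to reduce everything to Lemma~\ref{function_f} through the survival reflection, mirroring how Proposition~\ref{prop:spodnja2} was obtained from Proposition~\ref{prop:spodnja1}.

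\textbf{Reflected parameters and the reflection identity.} Put $\hat u = 1-u$ and $\hat w = 1-2u+w$. These are the parameters of $\widehat{C}$ at the diagonal point $\hat u$ if $C(u,u)=w$. First I check that $\hat u\in(0,1)$ and $\hat w\in[\hat u^2,\hat u]$. Indeed, $\hat w-\hat u^2 = w-u^2\ge 0$ and $\hat u-\hat w = u-w\ge 0$. So Lemma~\ref{function_f} applies to $f_{\hat u,\hat w}$ on $[0,\hat u]$. The key identity to verify, using the second form of $f$ in \eqref{eq:f}, is
$$ g_{u,w}(a) = 2a-1+f_{\hat u,\hat w}(1-a), \qquad a\in[u,1]. $$
To check it, write $\hat a=1-a$. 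Then $\hat u-\hat a = a-u$, and $\hat a\hat u = 1-a-u+au$, and $\hat w-\hat u^2=w-u^2$. Substituting, the square-root term becomes exactly $\sqrt{(w-u^2)((1-a)(a-u)+\tfrac14(w-u^2))}$. The linear part becomes $2a-1+1-a-u+au = au+a-u$, which matches \eqref{eq:g}. The map $a\mapsto 1-a$ sends $[u,1]$ onto $[0,\hat u]$.

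\textbf{Reflected critical point.} Next I compute the point $\hat u^{*}$ for the reflected parameters. The denominator is $\hat w+1-2\hat u = w$, so
$$\hat u^* = \frac{\hat w-\hat u^2}{\hat w+1-2\hat u} = \frac{w-u^2}{w} = 1-u^{**}.$$
Hence $u^{**}=1-\hat u^*\in[u,1]$, since $\hat u^*\in[0,\hat u]$ by Lemma~\ref{function_f}. Moreover, $a\in[u,u^{**}]$ if and only if $\hat a\in[\hat u^*,\hat u]$.

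\textbf{Transferring the properties.} Each property of $g$ now follows from the corresponding property of $f=f_{\hat u,\hat w}$.
\begin{itemize}
\item Endpoint values: $g(u)=2u-1+f(\hat u)=2u-1+\hat w=w$, and $g(u^{**})=2u^{**}-1+f(\hat u^*) = 2u^{**}-1+\hat u^* = u^{**}$.
\item Smoothness and concavity: $g$ is twice differentiable with $g'(a)=2-f'(\hat a)$ and $g''(a)=f''(\hat a)\le 0$, so $g$ is concave.
\item Slope on $[u,u^{**}]$: since $f'(\hat a)\in[0,1)$ for $\hat a\in[\hat u^*,\hat u]$, we get $g'(a)\in(1,2]\subseteq[1,2]$ there.
\item Last inequality: a direct computation gives $u-a-w+g(a)=\hat u-\hat a-\hat w+f(\hat a)$, which is $\ge 0$ on $[u,u^{**}]$ by the last assertion of Lemma~\ref{function_f}.
\end{itemize}

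\textbf{Main obstacle.} The only point not directly supplied by Lemma~\ref{function_f} is that $g$ is increasing and $2$-Lipschitz on the whole interval $[u,1]$, not just on $[u,u^{**}]$. This requires $0\le f'\le 2$ on all of $[0,\hat u]$. The lower bound $f'\ge 0$ is in Lemma~\ref{function_f}, which gives $g'\le 2$. For the upper bound I will use that $f'$ is decreasing, by concavity, so $f'\le f'(0)$. From the derivative formula in the proof of Lemma~\ref{function_f}, $f'(0)=\hat u+\sqrt{\hat w-\hat u^2}\,\hat u/\sqrt{\hat w-\hat u^2}=2\hat u\le 2$ when $\hat w>\hat u^2$. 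In the degenerate case $w=u^2$ we have $f'\equiv\hat u$. In both cases $g'\ge 0$, which finishes the proof.
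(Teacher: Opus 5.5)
Your proposal is correct and follows essentially the same route as the paper. The paper likewise proves the reflection identity $g_{u,w}(a)=2a-1+f_{1-u,\,w+1-2u}(1-a)$ and shows $1-\hat u^{*}=u^{**}$, then transfers the properties from Lemma~\ref{function_f} but omits the details. Your write-up fills those details in, and it correctly notes the one step that is not a purely formal transfer: monotonicity on all of $[u,1]$ needs the extra bound $f'\le f'(0)=2\hat u$, taken from the derivative formula in the proof of Lemma~\ref{function_f} rather than from its stated conclusions.
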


\begin{proof}
We apply the survival transformation to the function $f$ from Lemma \ref{function_f}.
Let $\hat{a}=1-a$, $\hat{u}=1-u$, and $\hat{w}=w+1-2u$. Then  $\hat{w}-\hat{u}^2=(w+1-2u) - (1-u)^2 = w- u^2$. 
Note that
\begin{align*}
    g_{u,w}(a) &= 2a-1+(1-a)(1-u)+ \tfrac12(w-u^2) + \sqrt{(w-u^2)((1-a)(a-u)+\tfrac14(w-u^2))} \\
    &= 2a-1+\hat{a}\hat{u}+ \tfrac12(\hat{w}-\hat{u}^2) + \sqrt{(\hat{w}-\hat{u}^2)(\hat{a}(\hat{u}-\hat{a})+\tfrac14(\hat{w}-\hat{u}^2))} \\
    &=2a-1+f_{\hat{u},\hat{w}}(\hat{a})= 2a-1+f_{1-u,w+1-2u}(1-a).
\end{align*}
Furthermore, let $u^* = \tfrac{\hat{w}-\hat{u}^2}{\hat{w}+1-2\hat{u}}$. Then 
\begin{equation}\label{eq:1-u*}
1-u^*=1-\frac{w+1-2u-(1-u)^2}{w+1-2u + 1- 2(1-u)} = 1-\frac{w-u^2}{w} = \frac{u^2}{w}=u^{**} .
\end{equation}
Now, the properties of the function $g$ follow easily from the corresponding properties 
of the function $f$ from Lemma \ref{function_f}, and so we omit the details.
\end{proof}

In next two propositions we prove that the functions $f$ and $g$, defined in \eqref{eq:f} and \eqref{eq:g}, are upper bounds for the diagonal section of a copula $C$ in the range of $\Psi$ satisfying the constraint $C(u,u)=w$.

\begin{proposition} \label{prop:zgornja1}
    Let $C$ be a copula such that $C = \Psi(C_1)$ for some copula $C_1$, and let $u \in (0,1)$. Let $w =C(u,u) \in [u^2, u]$ be the value of $C$ at a diagonal point. Then for any point $a \in [0, u]$  we have 
    $$ C(a,a) \le f(a) = a^2+\tfrac14 \left(\sqrt{w- (u-2a)^2} + \sqrt{w- u^2}\right)^2. $$
\end{proposition}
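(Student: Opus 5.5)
The idea is to swap the roles of the two diagonal points and use Proposition~\ref{prop:spodnja1} with the \emph{variable} point $a$ as the base point and the \emph{fixed} point $u$ as the evaluation point. Fix $a\in[0,u]$ and put $d=C(a,a)$. We know $d\in[a^2,a]$, by \cite[Corollary~1]{Fu24} and the Fr\'echet--Hoeffding bound. We need to show $d\le f(a)$.

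First we dispose of the easy cases.
\begin{itemize}
\item If $a=0$, then $d=0\le f(0)=w-u^2$.
\item If $d=a^2$, then $d\le f(a)$ directly from the first form of $f$ in \eqref{eq:f}, since the added square is nonnegative.
\item If $d\le au$, then $d\le f(a)$ from the second form of $f$, because $f(a)=au+\tfrac12(w-u^2)+\sqrt{\cdots}\ge au$.
\end{itemize}
So we may assume $a\in(0,u]$ and $d>\max\{a^2,au\}$. Then $a^*=d/a>u$, so $u\in[a,a^*]$, and Proposition~\ref{prop:spodnja1} gives
$$w=C(u,u)\ge \frac{d}{d-a^2}(u-a)^2+d .$$

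Since $d>a^2$, this is equivalent to $(w-d)(d-a^2)\ge d(u-a)^2$. We rewrite it as a quadratic inequality in $d$:
$$d^2-B\,d+wa^2\le 0,\qquad B=w+a^2-(u-a)^2=w-u^2+2au .$$
Consequently $d$ lies below the larger root, so
$$d\le \tfrac12\Bigl(B+\sqrt{B^2-4wa^2}\Bigr).$$
The discriminant is nonnegative because a feasible $d$ exists.

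It remains to identify this root with $f(a)$. A direct expansion shows
$$B^2-4wa^2=(w-u^2)^2+4au(w-u^2)-4a^2(w-u^2)=(w-u^2)\bigl(w-(u-2a)^2\bigr).$$
Hence the larger root equals $au+\tfrac12(w-u^2)+\tfrac12\sqrt{(w-u^2)(w-(u-2a)^2)}$, which is the third form of $f(a)$ in Lemma~\ref{function_f}. Thus $C(a,a)\le f(a)$.

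The only genuinely delicate point is the applicability of Proposition~\ref{prop:spodnja1}: it needs $d>a^2$ and $u\le d/a$. This is exactly why the case split $d\le au$ versus $d>au$ is made, and why the trivial inequality $f(a)\ge au$ is needed. Everything else is the algebraic verification of the discriminant identity.
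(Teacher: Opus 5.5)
Your proof is correct and follows essentially the same route as the paper: you dispose of the case $d\le au$ via $f(a)\ge au$, apply Proposition~\ref{prop:spodnja1} with the roles of $a$ and $u$ swapped, and identify the larger root of $d^2-(2au+w-u^2)d+wa^2\le 0$ with $f(a)$. The only cosmetic difference is in the case $d=a^2$: you handle it directly via $f(a)\ge a^2$, whereas the paper observes that then $a=u$ and uses $f(u)=w$.
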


\begin{proof}
Let $a \le u$ and denote $d = C(a,a)$ and $a^* = \tfrac{d}{a}$. If $u > a^*$ then $d < au$, so 
\begin{align*}
C(a,a) = d < au &\le au + \tfrac12(w-u^2) + \tfrac12\sqrt{(w-u^2)(w- (u-2a)^2)} \\
&= f(a) = a^2+\tfrac14 \left(\sqrt{w- (u-2a)^2} + \sqrt{w- u^2}\right)^2.
\end{align*}

Suppose now that $u \le a^*$. If $d = a^2$ then $a^* = a = u$ and $C(a,a) = C(u,u) = w = f(u)= f(a)$ by Lemma \ref{function_f}.
If $d > a^2$ then we have by Proposition \ref{prop:spodnja1}
$$w = C(u,u) \ge \frac{d}{d-a^2}(u-a)^2 + d ,$$
which is equivalent to 
$$ d^2 - (2au + w - u^2)d + wa^2 \le 0.$$
This implies
\begin{align*}
C(a,a) = d &\le \tfrac12(2au + w - u^2) +  \sqrt{\tfrac14(2au + w - u^2)^2 - wa^2} \\
&= au + \tfrac12(w-u^2) +\tfrac12\sqrt{(w-u^2)(w- (u-2a)^2)} \\
&= f(a) = a^2+\tfrac14 \left(\sqrt{w- (u-2a)^2} + \sqrt{w- u^2}\right)^2
\end{align*}
by Lemma \ref{function_f}.
\end{proof}

\begin{proposition} \label{prop:zgornja2}
    Let $C$ be a copula such that $C = \Psi(C_1)$ for some copula $C_1$ and $u \in (0,1)$. Let $w =C(u,u) \in [u^2, u]$ be the value of $C$ at a diagonal point. Then for any point $a \in [u, 1]$  we have 
    $$ C(a,a) \le g(a) = au + a - u + \tfrac12(w-u^2) +\sqrt{(w-u^2)(au-a^2+a-u+\tfrac14(w-u^2))}. $$
\end{proposition}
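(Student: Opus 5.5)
The plan is to reduce the statement to Proposition~\ref{prop:zgornja1} via the survival transformation, exactly as Lemma~\ref{function_g} was obtained from Lemma~\ref{function_f}.

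By Lemma~\ref{lem:psi_hat}, $\widehat{C}=\Psi(\widehat{C_1})$, so $\widehat{C}$ is again in the range of $\Psi$. Put $\hat u=1-u\in(0,1)$ and $\hat a=1-a$. For $a\in[u,1]$ we have $\hat a\in[0,\hat u]$. By \eqref{eq:Chat},
$$\hat w:=\widehat{C}(\hat u,\hat u)=2\hat u-1+C(u,u)=w+1-2u.$$
We must check that $\hat w\in[\hat u^2,\hat u]$, which is the admissibility condition in Proposition~\ref{prop:zgornja1}. This holds because $\hat w-\hat u^2=w-u^2\ge 0$ and $\hat u-\hat w=u-w\ge 0$.

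Proposition~\ref{prop:zgornja1}, applied to $\widehat{C}$ with the point $\hat u$ and the value $\hat w$, then gives $\widehat{C}(\hat a,\hat a)\le f_{\hat u,\hat w}(\hat a)$ for all $\hat a\in[0,\hat u]$.

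Next we translate back. By \eqref{eq:Chat}, $C(a,a)=2a-1+\widehat{C}(1-a,1-a)$, so
$$C(a,a)\le 2a-1+f_{1-u,\,w+1-2u}(1-a).$$
The computation in the proof of Lemma~\ref{function_g} shows that the right-hand side equals $g_{u,w}(a)$, using the second form of $f$ in \eqref{eq:f} together with $\hat w-\hat u^2=w-u^2$ and $\hat a(\hat u-\hat a)=(1-a)(a-u)$.

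It remains to confirm that $(1-a)(a-u)=au-a^2+a-u$. This identifies the expression in the statement with \eqref{eq:g} and completes the argument.

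There is no real obstacle here. The only point needing care is checking that the reflected data $(\hat u,\hat w)$ satisfy the hypotheses of Proposition~\ref{prop:zgornja1}, including the edge cases $w=u^2$ and $w=u$. In both edge cases the hypothesis still holds and the formula remains valid.
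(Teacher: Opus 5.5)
Your proposal is correct and follows the paper's proof: pass to $\widehat{C}=\Psi(\widehat{C_1})$ via Lemma~\ref{lem:psi_hat}, apply Proposition~\ref{prop:zgornja1} with the reflected data $(\hat u,\hat w)=(1-u,\,w+1-2u)$, and translate back via $C(a,a)=2a-1+\widehat{C}(1-a,1-a)$ and the identity $g_{u,w}(a)=2a-1+f_{1-u,w+1-2u}(1-a)$ from the proof of Lemma~\ref{function_g}. You also spell out the admissibility checks $\hat w-\hat u^2=w-u^2\ge 0$ and $\hat u-\hat w=u-w\ge 0$, which the paper only asserts.
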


\begin{proof}
By Lemma~\ref{lem:psi_hat} we have $\widehat{C} = \Psi(\widehat{C_1})$. Denote $\hat{a} = 1-a$ and $\hat{u} = 1-u$. Furthermore, let 
$$\hat{w} = \widehat{C}(\hat{u},\hat{u}) = 1-2u+w \in [\hat{u}^2, \hat{u}].$$ 
We have $\hat{a} \in [0, \hat{u}]$, so by Proposition~\ref{prop:zgornja1}
$$ \widehat{C}(\hat{a},\hat{a}) \le \hat{a}^2+\tfrac14 \left(\sqrt{\hat{w}- (\hat{u}-2\hat{a})^2} + \sqrt{\hat{w}- \hat{u}^2}\right)^2=f_{\hat{u},\hat{w}}(\hat{a}).$$
It follows that 
\begin{align*}
    C(a,a) = 2a-1 + \widehat{C}(\hat{a},\hat{a}) \le 2a-1 + f_{\hat{u},\hat{w}}(\hat{a})= g(a)
\end{align*}
by the proof of Lemma \ref{function_g}.
\end{proof}

Next, we prove that the upper bound given by function $f$ is sharp on the interval $[u^*,u]$. For any point in $a \in [u^*,u)$ we explicitly construct a copula $A$ in the range of $\Psi$ such that $A(a,a)=f(a)$.

\begin{proposition} \label{prop:real_zgornja1}
    Let $u \in (0,1)$, $w \in [u^2, u]$,  $u^* = \tfrac{w-u^2}{w+1-2u} \in [0,u]$ and $a \in [u^*, u)$. Then there exists a checkerboard copula $A$ with splitting points $\uD = \delta_2 \times \delta_2$, with $\delta_2 = \{0, a, u, 1\} $, such that $A = \Psi(A_1)$ for some checkerboard copula $A_1$, $A(u,u) = w$ and $A(a,a) = f(a)$.
\end{proposition}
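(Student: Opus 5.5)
The idea is to turn the inequality chain in the proof of Proposition~\ref{prop:spodnja1} into equalities. That proof (with the roles of $a$ and $u$ as in Proposition~\ref{prop:zgornja1}) used two inequalities: $\det S_C\ge 0$ and $(x+d-2au)(x-d)\ge 0$, where $x=C(a,u)$ and $d=C(a,a)$. Both become equalities if we take $x=d$ and make the value matrix singular. With $d:=f(a)$, I would therefore define
$$S=\begin{bmatrix} d & d & a\\ d & w & u\\ a & u & 1\end{bmatrix}$$
on the symmetric mesh with $\delta_2=\{0,a,u,1\}$. I would then show three things: $S$ is a value matrix (Proposition~\ref{prop:matrices}~$(iii)$), $S$ is positive semidefinite, and the resulting density matrix is completely positive, so that Theorem~\ref{thm:cp-psi} applies. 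By construction $A(a,a)=s_{11}=f(a)$ and $A(u,u)=s_{22}=w$.

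\textbf{Step 1 (value matrix).} Compute $M=J_3^TSJ_3$. Its entries are $m_{11}=d$, $m_{12}=0$, $m_{13}=a-d$, $m_{22}=w-d$, $m_{23}=u-a-w+d$ and $m_{33}=1-2u+w$. Each is nonnegative by Lemma~\ref{function_f}:
\begin{itemize}
\item $a-f(a)\ge 0$ for $a\in[u^*,u]$, because $f_0=f-\mathrm{id}$ is concave with unique zero at $u^*$ and $f_0(u)\le 0$;
\item $w-f(a)\ge 0$, because $f$ is increasing and $f(u)=w$;
\item $u-a-w+f(a)\ge 0$ on $[u^*,u]$, which is stated in the lemma;
\item $1-2u+w\ge (1-u)^2\ge 0$, because $w\ge u^2$.
\end{itemize}
The marginal conditions $s_{i3}=b_i$ and $s_{3j}=b_j$ hold by construction.

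\textbf{Step 2 (positive semidefiniteness).} The diagonal entries of $S$ are nonnegative. The principal $2\times2$ minors are:
\begin{itemize}
\item $dw-d^2=d(w-d)\ge0$;
\item $d-a^2\ge 0$, since $f(a)\ge a^2$ is immediate from the first form in \eqref{eq:f};
\item $w-u^2\ge0$.
\end{itemize}
For the determinant, with $x=d$ we get
$$\det S=dw+2adu-d^2-a^2w-du^2=-\bigl(d^2-(2au+w-u^2)d+wa^2\bigr).$$
The proof of Proposition~\ref{prop:zgornja1} shows that $f(a)$ is the larger root of this quadratic, so $\det S=0$. Hence $S$ is positive semidefinite. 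Then $M$ and $G_A=D_{\delta_2}^{-1}MD_{\delta_2}^{-1}$ are positive semidefinite and entrywise nonnegative. Since they are $3\times 3$, they are completely positive, and Theorem~\ref{thm:cp-psi} gives $A=\Psi(A_1)$ for a checkerboard copula $A_1$.

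\textbf{Main obstacle: degenerate cases.} The main care point is that the mesh must be genuine, i.e.\ $0<a<u<1$. The condition $a<u$ is assumed. Only $a=0$ is problematic, and this forces $u^*=0$, i.e.\ $w=u^2$. In that case $f(0)=0$, and I would use the two-point mesh $\{0,u,1\}$ with $S=\begin{bmatrix}u^2&u\\u&1\end{bmatrix}$, or simply the copula $\Pi=\Psi(\Pi)$, which satisfies $\Pi(u,u)=u^2=w$ and $\Pi(0,0)=0=f(0)$. Apart from this, the substantive content is the sign checks in Step~1, which rely exactly on the inequalities collected in Lemma~\ref{function_f}.
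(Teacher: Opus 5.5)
Your proposal is correct and follows the paper's proof essentially step for step. It uses the same singular value matrix $S$ with $C(a,u)=C(a,a)=f(a)$, the same entrywise sign checks on $J_3^TSJ_3$ via Lemma~\ref{function_f}, the same principal-minor and determinant argument (with $f(a)$ the larger root of the quadratic), and Theorem~\ref{thm:cp-psi} via complete positivity of $3\times 3$ nonnegative positive semidefinite matrices. The differences are minor. You get $a-f(a)\ge 0$ from the unique zero of $f_0$ (a fact from the proof of the lemma), whereas the paper uses the mean value theorem with $f'<1$. You also handle the degenerate case $a=0$ (possible only when $w=u^2$) via $\Pi=\Psi(\Pi)$, which the paper does not treat.
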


\begin{proof}
Let
$$S = \begin{bmatrix}
    f(a) & f(a) & a \\
    f(a) & w  & u \\
    a & u & 1 
\end{bmatrix}.$$
We have 
$$M=J_3^T S J_3 = \begin{bmatrix}
    f(a) & 0 & a-f(a) \\
    0 & w-f(a)  & u-a-w+f(a) \\
    a-f(a) & u-a-w+f(a) & w+1-2u 
\end{bmatrix}.$$
Note that, by Lagrange's theorem,
$$a-f(a) =a-u^*-(f(a)-f(u^*))=a-u^*-f'(\xi)(a-u^*) \ge 0,$$
where $\xi \in [u^*,a]$, since $f'(\xi)< 1$ by Lemma~\ref{function_f}.
Furthermore,
$$w-f(a) = f(u) - f(a) \ge 0,  \quad  u-a-w+f(a) \ge 0$$ 
by Lemma \ref{function_f}, and $w+1-2u \ge u^2+1-2u > 0$. Hence, the matrix $M$ is entry-wise nonnegative and $S$ is
a value matrix of some checkerboard copula $A$ with splitting points $\uD$ by Proposition \ref{prop:matrices} $(iii)$. 
The matrix $S$ is positive semidefinite, since
$$f(a)(w-f(a)) \ge 0, \quad f(a) - a^2 \ge 0, \quad w-u^2 \ge 0,$$
and
$$\det S = wf(a) + 2auf(a) - u^2f(a) - a^2w - f(a)^2 = 0, $$
because $f(a)$ is the larger zero of the quadratic equation $x^2 - (2au+w-u^2)x + a^2w =0$. Similarly as in the proof of Proposition \ref{prop:real_spodnja1} this implies that copula $A$ is of the form $A = \Psi(A_1)$ for some checkerboard copula $A_1$ by Theorem \ref{thm:cp-psi}, satisfying $A(a,a) = f(a)$ and $A(u,u) = w$.
\end{proof}

Our next proposition shows that the upper bound given by function $g$ is sharp on the interval $[u,u^{**}]$. We again explicitly construct a copula that attains it at a single point.

\begin{proposition} \label{prop:real_zgornja2}
    Let $u \in (0,1)$, $w \in [u^2, u]$,  $u^{**} = \tfrac{u^2}{w} \in [u,1]$ and $a \in (u, u^{**}]$.  Then there exists a checkerboard copula $A$ with splitting points $\uD = \delta_1 \times \delta_1$, with $\delta_1 = \{0, u, a, 1\} $, such that $A = \Psi(A_1)$ for some checkerboard copula $A_1$, $A(u,u) = w$ and $A(a,a) = g(a)$.
\end{proposition}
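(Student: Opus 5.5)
The plan is to reduce the statement to Proposition~\ref{prop:real_zgornja1} via the survival transformation, in the same way Proposition~\ref{prop:real_spodnja2} was obtained from Proposition~\ref{prop:real_spodnja1}.

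First set $\hat{u}=1-u$, $\hat{w}=w+1-2u$ and $\hat{a}=1-a$. Then $\hat{w}\in[\hat{u}^2,\hat{u}]$, because $\hat{w}-\hat{u}^2=w-u^2\ge 0$ and $\hat{u}-\hat{w}=u-w\ge 0$.

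Next, let $\hat{u}^*=\frac{\hat{w}-\hat{u}^2}{\hat{w}+1-2\hat{u}}$. Equation~\eqref{eq:1-u*} gives $1-\hat{u}^*=u^{**}$. So the hypothesis $a\in(u,u^{**}]$ becomes exactly $\hat{a}\in[\hat{u}^*,\hat{u})$. If $u^{**}=u$ (that is, $w=u^2$), the interval is empty and there is nothing to prove. Also $\hat u\in(0,1)$.

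Proposition~\ref{prop:real_zgornja1} then provides a checkerboard copula $B$ with:
\begin{itemize}
\item splitting points $\{0,\hat{a},\hat{u},1\}\times\{0,\hat{a},\hat{u},1\}$;
\item $B=\Psi(B_1)$ for a checkerboard copula $B_1$;
\item $B(\hat{u},\hat{u})=\hat{w}$ and $B(\hat{a},\hat{a})=f_{\hat{u},\hat{w}}(\hat{a})$.
\end{itemize}

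Define $A=\widehat{B}$. By Lemma~\ref{lem:checkerboard_hat}, $A$ is a checkerboard copula with splitting points $\{0,1-\hat{u},1-\hat{a},1\}=\{0,u,a,1\}$ in both coordinates. By Lemma~\ref{lem:psi_hat}, $A=\widehat{\Psi(B_1)}=\Psi(\widehat{B_1})$, and $A_1:=\widehat{B_1}$ is again a checkerboard copula, once more by Lemma~\ref{lem:checkerboard_hat}.

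Finally we compute values with \eqref{eq:Chat}. We get $A(u,u)=2u-1+B(\hat{u},\hat{u})=2u-1+w+1-2u=w$. We also get $A(a,a)=2a-1+f_{\hat{u},\hat{w}}(\hat{a})$, which equals $g(a)$ by the identity established in the proof of Lemma~\ref{function_g}.

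The main point to check carefully is that the parameter ranges transform correctly, in particular that the half-open interval $(u,u^{**}]$ maps onto $[\hat{u}^*,\hat{u})$. The rest is bookkeeping.

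An alternative would be a direct construction with value matrix
$$S=\begin{bmatrix} w & w & u\\ w & g(a) & a\\ u & a & 1\end{bmatrix}.$$
One would then verify entrywise nonnegativity of $J_3^TSJ_3$ using Lemma~\ref{function_g}, and positive semidefiniteness, and conclude via complete positivity of $3\times3$ matrices together with Theorem~\ref{thm:cp-psi}. I would use this route only if the reflection argument ran into trouble.
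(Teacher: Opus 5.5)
Your proof is correct and is essentially the paper's argument: reflect via $\hat u=1-u$, $\hat w=w+1-2u$, $\hat a=1-a$, use \eqref{eq:1-u*} to get $\hat a\in[\hat u^*,\hat u)$, apply Proposition~\ref{prop:real_zgornja1} to obtain $B$, and take $A=\widehat{B}$ via Lemmas~\ref{lem:checkerboard_hat} and~\ref{lem:psi_hat}. You spell out the use of Lemma~\ref{lem:psi_hat} more explicitly than the paper does. Two side remarks are wrong, though neither affects the argument: $u^{**}=u$ corresponds to $w=u$, not $w=u^2$; and your fallback matrix $S$ would not work, since the $(2,3)$ entry of $J_3^TSJ_3$ is $a-u+w-g(a)$, which is $\le 0$ by Lemma~\ref{function_g} and strictly negative for $a>u$. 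The reflected copula actually has $A(u,a)=u-a+g(a)$, so its zero mass sits on $[u,a]\times[a,1]$ rather than on $[0,u]\times[u,a]$.
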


\begin{proof}
Similarly as in the proof of Proposition \ref{prop:real_spodnja2} let $\hat{u} = 1-u$, $\hat{a} = 1-a$, $\hat{w} = 1-2u+w$, and $u^* = \tfrac{\hat{w}-\hat{u}^2}{\hat{w}+1-2\hat{u}}$. Then $\hat{w} \in [\hat{u}^2, \hat{u}]$ and 
$ 1- u^*=u^{**},$ by equation~\eqref{eq:1-u*},
so $\hat{a} \in [u^*,\hat{u})$. Proposition~\ref{prop:real_zgornja1} implies there exists a checkerboard copula $B$ with splitting points $\widehat{\uD} = \delta_2' \times \delta_2'$, where $\delta_2' = \{0, \hat{a}, \hat{u}, 1\} $, such that $B = \Psi(B_1)$ for some checkerboard copula $B_1$,
$$B(\hat{a},\hat{a}) = \hat{a}^2+\tfrac14 \left(\sqrt{\hat{w}- (\hat{u}-2\hat{a})^2} + \sqrt{\hat{w}- \hat{u}^2}\right)^2$$ 
and $B(\hat{u},\hat{u}) = \hat{w}$. Let $A=\widehat{B}$. Then, by Lemma~\ref{lem:checkerboard_hat}, $A$ a checkerboard copula with splitting points $\uD = \delta_1 \times \delta_1$, where $\delta_1 = \{0, u, a, 1\} $, such that $A = \Psi(A_1)$ for some checkerboard copula $A_1$, $A(u,u) = w$ and $A(a,a) = 2a-1 + B(\hat{a},\hat{a}) = g(a)$.
\end{proof}

Finally, we show that at every point $a$ outside the interval $[u^*,u^{**}]$, a copula $C$ in the range of $\Psi$ that satisfies $C(u,u)=w$ can attain the global bound for the diagonal section, i.e. $C(a,a)=a$.

\begin{proposition} \label{prop:real_zgornja3}
    Let $u \in (0,1)$, $w \in [u^2, u]$,  $u^* = \tfrac{w-u^2}{w+1-2u} \in [0,u]$ and $u^{**} = \tfrac{u^2}{w} \in [u,1]$. Then there exists a copula $C$ such that $C = \Psi(C_1)$ for some copula $C_1$, $C(a,a) = a$ for every $a \in [0, u^*]$, and $C(u,u) = w$. Furthermore, there exists a copula $D$ such that $D = \Psi(D_1)$ for some copula $D_1$, $D(a,a) = a$ for every $a \in [u^{**},1]$, and $D(u,u) = w$.
\end{proposition}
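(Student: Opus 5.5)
The plan is to build both copulas as ordinal sums, using Proposition~\ref{prop:ordinal} together with the facts $\Psi(\Pi)=\Pi$ and $\Psi(M)=M$. Outside the squares of an ordinal sum the copula coincides with $M$, which gives $C(a,a)=a$ there. Inside a single square we place $\Pi$, and we choose the square so that the rescaled value of $\Pi$ at $(u,u)$ is exactly $w$.

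\emph{First copula.} Take the single interval $J=(u^*,1)$ and let $C_1$ be the ordinal sum of $\{\Pi\}$ with respect to $\{J\}$. By Proposition~\ref{prop:ordinal}, $C=\Psi(C_1)$ is the ordinal sum of $\{\Psi(\Pi)\}=\{\Pi\}$ with respect to $\{J\}$, so $C=C_1$ and $C(a,a)=M(a,a)=a$ for $a\in[0,u^*]$. Note that $u^*<1$, since $u^*\le u<1$. It remains to verify
$$u^*+(1-u^*)s^2=w, \qquad s=\frac{u-u^*}{1-u^*}.$$
For this we use $w+1-2u-(w-u^2)=(1-u)^2$, which gives
\[1-u^*=\frac{(1-u)^2}{w+1-2u},\quad u-u^*=\frac{(1-u)(u-w)}{w+1-2u},\quad w-u^*=\frac{(w-u)^2}{w+1-2u}.\]
Hence $s=\frac{u-w}{1-u}$ and $\frac{w-u^*}{1-u^*}=s^2$, as required. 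The degenerate case $u^*=0$ (that is, $w=u^2$) is covered by the same formula, with $C=\Pi$.

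\emph{Second copula.} Symmetrically, take $J=(0,u^{**})$ and let $D_1$ be the ordinal sum of $\{\Pi\}$ with respect to $\{J\}$, so that $D=\Psi(D_1)=D_1$. For $a\ge u^{**}$ we get $D(a,a)=a$, and since $u\le u^{**}$,
$$D(u,u)=u^{**}\Big(\frac{u}{u^{**}}\Big)^2=\frac{u^2}{u^{**}}=w.$$
If $u^{**}=1$, then $w=u^2$ and $D=\Pi$ also works.

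Alternatively, the second copula could be obtained from the first by the survival transformation via Lemma~\ref{lem:psi_hat}, using \eqref{eq:1-u*}. The direct construction is simpler.

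The only real obstacle is the algebraic identity $(w-u^*)/(1-u^*)=s^2$. Once $1-u^*$, $u-u^*$ and $w-u^*$ are written over the common denominator $w+1-2u$, it reduces to a perfect square.
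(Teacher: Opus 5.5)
Your proof is correct, and it takes a genuinely more direct route than the paper's.

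The paper starts from the extremal checkerboard copula of Proposition~\ref{prop:real_zgornja1} at the endpoint $a=u^*$, on the mesh $\{0,u^*,u,1\}$, whose mass matrix is block diagonal. It then permutes the rows of $G_{A_1}$ and uses that the zeros in $G_A=G_{A_1}^TD_{\delta_1}G_{A_1}$ force nonnegative columns to have disjoint supports. This splits $A_1$ as an ordinal sum of $A_4$ and $A_5$. The paper then replaces $A_4$ by $M$ and applies Proposition~\ref{prop:ordinal}, and it treats $D$ only by ``similarly''.

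You bypass all of this by writing the witness in closed form: an ordinal sum of $\Pi$ on a single interval, which is a fixed point of $\Psi$. The whole proof then reduces to the identity $(w-u^*)/(1-u^*)=s^2$, which you verify correctly.

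In fact both routes produce the same copula $C$. The lower-right block of the paper's $M_A$ has entries $w-u^*$, $u-w$, $u-w$, $1+w-2u$. Dividing by the cell areas, using your formulas for $u-u^*$ and $1-u^*$, gives the constant density $1/(1-u^*)$. So the paper's $A$ is a rescaled $\Pi$ on $(u^*,1)^2$.

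The paper's route ties the construction to the sharpness argument for $f$: the copula attaining $f(u^*)=u^*$ is upgraded to attain the trivial bound on all of $[0,u^*]$. Yours gives an explicit witness with $C_1=C$ and handles $D$ directly. It also covers the boundary cases $w=u$ and $w=u^2$ uniformly. In those cases the paper's appeal to Proposition~\ref{prop:real_zgornja1} does not literally apply, since that proposition needs $a\in[u^*,u)$ and a nondegenerate mesh.
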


\begin{proof} By Proposition \ref{prop:real_zgornja1} with $a=u^*$, there exists a checkerboard copula $A$ with splitting points $\uD = \delta_2 \times \delta_2$, where $\delta_2 = \{0, u^*, u, 1\} $, such that $A = \Psi(A_1)$ for some checkerboard copula $A_1$, $A(u^*,u^*) = f(u^*) = u^*$ and $A(u,u) = w$. Note that the value matrix of $A$ is 
$$S_A = \begin{bmatrix}
    u^* & u^* & u^* \\
    u^* & w  & u \\
    u^* & u & 1 
\end{bmatrix},$$
so $A(x, u^*) = A(u^*,x) = x$ for any $x \in [0, u^*]$.
We have
$$M_A=J_3^T S_A J_3=\begin{bmatrix}
    u^* & 0 & 0 \\
    0 & w-u^*  & u-w \\
    0 & u-w & 1+w-2u 
\end{bmatrix},$$
so that $G_A$ is of the form
$$G_A=\begin{bmatrix}
    g_{11} & 0 & 0 \\
    0 & g_{22}  & g_{23} \\
    0 & g_{32} & g_{33} 
\end{bmatrix}=G_{A_1}^TD_{\delta_1}G_{A_1}$$
by equation~\eqref{eq:G_psi}, implying that $A$ is an ordinal sum of two copulas with respect to $\{(0,u^*),(u^*,1)\}$.
Let $P$ be a permutation matrix such that the first column of $PG_{A_1}$ has all the zero elements collected at the end of the column, say, $(PG_{A_1})_{i1}$ is $0$ for $i>k$ and positive for $i \le k$.
Note that $PD_{\delta_1}P^T$ is again a diagonal matrix, say $PD_{\delta_1}P^T=D_{\delta_3}$, and $PG_{A_1}=G_{A_3}$ is again a density matrix of some checkerboard copula $A_3$ with splitting points $\delta_3 \times \delta_2$. Furthermore, $G_A=G_{A_3}^TD_{\delta_3}G_{A_3}$, so that $A=\Psi(A_3)$ by equation~\eqref{eq:G_psi}.
The zeros in $G_A$ imply that the first column of $G_{A_3}$ is orthogonal to the other two columns. Since the matrix entries are nonnegative, this implies $(G_{A_3})_{i2}=(G_{A_3})_{i3}=0$ for all $i\le k$. Consequently, $G_{A_3}$ and also $M_{A_3}$ is of the form
$$M_{A_3}=\begin{bmatrix}
    v_1 & 0 & 0 \\
    0 & v_2  & v_3 
\end{bmatrix},$$
for some vectors $v_1$, $v_2$, and $v_3$, where $v_1 \in \RR^k$. Note that the first column of $M_{A_3}$ corresponds to the interval $(0,u^*)$ and consequently, the first $k$ rows of $M_{A_3}$ also correspond to the interval $(0,u^*)$ since the sum of the components of $v_1$ equals $u^*$. This implies that $A_3$ is an ordinal sum of two checkerboard copulas, say $A_4$ and $A_5$, with respect to $\{(0,u^*),(u^*,1)\}$.
Now let $C_1$ be the ordinal sum of the Fr\'{e}chet-Hoeffding bound $M$ and $A_5$ with respect to $\{(0,u^*),(u^*,1)\}$.
By Proposition~\ref{prop:ordinal}, copula $A$ is an ordinal sum of $\Psi(A_4)$ and $\Psi(A_5)$ with respect to $\{(0,u^*),(u^*,1)\}$, while copula $C=\Psi(C_1)$ is an ordinal sum of $\Psi(M)$ and $\Psi(A_5)$ with respect to $\{(0,u^*),(u^*,1)\}$.
This implies that 
$$ C(x,y) = \begin{cases}
    \min\{x,y\}; &\text{if } x\le u^*, y\le u^*, \\
    A(x,y); &\text{otherwise. }
\end{cases}$$
Furthermore, $C(a,a) = a$ for every $a \in [0, u^*]$ and $C(u,u) = w$.

Similarly, by Proposition \ref{prop:real_zgornja2} with $a=u^{**}$, there there exists a checkerboard copula $B$ with splitting points $\uD = \delta_2' \times \delta_2'$, where $\delta_2' = \{0, u, u^{**}, 1\} $, such that $B = \Psi(B_1)$ for some checkerboard copula $B_1$, $B(u^{**},u^{**}) = g(u^{**}) = u^{**}$ and $B(u,u) = w$. We have  $B(x, u^{**}) = B(u^{**},x) = u^{**}$ for any $x \in [u^{**}, 1]$, implying that $B$ is an ordinal sum of two copulas with respect to $\{(0,u^{**}),(u^{**},1)\}$. If we let 
$$ D(x,y) = \begin{cases}
    \min\{x,y\}; &\text{if } x\ge u^{**}, y\ge u^{**}, \\
    B(x,y); &\text{otherwise, }
\end{cases}$$
then $D(a,a) = a$ for every $a \in [u^{**}, 1]$, $D(u,u) = w$, and, similarly as above, $D = \Psi(D_1)$ for some copula $D_1$.
\end{proof}

We collect our findings in our main theorem of this section.

\begin{theorem}
    Let $C$ be a copula such that $C = \Psi(C_1)$ for some copula $C_1$ and $u \in (0,1)$. Let $w = C(u,u) \in [u^2, u]$ be the value of $C$ at a diagonal point, let $u^* = \tfrac{w-u^2}{w+1-2u} \in [0,u]$ and $u^{**} = \tfrac{u^2}{w} \in [u,1]$. Then the upper bound for the diagonal section of $C$ is 
    $$C(a,a) \le \begin{cases}
			f(a); & \text{if }u^* \le a \le u, \\
			g(a); & \text{if }u \le a \le u^{**}, \\
			a; & \text{otherwise},
    \end{cases}$$
    where functions $f$ and $g$ are defined in equations \eqref{eq:f} and \eqref{eq:g}.
    Every point on the upper bound is attained by some copula $C$ of the form $C = \Psi(C_1)$ for some copula $C_1$ and $C(u,u) = w$.
\end{theorem}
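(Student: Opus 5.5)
The theorem collects Propositions~\ref{prop:zgornja1}--\ref{prop:real_zgornja3}, so the proof has two halves: validity of the bound, and attainment.

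\emph{Validity.} We split into the three ranges of $a$. For $a\in[u^*,u]$ the inequality $C(a,a)\le f(a)$ is Proposition~\ref{prop:zgornja1}; that proposition in fact holds for every $a\in[0,u]$, but only the subinterval is needed. For $a\in[u,u^{**}]$ the inequality $C(a,a)\le g(a)$ is Proposition~\ref{prop:zgornja2}. For $a\notin[u^*,u^{**}]$ we use the general Fr\'echet--Hoeffding bound $C(a,a)\le M(a,a)=a$, valid for any copula. Two degenerate situations need a brief check: $w=u^2$, where $u^*=0$ and $u^{**}=1$, and $w=u$, where $u^*=u=u^{**}$. In both cases the formulas remain consistent with Lemmas~\ref{function_f} and~\ref{function_g}, and at the endpoints we have $f(u)=g(u)=w$, $f(u^*)=u^*$ and $g(u^{**})=u^{**}$. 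Hence the piecewise bound is continuous and well defined.

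\emph{Attainment.} For $a\in[u^*,u)$, Proposition~\ref{prop:real_zgornja1} provides a checkerboard copula $A=\Psi(A_1)$ with $A(u,u)=w$ and $A(a,a)=f(a)$. For $a=u$ the value is forced to be $w$, and any admissible copula attains it; one exists, for example the copula of Proposition~\ref{prop:real_zgornja1} with $a=u^*$ when $u^*<u$. If $u^*=u$, then $w=u$, and the ordinal sum of two copies of $M$ works; indeed $M$ itself, since $\Psi(M)=M$. For $a\in(u,u^{**}]$, Proposition~\ref{prop:real_zgornja2} provides a copula with $A(a,a)=g(a)$. For $a\in[0,u^*]$ and $a\in[u^{**},1]$, Proposition~\ref{prop:real_zgornja3} provides copulas $C$ and $D$ in the range of $\Psi$ with $C(u,u)=D(u,u)=w$ and diagonal equal to $a$ on the respective intervals.

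The main obstacle is only bookkeeping at the interval endpoints and in degenerate parameter values. The chief case is $w=u^2$, where the quantity $d=C(a,a)$ used in Proposition~\ref{prop:zgornja1} may equal $a^2$. That proposition already treats this case separately. For $w=u$ and $a=u$ we note explicitly that $M$ is admissible.
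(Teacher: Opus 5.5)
Your proposal is correct and is essentially the paper's proof. Validity comes from Propositions~\ref{prop:zgornja1} and~\ref{prop:zgornja2} together with the Fr\'echet--Hoeffding bound $C(a,a)\le a$, and attainment comes from Propositions~\ref{prop:real_zgornja1}, \ref{prop:real_zgornja2} and~\ref{prop:real_zgornja3}; your extra endpoint bookkeeping is a harmless refinement that the paper leaves implicit, namely the point $a=u$ (not explicitly covered by Propositions~\ref{prop:real_zgornja1} or~\ref{prop:real_zgornja2}) and the degenerate case $w=u$, where $M=\Psi(M)$ in fact attains the bound at every $a$.
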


\begin{proof}
We have $C(a,a) \le f(a)$ for any $a \in [0,u]$ by Proposition \ref{prop:zgornja1}, $C(a,a) \le g(a)$ for any $a \in [u,1]$ by Proposition \ref{prop:zgornja2}, and $C(a,a) \le a$ for any $a \in [0,1]$. Every point on the bound is attained by Propositions \ref{prop:real_zgornja1}, \ref{prop:real_zgornja2}, and \ref{prop:real_zgornja3}.
\end{proof}

We conclude the section with an example.

\begin{example}\label{ex:upper}
 Let $u = \tfrac25$ and $w=\tfrac15\in[\tfrac{4}{25},\tfrac25]$. Then $u^* = \tfrac{w-u^2}{w+1-2u} = \tfrac{1}{10}$ and $u^{**} = \tfrac{u^2}{w} = \tfrac45$. The upper bound for the diagonal section of any copula $C$ of the form $C = \Psi(C_1)$ for some copula $C_1$, satisfying the condition $C(u,u) = w$ is
    $$C(a,a) \le \begin{cases}
			\tfrac25 a + \tfrac{1}{50} + \tfrac{1}{50} \sqrt{-100a^2+40a+1}; & \text{if }\tfrac{1}{10} \le a \le \tfrac25, \\
			\tfrac75 a - \tfrac{19}{50} + \tfrac{1}{50} \sqrt{-100a^2+140a-39}; & \text{if }\tfrac25 \le a \le \tfrac45, \\
			a; & \text{otherwise}. 
    \end{cases}$$
The checkerboard copula $A$ of the form $A = \Psi(A_1)$ for some copula $A_1$ that attains the upper bound for $a_1=\tfrac15$ has splitting points $\uD = \delta_2 \times \delta_2$, where $\delta_2 = \{0, \tfrac15, \tfrac25, 1\}$ and the density matrix
$$G_A = \begin{bmatrix}
    3.618 & 0 & 0.461 \\
    0 & 1.382 & 1.206 \\
    0.461 & 1.206 & 1.111 
\end{bmatrix},$$
while the checkerboard copula $B$ of the form $B = \Psi(B_1)$ for some copula $B_1$ that attains the upper bound for $a_2=\tfrac35$ has splitting points $\uD' = \delta'_2 \times \delta'_2$, where $\delta'_2 = \{0, \tfrac25, \tfrac35, 1\}$ and the density matrix
$$G_B = \begin{bmatrix}
    \frac54 & \frac32 & \frac12 \\[1mm]
    \frac32 & 2 & 0 \\[1mm]
    \frac12 & 0 & 2
\end{bmatrix}.$$
Figure \ref{fig:upper} shows the graph of the upper bound together with diagonal sections of copulas $M$ and $\Pi$, diagonal sections of copulas $A$ and $B$, and their densities as heatmaps, where darker shades represent higher densities. 
\end{example}

\begin{figure}[ht]
\centering
\def\u{2/5}
\def\w{1/5}
\def\uu{1/10}
\def\uuu{4/5}
\def\a{1/5}
\def\b{3/5}
\begin{tikzpicture}[scale=4]
\draw[->] (-0.05,0) -- (1.1,0) node[right] {$a$};
\draw[->] (0,-0.05) -- (0,1.1);
\draw (\u,0.01) -- (\u,-0.01) node[below]{$u$};
\draw (\uu,0.01) -- (\uu,-0.01) node[below]{$u^*$};
\draw (\uuu,0.01) -- (\uuu,-0.01) node[below]{$u^{**}$};
\draw (1,0.01) -- (1,-0.01) node[below]{$1$};
\draw (0.01,\w) -- (-0.01,\w) node[left]{$w$};
\draw (0.01,\uu) -- (-0.01,\uu) node[left]{$u^*$};
\draw (0.01,\uuu) -- (-0.01,\uuu) node[left]{$u^{**}$};
\draw (0.01,1) -- (-0.01,1) node[left]{$1$};
\draw[-] (0,0) -- (1,1);
\draw[domain=0:1,samples=100] plot (\x,\x*\x);
\draw[black, very thick,domain=0:\uu,samples=100] plot (\x,\x);
\draw[black, very thick,domain=\uuu:1,samples=100] plot (\x,\x);
\draw[black, very thick,domain=\uu:\u,samples=100] plot (\x,{2/5*\x + 1/50 + 1/50*sqrt(-100*\x*\x+40*\x+1)});
\draw[black, very thick,domain=\u:\uuu,samples=100] plot (\x,{7/5*\x - 19/50 + 1/50*sqrt(-100*\x*\x+140*\x-39)});
\fill (\u,\w) circle (0.01) (\uu,\uu) circle (0.01) (\uuu,\uuu) circle (0.01);
\end{tikzpicture} 
\hspace{1cm}
\begin{tikzpicture}[scale=4]
\draw[->] (-0.05,0) -- (1.1,0) node[right] {$a$};
\draw[->] (0,-0.05) -- (0,1.1);
\draw (\u,0.01) -- (\u,-0.01) node[below]{$u$};
\draw (\uu,0.01) -- (\uu,-0.01) node[below]{$u^*$};
\draw (\uuu,0.01) -- (\uuu,-0.01) node[below]{$u^{**}$};
\draw (1,0.01) -- (1,-0.01) node[below]{$1$};
\draw (0.01,\w) -- (-0.01,\w) node[left]{$w$};
\draw (0.01,\uu) -- (-0.01,\uu) node[left]{$u^*$};
\draw (0.01,\uuu) -- (-0.01,\uuu) node[left]{$u^{**}$};
\draw (0.01,1) -- (-0.01,1) node[left]{$1$};
\draw[-] (0,0) -- (1,1);
\draw[domain=0:1,samples=100] plot (\x,\x*\x);
\draw[domain=0:\uu,samples=100] plot (\x,\x);
\draw[domain=\uuu:1,samples=100] plot (\x,\x);
\draw[domain=\uu:\u,samples=100] plot (\x,{2/5*\x + 1/50 + 1/50*sqrt(-100*\x*\x+40*\x+1)});
\draw[domain=\u:\uuu,samples=100] plot (\x,{7/5*\x - 19/50 + 1/50*sqrt(-100*\x*\x+140*\x-39)});
\draw[blue, very thick, domain=0:\a,samples=100] plot (\x,3.618*\x*\x);
\draw[blue, very thick, domain=\a:\u,samples=100] plot (\x,1.382*\x*\x-0.553*\x+0.2);
\draw[blue, very thick, domain=\u:1,samples=100] plot (\x,10/9*\x*\x-2/9*\x+1/9);
\fill[blue] (\a,3.618*\a*\a) circle (0.01);
\fill[blue] (\u,\w) circle (0.01);
\draw[red, very thick, domain=0:\u,samples=100] plot (\x,5/4*\x*\x);
\draw[red, very thick, domain=\u:\b,samples=100] plot (\x,2*\x*\x-2/5*\x+1/25);
\draw[red, very thick, domain=\b:1,samples=100] plot (\x,2*\x*\x-2*\x+1);
\fill[red] (\b,2*\b*\b-2*\b+1) circle (0.01);
\fill[red] (\u,\w) circle (0.01);
\end{tikzpicture}
\\[5mm]
\begin{tikzpicture}[scale=4]
\filldraw[black!56] (0,0) -- (0,\a) -- (\a,\a) -- (\a,0) -- cycle;
\filldraw[black!24] (\a,\a) -- (\u,\a) -- (\u,\u) -- (\a,\u) -- cycle;
\filldraw[black!7] (0,\u) -- (0,1) -- (\a,1) -- (\a,\u) -- cycle;
\filldraw[black!7] (\u,0) -- (1,0) -- (1,\a) -- (\u,\a) -- cycle;
\filldraw[black!19] (\a,\u) -- (\a,1) -- (\u,1) -- (\u,\u) -- cycle;
\filldraw[black!19] (\u,\a) -- (1,\a) -- (1,\u) -- (\u,\u) -- cycle;
\filldraw[black!17] (\u,\u) -- (1,\u) -- (1,1) -- (\u,1) -- cycle;
\draw[thick] (0,0) -- (1,0) -- (1,1) -- (0,1) -- cycle;
\draw (\a,1) -- (\a,0) node[below]{$a_1$};
\draw (\u,1) -- (\u,0) node[below]{$u$};
\draw (1,1) -- (1,0) node[below]{$1$};
\draw (1,\a) -- (0,\a) node[left]{$a_1$};
\draw (1,\u) -- (0,\u) node[left]{$u$};
\draw (1,1) -- (0,1) node[left]{$1$};
\end{tikzpicture}
\hspace{2cm}
\begin{tikzpicture}[scale=4]
\filldraw[black!20] (0,0) -- (0,\u) -- (\u,\u) -- (\u,0) -- cycle;
\filldraw[black!24] (0,\u) -- (0,\b) -- (\u,\b) -- (\u,\u) -- cycle;
\filldraw[black!24] (\u,0) -- (\b,0) -- (\b,\u) -- (\u,\u) -- cycle;
\filldraw[black!32] (\u,\u) -- (\u,\b) -- (\b,\b) -- (\b,\u) -- cycle;
\filldraw[black!32] (1,1) -- (1,\b) -- (\b,\b) -- (\b,1) -- cycle;
\filldraw[black!8] (\u,\b) -- (\u,1) -- (0,1) -- (0,\b) -- cycle;
\filldraw[black!8] (\b,\u) -- (1,\u) -- (1,0) -- (\b,0) -- cycle;
\draw[thick] (0,0) -- (1,0) -- (1,1) -- (0,1) -- cycle;
\draw (\u,1) -- (\u,0) node[below]{$u$};
\draw (\b,1) -- (\b,0) node[below]{$a_2$};
\draw (1,1) -- (1,0) node[below]{$1$};
\draw (1,\u) -- (0,\u) node[left]{$u$};
\draw (1,\b) -- (0,\b) node[left]{$a_2$};
\draw (1,1) -- (0,1) node[left]{$1$};
\end{tikzpicture}
    \caption{The graph of the upper bound together with diagonal sections of copulas $M$ and $\Pi$ (top left), diagonal sections of copulas $A$ in blue and $B$ in red (top right) from Example \ref{ex:upper}, and densities of copulas $A$ (bottom left) and $B$ (bottom right) as heatmaps.}
    \label{fig:upper}
\end{figure}

\section{Concluding remarks} \label{sec:concluding}

We studied the copulas in the range of the transformation $\Psi(C)=C^T*C$ on the class of bivariate copulas. We focused on diagonal sections of copulas in the range and on checkerboard copulas. We determined the exact lower and upper bound for the diagonal section of copula $\Psi(C)$ under the condition that its value is known in a single point. Both bounds are piecewise functions.
The pieces of the lower bound are quadratic functions, while the crucial pieces of the upper bound are square-root functions. We constructed copulas that attain the bounds and observed drastically different behaviour for the two bounds; the whole piece of the lower bound is attained by a single copula, while the crucial pieces of the upper bound are only attained point-by-point.

We remark that our result cannot be directly extended to the case when the values of the diagonal section of $\Psi(C)$ are known at two or more points. Namely, suppose that $0<a<b<1$ and the value of the diagonal section of $\Psi(C)$ is known at the points $a$ and $b$ (and these values respect all the bounds obtained in our results).
Let $\varphi_a(u)$ be the lower bound for the diagonal section given the value at $a$ given by Theorem~\ref{thm:main_lower}, and similarly, let $\varphi_b(u)$ be the lower bound given the value at $b$. Then the lower bound for the diagonal section of $\Psi(C)$ given the values at both $a$ and $b$ is not equal to $\max\{\varphi_a(u),\varphi_b(u)\}$.
One of the reasons for this is the fact that a $4 \times 4$ matrix whose two $3 \times 3$ submatrices are positive semidefinite need not be positive semidefinite itself.

In the future it would be interesting to study the range of $\Psi$ in more detail from statistical and topological point of view. Also, the two open implications in Figure~\ref{fig:results} could be investigated.

\section*{Acknowledgments}

The authors acknowledge financial support from the ARIS (Slovenian Research and Innovation Agency), research core funding No. P1-0222 (Roman Drnovšek, Damjana Kokol Bukov\v{s}ek, Nik Stopar), and project J1-70034 (Damjana Kokol Bukov\v{s}ek, Nik Stopar).

\section*{Declaration of generative AI and AI-assisted technologies}

No AI tools were used during the preparation of this manuscript.

\bibliographystyle{amsplain}
\bibliography{biblio}

@article {DarNguOls92,
    AUTHOR = {Darsow, W.F. and Nguyen, B. and Olsen, E.T.},
     TITLE = {Copulas and {M}arkov processes},
   JOURNAL = {Illinois J. Math.},
  FJOURNAL = {Illinois Journal of Mathematics},
    VOLUME = {36},
      YEAR = {1992},
    NUMBER = {4},
     PAGES = {600--642},
      ISSN = {0019-2082,1945-6581},
   MRCLASS = {60J99 (46N30)},
  MRNUMBER = {1215798},
MRREVIEWER = {B.\ Schweizer},
       URL = {http://projecteuclid.org/euclid.ijm/1255987328},
}

@book {Nel06,
    AUTHOR = {Nelsen, R.B.},
     TITLE = {An introduction to copulas},
    SERIES = {Springer Series in Statistics},
   EDITION = {Second},
 PUBLISHER = {Springer, New York},
      YEAR = {2006},
     PAGES = {xiv+269},
      ISBN = {978-0387-28659-4; 0-387-28659-4},
   MRCLASS = {60-02 (60E05 60E15 62H05)},
  MRNUMBER = {2197664},
MRREVIEWER = {Radu\ Theodorescu},
       DOI = {10.1007/s11229-005-3715-x},
       URL = {https://doi.org/10.1007/s11229-005-3715-x},
}

@incollection {FrNe97,
    AUTHOR = {Fredricks, G.A. and Nelsen, R.B.},
     TITLE = {Copulas constructed from diagonal sections},
 BOOKTITLE = {Distributions with given marginals and moment problems
              ({P}rague, 1996)},
     PAGES = {129--136},
 PUBLISHER = {Kluwer Acad. Publ., Dordrecht},
      YEAR = {1997},
      ISBN = {0-7923-4573-8},
   MRCLASS = {60E05 (62H05)},
  MRNUMBER = {1614666},
}

@article {Skl59,
    AUTHOR = {Sklar, A.},
     TITLE = {Fonctions de r\'epartition \`a{} {$n$} dimensions et leurs
              marges},
   JOURNAL = {Publ. Inst. Statist. Univ. Paris},
  FJOURNAL = {Publications de l'Institut de Statistique de l'Universit\'e{}
              de Paris},
    VOLUME = {8},
      YEAR = {1959},
     PAGES = {229--231},
   MRCLASS = {60.20},
  MRNUMBER = {125600},
MRREVIEWER = {M.\ Lo\`eve},
}

@article {LinWanZhaZha25,
    AUTHOR = {Lin, L. and Wang, R. and Zhang, R. and Zhao,
              C.},
     TITLE = {The checkerboard copula and dependence concepts},
   JOURNAL = {SIAM J. Financial Math.},
  FJOURNAL = {SIAM Journal on Financial Mathematics},
    VOLUME = {16},
      YEAR = {2025},
    NUMBER = {2},
     PAGES = {426--447},
      ISSN = {1945-497X},
   MRCLASS = {62H05 (91B05 91G70)},
  MRNUMBER = {4897917},
MRREVIEWER = {Pranesh\ Kumar},
       DOI = {10.1137/24M1656645},
       URL = {https://doi.org/10.1137/24M1656645},
}

@article {MikTay10,
    AUTHOR = {Mikusi\'nski, P. and Taylor, M.D.},
     TITLE = {Some approximations of {$n$}-copulas},
   JOURNAL = {Metrika},
  FJOURNAL = {Metrika. International Journal for Theoretical and Applied
              Statistics},
    VOLUME = {72},
      YEAR = {2010},
    NUMBER = {3},
     PAGES = {385--414},
      ISSN = {0026-1335,1435-926X},
   MRCLASS = {62H05 (46N30 47B38 60B10 60E05 62E17)},
  MRNUMBER = {2746583},
MRREVIEWER = {Roger\ B.\ Nelsen},
       DOI = {10.1007/s00184-009-0259-y},
       URL = {https://doi.org/10.1007/s00184-009-0259-y},
}

@article {JuGrTr21,
    AUTHOR = {Junker, R.R. and Griessenberger, F. and Trutschnig,
              W.},
     TITLE = {Estimating scale-invariant directed dependence of bivariate
              distributions},
   JOURNAL = {Comput. Statist. Data Anal.},
  FJOURNAL = {Computational Statistics \& Data Analysis},
    VOLUME = {153},
      YEAR = {2021},
     PAGES = {Paper No. 107058, 22},
      ISSN = {0167-9473,1872-7352},
   MRCLASS = {62H05 (62G05)},
  MRNUMBER = {4141460},
MRREVIEWER = {Gilles\ Stupfler},
       DOI = {10.1016/j.csda.2020.107058},
       URL = {https://doi.org/10.1016/j.csda.2020.107058},
}

@article {KoPe25,
    AUTHOR = {Ko\v{s}ir, T. and Perrone, E.},
     TITLE = {Discrete imprecise copulas},
   JOURNAL = {Fuzzy Sets and Systems},
  FJOURNAL = {Fuzzy Sets and Systems. An International Journal in
              Information Science and Engineering},
    VOLUME = {504},
      YEAR = {2025},
     PAGES = {Paper No. 109251, 20},
      ISSN = {0165-0114,1872-6801},
   MRCLASS = {99-01},
  MRNUMBER = {4843882},
       DOI = {10.1016/j.fss.2024.109251},
       URL = {https://doi.org/10.1016/j.fss.2024.109251},
}

@article {Fu24,
    AUTHOR = {Fuchs, S.},
     TITLE = {Quantifying directed dependence via dimension reduction},
   JOURNAL = {J. Multivariate Anal.},
  FJOURNAL = {Journal of Multivariate Analysis},
    VOLUME = {201},
      YEAR = {2024},
     PAGES = {Paper No. 105266, 21},
      ISSN = {0047-259X,1095-7243},
   MRCLASS = {62H20 (62E10)},
  MRNUMBER = {4714790},
       DOI = {10.1016/j.jmva.2023.105266},
       URL = {https://doi.org/10.1016/j.jmva.2023.105266},
}

@book {BeShMo03,
    AUTHOR = {Berman, A. and Shaked-Monderer, N.},
     TITLE = {Completely positive matrices},
 PUBLISHER = {World Scientific Publishing Co., Inc., River Edge, NJ},
      YEAR = {2003},
     PAGES = {x+206},
      ISBN = {981-238-368-9},
   MRCLASS = {15A48 (15-02)},
  MRNUMBER = {1986666},
MRREVIEWER = {Aurelian Gheondea},
       DOI = {10.1142/9789812795212},
       URL = {https://doi.org/10.1142/9789812795212},
}

@article {Ch21,
    AUTHOR = {Chatterjee, S.},
     TITLE = {A new coefficient of correlation},
   JOURNAL = {J. Amer. Statist. Assoc.},
  FJOURNAL = {Journal of the American Statistical Association},
    VOLUME = {116},
      YEAR = {2021},
    NUMBER = {536},
     PAGES = {2009--2022},
      ISSN = {0162-1459,1537-274X},
   MRCLASS = {99-01},
  MRNUMBER = {4353729},
       DOI = {10.1080/01621459.2020.1758115},
       URL = {https://doi.org/10.1080/01621459.2020.1758115},
}

@article {AzCh21,
    AUTHOR = {Azadkia, M. and Chatterjee, S.},
     TITLE = {A simple measure of conditional dependence},
   JOURNAL = {Ann. Statist.},
  FJOURNAL = {The Annals of Statistics},
    VOLUME = {49},
      YEAR = {2021},
    NUMBER = {6},
     PAGES = {3070--3102},
      ISSN = {0090-5364,2168-8966},
   MRCLASS = {62G05 (62H20)},
  MRNUMBER = {4352523},
       DOI = {10.1214/21-aos2073},
       URL = {https://doi.org/10.1214/21-aos2073},
}

@article {DeSiSt13,
    AUTHOR = {Dette, H. and Siburg, K.F. and Stoimenov, P.A.},
     TITLE = {A copula-based non-parametric measure of regression
              dependence},
   JOURNAL = {Scand. J. Stat.},
  FJOURNAL = {Scandinavian Journal of Statistics. Theory and Applications},
    VOLUME = {40},
      YEAR = {2013},
    NUMBER = {1},
     PAGES = {21--41},
      ISSN = {0303-6898,1467-9469},
   MRCLASS = {62H20 (60E15 62G08 62G20)},
  MRNUMBER = {3024030},
       DOI = {10.1111/j.1467-9469.2011.00767.x},
       URL = {https://doi.org/10.1111/j.1467-9469.2011.00767.x},
}

@article {FuLi26,
    AUTHOR = {Fuchs, S. and Limbach, C.},
     TITLE = {A dimension reduction for extreme types of directed
              dependence},
   JOURNAL = {Depend. Model.},
  FJOURNAL = {Dependence Modeling},
    VOLUME = {14},
      YEAR = {2026},
    NUMBER = {1},
     PAGES = {Paper No. 20250016, 14},
      ISSN = {2300-2298},
   MRCLASS = {62H20 (62E10)},
  MRNUMBER = {5025019},
       DOI = {10.1515/demo-2025-0016},
       URL = {https://doi.org/10.1515/demo-2025-0016},
}

@article {AnRo24,
    AUTHOR = {Ansari, J. and Rockel, M.},
     TITLE = {Dependence properties of bivariate copula families},
   JOURNAL = {Depend. Model.},
  FJOURNAL = {Dependence Modeling},
    VOLUME = {12},
      YEAR = {2024},
    NUMBER = {1},
     PAGES = {Paper No. 20240002, 36},
      ISSN = {2300-2298},
   MRCLASS = {62H05 (60E15 62H20)},
  MRNUMBER = {4778718},
       DOI = {10.1515/demo-2024-0002},
       URL = {https://doi.org/10.1515/demo-2024-0002},
}

@article {JaHuMi17,
    AUTHOR = {Jaser, M. and Haug, S. and Min, A.},
     TITLE = {A simple non-parametric goodness-of-fit test for elliptical
              copulas},
   JOURNAL = {Depend. Model.},
  FJOURNAL = {Dependence Modeling},
    VOLUME = {5},
      YEAR = {2017},
    NUMBER = {1},
     PAGES = {330--353},
      ISSN = {2300-2298},
   MRCLASS = {62G10 (62H20)},
  MRNUMBER = {3745440},
MRREVIEWER = {Jean-Fran\c cois\ Quessy},
       DOI = {10.1515/demo-2017-0020},
       URL = {https://doi.org/10.1515/demo-2017-0020},
}

@article {JoLiNi10,
    AUTHOR = {Joe, H. and Li, H. and Nikoloulopoulos, A.K.},
     TITLE = {Tail dependence functions and vine copulas},
   JOURNAL = {J. Multivariate Anal.},
  FJOURNAL = {Journal of Multivariate Analysis},
    VOLUME = {101},
      YEAR = {2010},
    NUMBER = {1},
     PAGES = {252--270},
      ISSN = {0047-259X,1095-7243},
   MRCLASS = {62H05 (60G70 62H20)},
  MRNUMBER = {2557632},
MRREVIEWER = {Fabrizio\ Durante},
       DOI = {10.1016/j.jmva.2009.08.002},
       URL = {https://doi.org/10.1016/j.jmva.2009.08.002},
}

@article {GuSe12,
    AUTHOR = {Gudendorf, G. and Segers, J.},
     TITLE = {Nonparametric estimation of multivariate extreme-value
              copulas},
   JOURNAL = {J. Statist. Plann. Inference},
  FJOURNAL = {Journal of Statistical Planning and Inference},
    VOLUME = {142},
      YEAR = {2012},
    NUMBER = {12},
     PAGES = {3073--3085},
      ISSN = {0378-3758,1873-1171},
   MRCLASS = {62H12 (62G05 62G32 62H05)},
  MRNUMBER = {2956794},
       DOI = {10.1016/j.jspi.2012.05.007},
       URL = {https://doi.org/10.1016/j.jspi.2012.05.007},
}

@article {GeNeZi11,
    AUTHOR = {Genest, C. and Ne\v{s}lehov\'{a}, J. and Ziegel,
              J.},
     TITLE = {Inference in multivariate {A}rchimedean copula models},
   JOURNAL = {TEST},
  FJOURNAL = {TEST. An Official Journal of the Spanish Society of Statistics
              and Operations Research},
    VOLUME = {20},
      YEAR = {2011},
    NUMBER = {2},
     PAGES = {223--256},
      ISSN = {1133-0686,1863-8260},
   MRCLASS = {62G05 (62G20 62H12 62H20)},
  MRNUMBER = {2834039},
MRREVIEWER = {Xiaohu\ Li},
       DOI = {10.1007/s11749-011-0250-6},
       URL = {https://doi.org/10.1007/s11749-011-0250-6},
}

@article {ScPaTr17,
    AUTHOR = {Schreyer, M. and Paulin, R. and Trutschnig, W.},
     TITLE = {On the exact region determined by {K}endall's {$\tau$} and
              {S}pearman's {$\rho$}},
   JOURNAL = {J. R. Stat. Soc. Ser. B. Stat. Methodol.},
  FJOURNAL = {Journal of the Royal Statistical Society. Series B.
              Statistical Methodology},
    VOLUME = {79},
      YEAR = {2017},
    NUMBER = {2},
     PAGES = {613--633},
      ISSN = {1369-7412,1467-9868},
   MRCLASS = {62H20 (60C05)},
  MRNUMBER = {3611762},
MRREVIEWER = {Carlo\ E.\ Sempi},
       DOI = {10.1111/rssb.12181},
       URL = {https://doi.org/10.1111/rssb.12181},
}

@article {GeNeGh10,
    AUTHOR = {Genest, C. and Ne\v{s}lehov\'{a}, J. and Ben Ghorbal,
              N.},
     TITLE = {Spearman's footrule and {G}ini's gamma: a review with
              complements},
   JOURNAL = {J. Nonparametr. Stat.},
  FJOURNAL = {Journal of Nonparametric Statistics},
    VOLUME = {22},
      YEAR = {2010},
    NUMBER = {8},
     PAGES = {937--954},
      ISSN = {1048-5252},
   MRCLASS = {62-02 (62G10 62G20 62G30 62H20)},
  MRNUMBER = {2738876},
       DOI = {10.1080/10485250903499667},
       URL = {https://doi.org/10.1080/10485250903499667},
}

@article {Wy15,
    AUTHOR = {Wysocki, W.},
     TITLE = {{K}endall's tau and {S}pearman's rho for {$n$}-dimensional
              {A}rchimedean copulas and their asymptotic properties},
   JOURNAL = {J. Nonparametr. Stat.},
  FJOURNAL = {Journal of Nonparametric Statistics},
    VOLUME = {27},
      YEAR = {2015},
    NUMBER = {4},
     PAGES = {442--459},
      ISSN = {1048-5252},
   MRCLASS = {62H20 (62G20 62G30)},
  MRNUMBER = {3406321},
       DOI = {10.1080/10485252.2015.1070849},
       URL = {https://doi.org/10.1080/10485252.2015.1070849},
}

@article {DoUbFl06,
    AUTHOR = {Dolati, A. and \'{U}beda-Flores, M.},
     TITLE = {On measures on multivariate concordance},
   JOURNAL = {JPSS J. Probab. Stat. Sci.},
  FJOURNAL = {JPSS. Journal of Probability and Statistical Science},
    VOLUME = {4},
      YEAR = {2006},
    NUMBER = {2},
     PAGES = {147--163},
      ISSN = {1726-3328},
   MRCLASS = {62H20},
  MRNUMBER = {2488161},
}

@article {UbFl05,
    AUTHOR = {\'{U}beda-Flores, M.},
     TITLE = {Multivariate versions of {B}lomqvist's beta and {S}pearman's footrule},
   JOURNAL = {Ann. Inst. Statist. Math.},
  FJOURNAL = {Ann. Inst. Statist. Math.},
    VOLUME = {57},
      YEAR = {2005},
    NUMBER = {4},
     PAGES = {781--788},
}

@article {FuSc21,
    AUTHOR = {Fuchs, S. and Schmidt, K.D.},
     TITLE = {On order statistics and {K}endall's tau},
   JOURNAL = {Statist. Probab. Lett.},
  FJOURNAL = {Statistics \& Probability Letters},
    VOLUME = {169},
      YEAR = {2021},
     PAGES = {Paper No. 108972, 7},
      ISSN = {0167-7152},
   MRCLASS = {62G30 (62H20)},
  MRNUMBER = {4168327},
       DOI = {10.1016/j.spl.2020.108972},
       URL = {https://doi.org/10.1016/j.spl.2020.108972},
}

@article{Li14,
author = {Liebscher, L.},
doi = {doi:10.2478/demo-2014-0004},
url = {https://doi.org/10.2478/demo-2014-0004},
title = {Copula-based dependence measures},
journal = {Dependence Modeling},
number = {1},
volume = {2},
year = {2014},
pages = {49--64},
lastchecked = {2022-05-26}
}

@article {Sc84,
    AUTHOR = {Scarsini, M.},
     TITLE = {On measures of concordance},
   JOURNAL = {Stochastica},
  FJOURNAL = {Stochastica},
    VOLUME = {8},
      YEAR = {1984},
    NUMBER = {3},
     PAGES = {201--218},
      ISSN = {0210-7821},
   MRCLASS = {62H20 (60E05)},
  MRNUMBER = {796650},
MRREVIEWER = {B. Schweizer},
}

@article{DaEA11,
author = {Reshef, D.N. and Reshef, Y.A. and Finucane, H.K. and Grossman, S.R. and McVean, G. and Turnbaugh, P.J. and Lander, E.S. and Mitzenmacher, M. and Sabeti, P.C.},
title = {Detecting Novel Associations in Large Data Sets},
journal = {Science},
volume = {334},
number = {6062},
pages = {1518--1524},
year = {2011},
doi = {10.1126/science.1205438},
URL = {https://www.science.org/doi/abs/10.1126/science.1205438},
eprint = {https://www.science.org/doi/pdf/10.1126/science.1205438}}

@article {ScWo81,
    AUTHOR = {Schweizer, B. and Wolff, E.F.},
     TITLE = {On nonparametric measures of dependence for random variables},
   JOURNAL = {Ann. Statist.},
  FJOURNAL = {The Annals of Statistics},
    VOLUME = {9},
      YEAR = {1981},
    NUMBER = {4},
     PAGES = {879--885},
      ISSN = {0090-5364,2168-8966},
   MRCLASS = {62H20 (62G99)},
  MRNUMBER = {619291},
MRREVIEWER = {D.\ M.\ D\polhk abrowska},
       URL =
              {http://links.jstor.org/sici?sici=0090-5364(198107)9:4<879:ONMODF>2.0.CO;2-2&origin=MSN},
}

@article {Re59,
    AUTHOR = {R\'{e}nyi, A.},
     TITLE = {On measures of dependence},
   JOURNAL = {Acta Math. Acad. Sci. Hungar.},
  FJOURNAL = {Acta Mathematica. Academiae Scientiarum Hungaricae},
    VOLUME = {10},
      YEAR = {1959},
     PAGES = {441--451 (unbound insert)},
      ISSN = {0001-5954,1588-2632},
   MRCLASS = {60.00 (94.00)},
  MRNUMBER = {115203},
MRREVIEWER = {H.\ P.\ Kramer},
       DOI = {10.1007/BF02024507},
       URL = {https://doi.org/10.1007/BF02024507},
}

@article {LeTa96,
    AUTHOR = {Ledford, A.W. and Tawn, J.A.},
     TITLE = {Statistics for near independence in multivariate extreme
              values},
   JOURNAL = {Biometrika},
  FJOURNAL = {Biometrika},
    VOLUME = {83},
      YEAR = {1996},
    NUMBER = {1},
     PAGES = {169--187},
      ISSN = {0006-3444,1464-3510},
   MRCLASS = {62G20 (60G70 62H99)},
  MRNUMBER = {1399163},
MRREVIEWER = {Margaret\ P.\ Gessaman},
       DOI = {10.1093/biomet/83.1.169},
       URL = {https://doi.org/10.1093/biomet/83.1.169},
}

@article {HuJo14,
    AUTHOR = {Hua, L. and Joe, H.},
     TITLE = {Strength of tail dependence based on conditional tail
              expectation},
   JOURNAL = {J. Multivariate Anal.},
  FJOURNAL = {Journal of Multivariate Analysis},
    VOLUME = {123},
      YEAR = {2014},
     PAGES = {143--159},
      ISSN = {0047-259X,1095-7243},
   MRCLASS = {62H05 (62G32 62H20)},
  MRNUMBER = {3130426},
MRREVIEWER = {Chao\ Huang},
       DOI = {10.1016/j.jmva.2013.09.001},
       URL = {https://doi.org/10.1016/j.jmva.2013.09.001},
}

@article {ScSc07,
    AUTHOR = {Schmid, F. and Schmidt, R.},
     TITLE = {Multivariate conditional versions of {S}pearman's rho and
              related measures of tail dependence},
   JOURNAL = {J. Multivariate Anal.},
  FJOURNAL = {Journal of Multivariate Analysis},
    VOLUME = {98},
      YEAR = {2007},
    NUMBER = {6},
     PAGES = {1123--1140},
      ISSN = {0047-259X,1095-7243},
   MRCLASS = {62H20 (60F05 60G15 60G70 62G20)},
  MRNUMBER = {2326243},
MRREVIEWER = {Winfried\ Stute},
       DOI = {10.1016/j.jmva.2006.05.005},
       URL = {https://doi.org/10.1016/j.jmva.2006.05.005},
}

@article {DuFePa15,
    AUTHOR = {Durante, F. and Fern\'{a}ndez-S\'{a}nchez, J. and
              Pappad\`{a}, R.},
     TITLE = {Copulas, diagonals, and tail dependence},
   JOURNAL = {Fuzzy Sets and Systems},
  FJOURNAL = {Fuzzy Sets and Systems. An International Journal in
              Information Science and Engineering},
    VOLUME = {264},
      YEAR = {2015},
     PAGES = {22--41},
      ISSN = {0165-0114,1872-6801},
   MRCLASS = {62H05 (62-09 62G32)},
  MRNUMBER = {3303661},
       DOI = {10.1016/j.fss.2014.03.014},
       URL = {https://doi.org/10.1016/j.fss.2014.03.014},
}

\end{document}